\title{
  A locally calculable $P^3$-pressure in a decoupled method  for 
  incompressible Stokes equations
}
  \author{
Chunjae Park
\thanks{Department of Mathematics,
Konkuk University, Seoul 05029, Korea.
\hspace{1mm}{\texttt{cjpark@konkuk.ac.kr}}}}
\DeclareMathAlphabet{\mathpzc}{OT1}{pzc}{m}{it}
\begin{document}

\newtheorem{theorem}{Theorem}[section]
\newtheorem{remark}[theorem]{Remark}
\newtheorem{lemma}[theorem]{Lemma}
\newtheorem{corol}[theorem]{Corollary}
\newtheorem{proposition}[theorem]{Proposition}
\newtheorem{definition}[theorem]{Definition}
\newtheorem{assumption}{Assumption}[section] 

\newcommand{\stg}[2]{\mathfrak{s}_{#1#2}}
\newcommand{\nsg}[2]{\mathcal{N}_{#1#2}}

\def\disp{\displaystyle}
\def\pskip{\hspace{1pt}}
\def\mmskip{\vspace{1mm}}

\def\R{\mathbb R}
\def\O{\Omega}
\def\p{\partial}
\def\Th{\mathcal{T}_h}

\def\div{\mathrm{div}\hspace{0.5mm}}
\def\curl{\mathbf{curl}\hspace{0.5mm}}

\def\alp{\alpha}
\def\bet{\beta}
\def\gam{\gamma}
\def\del{\delta}
\def\lam{\lambda}
\def\th{\theta}

\def\f{\mathbf{f}}
\def\u{\mathbf{u}}
\def\v{\mathbf{v}}
\def\w{\mathbf{w}}
\def\x{\mathbf{x}}
\def\y{\mathbf{y}}
\def\z{\mathbf{z}}
\def\n{\mathbf{n}}
\def\btau{\boldsymbol{\tau}}
\def\bnu{\boldsymbol{\nu}}
\def\bxi{\boldsymbol{\xi}}

\def\C{\mathbf{C}}
\def\M{\mathbf{M}}
\def\G{\mathbf{G}}
\def\P{\mathbf{P}}
\def\V{\mathbf{V}}
\def\W{\mathbf{W}}
\def\X{\mathbf{X}}
\def\Y{\mathbf{Y}}
\def\Z{\mathbf{Z}}

\newcommand{\Evec}[2]{\overrightarrow{#1#2}}
\newcommand{\PO}[1]{\mathcal{P}_h^{#1}(\Omega)}

\def\Xh{[\mathcal{P}_h^4(\O)\cap H_0^1(\O)]^2}
\def\Mh{\mathcal{P}_h^3(\O)}

\def\BB{\mathcal{B}}
\def\KK{\mathcal{K}}

\def\ST{\mathcal{S}}
\def\NS{\mathcal{N}}
\def\CS{\mathcal{C}}
\def\st{\mathfrak{s}}
\def\ns{\mathfrak{n}}
\def\cs{\mathfrak{c}}

\def\QQ{{Q}}
\def\m{\mathpzc{m}}
\def\CC{\mathfrak{C}}
\def\pihp{\Pi_h p}
\def\ph{p_h}
\def\JJ{\mathcal{J}}
\def\jump{\mathfrak{J}}
\def\jjp{j\hspace{1pt}j+1}
\def\tu{\widetilde{\u}}
\def\t{\boldsymbol{\tau}}

\def\RR{\mathpzc{R}}
\def\SS{\mathpzc{S}}

\maketitle
\begin{abstract}
   This paper  will suggest a new finite element method to find a $P^4$-velocity
  and a $P^3$-pressure solving incompressible Stokes equations at low cost.
  The method solves first the decoupled equation for a $P^4$-velocity.
  Then, using the calculated velocity,
  a locally calculable $P^3$-pressure will be defined component-wisely.
  The resulting $P^3$-pressure is analyzed to have the optimal order of
  convergence. 
  Since the pressure is calculated by local computation only, 
  the chief time cost of the new method is on solving the 
  decoupled equation for the $P^4$-velocity.
  Besides,  the method overcomes the problem of singular vertices or corners.
\end{abstract}

\section{Introduction}
High order finite element methods for incompressible Stokes equations
 have been developed well in 2 dimensional domain and analyzed along with the inf-sup condition
 \cite{Ainsworth1, Ainsworth2, Falk2013, Guzman2018, Scott1985}.
 They, however, endure their large degrees of freedom
 and have to avoid  singular vertices or corners.

 In the Scott-Vogelius finite element method, the inf-sup condition fails
 if the mesh has an exact singular vertex.
 Even on nearly singular vertices, the pressure solution is easy to be spoiled.
 Recently, to fix the problem, 
 we have found a cause of singular vertex
 and devised a new error analysis based on a so called sting function.
 As a result, the ruined pressure can be restored by simple post-process \cite{Park2020}.

 In this paper, employing the previous new error analysis,
 we will suggest  a new finite element method to find a $P^4$-velocity
  and a $P^3$-pressure solving incompressible Stokes equations at low cost.
 
 The method will solve first the decoupled equation for a divergence-free
 $P^4$-velocity which is almost same as the one from the Falk-Neilan finite element method
 except corners \cite{Falk2013}.
 Then, utilizing the calculated velocity and orthogonal decomposition of $P^3$,
 the 5 locally calculable components of a $P^3$-pressure will be defined
 by exploring locally calculable components
 in the Falk-Neilan and Scott-Vogelius finite element spaces
 \cite{Falk2013, Guzman2018, Scott1985}.
 The resulting $P^3$-pressure is analyzed to have the optimal order of convergence.

 Since the $P^3$-pressure is calculated by local computation only,
 the chief time cost of the new method is on solving the decoupled equation for the
 $P^4$-velocity.
 If the pressure has a region of interest in $\O$, the regional computation is enough for it.
 Besides, the method overcomes the problem arising from the singular
  vertices or corners by using the jump of the a priori calculated pressure components.
  
  In the overall paper, the characteristics of sting functions depicted
  in Figure \ref{fig:sting}-(a)
  play key roles as in the previous work in  \cite{Park2020}.
  Since the sting function exists in $P^k$ for every integer $k\ge0$,  
  the results for $P^4-P^3$ in this paper are easily extended for $P^{k+1}-P^k$, $k\ge 4$ 
  \cite{Park2021}.
  
The paper is organized as follows.
In the next section, the detail on finding a $P^4$-velocity  will be offered.
We will introduce an orthogonal decomposition of the space of $P^3$-pressures,
based on the orthogonality of sting and non-sting functions
in Section \ref{sec:orho-decom}.
Then, the most sections
will be devoted to defining the non-sting  component for each triangle
in Section \ref{sec:non-sting}
and the sting component for each vertex classified by
regular vertices, nearly singular ordinary vertices and dead corners
in Section \ref{sec:clust}-\ref{sec:dead}.
After the piecewise constant component is done in Section \ref{sec:const},
the final $P^3$-pressure will be defined by summing up all the components
in Section \ref{sec:def-p3-pressure}.
In the last two sections, a summary of the method and numerical tests will be given.

Throughout the paper, for a set $S \subset \mathbb R^2$,
standard notations for Sobolev spaces are employed and
$L_0^2(S)$ is the space of all $f\in L^2(S)$ whose integrals over $S$ vanish.
We will use $\|\cdot\|_{m,S}$, $|\cdot|_{m,S}$ and  $(\cdot,\cdot)_S$
for the norm, seminorm for $H^m(S)$ and  $L^2(S)$  inner product, respectively. 
If $S=\O$, it may be omitted in the subscript.
Denoting by $P^k$, the space of all polynomials of degree less than or equals $k$,
$f\big|_S\in P^k$ will mean that $f\in L^2(\O)$ coincides with a polynomial in $P^k$ on $S$.

\section{Velocity from the decoupled equation}
Let $\O$ be a simply connected polygonal domain in $\R^2$.
In this paper, we will approximate a pair of velocity and pressure
$(\u,p)\in [H_0^1(\O)]^2 \times L_0^2(\O)$ which satisfies
an incompressible Stokes equation:
\begin{equation}\label{prob:conti}
  ( \nabla \u,\nabla \v) +(p,\div \v)+(q,\div \u)
  = (\mathbf{f},\v) \quad \mbox{ for all } (\v,q) \in [H_0^1(\O)]^2\times  L_0^2(\O),
\end{equation}
for a body force $\mathbf{f}\in [L^2(\O)]^2$.

Given a family of shape-regular triangulations $\{\Th\}_{h>0}$  of $\O$,
define $\mathcal{P}_h^k(\O)$ as the following space of piecewise polynomials:
\[\mathcal{P}_h^k(\O) =\{ v_h \in L^2(\O)\ : \ v_h\big|_K \in P^k
  \mbox{ for all triangles } K\in\Th \}, \quad k \ge 0.\]
Let $\Sigma_{h,0}$ be a space of $\mathcal{C}^1$-Argyris  $P^5$ triangle elements
\cite{Brenner2002, Ciarlet} such that
\begin{equation}\label{def:Argyris}
  \Sigma_{h,0}=\mathcal{P}_h^5(\O) \cap H_{0}^2(\O),
\end{equation}
where
\[ H_{0}^2(\O)=\{ \phi\in H^2(\O)\ :\ \phi, \phi_x, \phi_y \in H_0^1(\O)\}.\]
The degrees of freedom of $\phi\in\Sigma_{h,0}$ are
$\phi_{xx}, \phi_{xy}, \phi_{yy}, \phi_x, \phi_y, \phi$ at interior vertices,
$\phi_{\bnu}$ at midpoints of interior edges and $\phi_{\n\n}$ at non-corner boundary vertices,
where $\bnu, \n$ are unit vectors normal to edges, $\p\O$, respectively.

Define a divergence-free space $\Z_{h,0}$ as
\begin{equation}\label{def:zh0}
  \Z_{h,0}=\{ (\phi_{h,y},-\phi_{h,x})\  \ :\ \phi_h\in \Sigma_{h,0}\hspace{1pt} \} \subset
  [\mathcal{P}_h^4(\O)\cap H_0^1(\O)]^2.
\end{equation}
We note that
\[\dim\Sigma_{h,0}=\dim \Z_{h,0} = 6 \V^{\mbox{in}} + E^{\mbox{in}} + \V^{\mbox{bdy}} -\V^{\mbox{cnr}},     \]
where $\V^{\mbox{in}},  E^{\mbox{in}}, \V^{\mbox{bdy}}$, and $\V^{\mbox{cnr}} $
are the numbers of interior vertices, interior edges, boundary vertices and corners, respectively  \cite{Falk2013}.

Then, we can solve  $\u_h\in \Z_{h,0}$ satisfying the following decoupled equation:
\begin{equation}\label{eq:dclv-SC}
  (\nabla \u_h, \nabla \v_h)= (\f,\v_h)\quad \mbox{ for all } \v_h\in \Z_{h,0}.
\end{equation}
\begin{theorem}\label{th:vel-error}
  Let $(\u,p)\in [H_0^1(\O)]^2 \times L_0^2(\O)$ and  $\u_h\in \Z_{h,0}$
  satisfy  \eqref{prob:conti}, \eqref{eq:dclv-SC}, respectively.
   Then we estimate
  \begin{equation}\label{eq:th:vel-error-0}
    |\u -\u_h|_1 \le Ch^4 |\u|_5,
  \end{equation}
  if $\u\in [H^5(\O)]^2$, where $C$ is a constant independent of $h$.
\end{theorem}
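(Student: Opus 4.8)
The plan is to derive \eqref{eq:th:vel-error-0} from the standard Céa-type argument for the decoupled problem \eqref{eq:dclv-SC}, using the fact that $\u_h$ is the $H^1$-orthogonal projection of $\u$ onto $\Z_{h,0}$. First I would observe that the exact velocity $\u$ is divergence-free, so $\u \in \Z := \{\v \in [H_0^1(\O)]^2 : \div \v = 0\}$, and that \eqref{prob:conti} restricted to divergence-free test functions $\v \in \Z$ reads $(\nabla \u, \nabla \v) = (\f, \v)$, since the pressure term drops out. Since $\Z_{h,0} \subset \Z$ (every $\phi_h \in \Sigma_{h,0}$ gives $\div(\phi_{h,y}, -\phi_{h,x}) = 0$), subtracting \eqref{eq:dclv-SC} yields Galerkin orthogonality $(\nabla(\u - \u_h), \nabla \v_h) = 0$ for all $\v_h \in \Z_{h,0}$. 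By coercivity of $(\nabla \cdot, \nabla \cdot)$ on $[H_0^1(\O)]^2$ (Poincaré), this gives the quasi-optimality estimate
\[
  |\u - \u_h|_1 \le \inf_{\v_h \in \Z_{h,0}} |\u - \v_h|_1 .
\]

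The heart of the matter is then a divergence-free approximation estimate: I need to exhibit some $\v_h \in \Z_{h,0}$ with $|\u - \v_h|_1 \le C h^4 |\u|_5$. This is where the stream-function formulation pays off. Writing $\u = (\psi_y, -\psi_x)$ for a stream function $\psi \in H_0^2(\O)$ (available because $\O$ is simply connected and $\u$ is divergence-free with zero boundary trace), the regularity $\u \in [H^5(\O)]^2$ gives $\psi \in H^6(\O)$ with $|\psi|_6 \le C|\u|_5$. Now I would invoke the standard approximation property of the $\mathcal{C}^1$ Argyris space: there exists $\psi_h \in \Sigma_{h,0}$ with $|\psi - \psi_h|_2 \le C h^4 |\psi|_6$ (the Argyris element reproduces $P^5$, so it has approximation order $h^{6-2} = h^4$ in the $H^2$-seminorm, and the boundary conditions $\psi, \psi_x, \psi_y \in H_0^1(\O)$ match those defining $\Sigma_{h,0}$). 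Setting $\v_h := (\psi_{h,y}, -\psi_{h,x}) \in \Z_{h,0}$, one has $|\u - \v_h|_1 \le C|\psi - \psi_h|_2 \le C h^4 |\psi|_6 \le C h^4 |\u|_5$, and combining with the quasi-optimality bound finishes the proof.

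The main obstacle — and the only point that needs genuine care rather than bookkeeping — is the construction of the interpolant $\psi_h \in \Sigma_{h,0}$ with the correct $H^2$ approximation order \emph{respecting the homogeneous boundary conditions}. On a general shape-regular family this is the classical Argyris interpolation error estimate (see \cite{Brenner2002, Ciarlet}), but some attention is required near boundary vertices and corners, since the degrees of freedom of $\Sigma_{h,0}$ are reduced there (no $\phi_{\n\n}$ at corners) and $\psi \in H^6(\O)$ must still be interpolated consistently; a Scott–Zhang-type modification of the nodal Argyris interpolant handles this while preserving $\psi_h \in H_0^2(\O)$. Alternatively, since the excerpt notes that the present $\u_h$ "is almost the same as the one from the Falk–Neilan finite element method except corners," one may simply cite the corresponding approximation result from \cite{Falk2013} for $\Z_{h,0}$ directly, which packages exactly the estimate $\inf_{\v_h \in \Z_{h,0}} |\u - \v_h|_1 \le C h^4 |\u|_5$ and lets the proof conclude immediately from Galerkin orthogonality and coercivity.
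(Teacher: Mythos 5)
Your proof is correct and follows essentially the same route as the paper's: you reduce to a divergence-free best-approximation problem via Galerkin orthogonality, then construct the approximant by interpolating the stream function $\phi\in H_0^2(\O)$ with the $\mathcal{C}^1$-Argyris space $\Sigma_{h,0}$ and applying the curl. The only cosmetic differences are that you state quasi-optimality as an infimum (the paper instead plugs in the specific interpolant $\Pi_h\u$ and uses the triangle inequality), and you explicitly flag the corner degree-of-freedom subtlety in $\Sigma_{h,0}$, which the paper silently absorbs into its Bramble--Hilbert step.
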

\begin{proof}
  Since $(\u,p)\in [H_0^1(\O)]^2 \times L_0^2(\O)$ satisfies \eqref{prob:conti}, we have
  $\div\u=0$. Thus,
  there exists a stream function $\phi\in H_0^2(\O)$ such that \cite{GR}
  \begin{equation}\label{eq:th-uphi}
    \u=(\phi_y, -\phi_x).
  \end{equation}
  Let $\Pi_h\phi\in \Sigma_{h,0}$ be a projection of $\phi$ such that
  the Hessians, gradients  and values of $\phi-\Pi_h\phi$ at vertices
  and its normal derivatives at midpoints of edges vanish.
  Then, if $\u\in[H^5(\O)]^2$, by Bramble-Hilbert lemma, we have 
  \begin{equation}\label{eq:th:vel-error-1p}
    |\phi-\Pi_h\phi|_2 \le C h^4 |\phi|_6.
  \end{equation}
  
  If we denote $\Pi_h\u=\left((\Pi_h\phi)_y, -(\Pi_h\phi)_x\right)\in \Z_{h,0}$,
  then from \eqref{eq:th-uphi} and \eqref{eq:th:vel-error-1p}, we estimate
  \begin{equation}\label{eq:th:vel-error-1}
    |\u-\Pi_h\u|_1 \le Ch^4 |\u|_5.
  \end{equation}
We note that $(p,\div\v_h)=0$ for all $\v_h\in \Z_{h,0}$. Thus, 
  from \eqref{prob:conti} and \eqref{eq:dclv-SC}, we deduce
\[ (\nabla\u-\nabla\u_h,\nabla\v_h)=0\quad\mbox{ for all } \v_h\in \Z_{h,0}.\]
It is written in the form:
\begin{equation}\label{eq:th:vel-error-2}
  (\nabla\Pi_h\u-\nabla\u_h,\nabla\v_h)=(\nabla\Pi_h\u -\nabla\u,\nabla\v_h)
  \quad\mbox{ for all } \v_h\in \Z_{h,0}.
\end{equation}
Then we can establish
\eqref{eq:th:vel-error-0} from \eqref{eq:th:vel-error-1} and \eqref{eq:th:vel-error-2}
with $\v_h=\Pi_h\u-\u_h\in \Z_{h,0}$.
\end{proof}
\section{Orthogonal decomposition of $P^3$-pressures}\label{sec:orho-decom}
For a triangle $K\in\Th$ and an integer $k\ge0$, define 
\[ P^k(K)=\{ q\in L^2(\O) :\ q\big|_K\in P^k,\ q=0  \mbox{ on } \O\setminus K  \}.\]
In the remaining of the paper, we will use the following notations:
\begin{itemize}
\item[] $C$ : a generic constant which does not depend on $h$ of $\Th$,
\item[]  $\KK(\V)$ : the union of all triangles in $\Th$ sharing a vertex $\V$,
\item[] $\bxi^\perp$ : the counterclockwise  $90^\circ$ rotation of a vector $\bxi$,
\item[] $|S|$: the area or length of a set $S$,
\item[]  $\m(f)$: the average of a function $f$ over $\O$.
\end{itemize}
  
We assume the following on $\Th$ to exclude pathological meshes.
\begin{assumption}\label{asm:Th}
    No triangle in $\Th$ has two corner points of $\p\O$.
\end{assumption}

\subsection{sting function}
Let $\V$ be a vertex of a triangle $K$. Then there exists a unique
function $\st_{\V K}\in P^3(K)$ satisfying the following
quadrature rule:
\begin{equation}
  \label{def:sting}
  \int_K \st_{\V K}\ q\ dxdy=\frac{|K|}{100} q(\V)\quad \mbox{ for all } q\in P^3,
\end{equation}
since the both sides of \eqref{def:sting} are linear functionals on $P^3$.
If $\widehat K$ is a reference triangle with vertices $(0,0),(1,0),(0,1)$ and $\widehat\V=(0,1)$, we have
\begin{equation}
  \label{eq:exp-sting}
  \st_{\widehat\V \widehat K}(x,y)=\frac{28}5 y^3-\frac{63}{10} y^2 +\frac95y-\frac1{10},
\end{equation}
as depicted in Figure \ref{fig:sting}-(a).
Given a vertex $\V$ of $K$, we note that
\begin{equation}\label{eq:affine-sting}
  \st_{\V K}= \st_{\widehat\V \widehat K}\circ F^{-1}\quad\mbox{ for an affine transformation }
  F:\widehat{K}\longrightarrow K.
\end{equation}
Thus, the values of $\st_{\V K}$ are inherited from those of $\st_{\widehat\V \widehat K}$ as
\begin{equation}
  \label{eq:value-sting}
  \st_{\V K}(\V)=1,\quad \st_{\V K}\Big|_E=-\frac1{10} \mbox { on the opposite edge }E \mbox{ of }\V. 
\end{equation}
If $q=\alp\st_{\V K}$ for a scalar $\alp$,
we call it a sting function of $\V$ on $K$, named after the shape
of its graph as in Figure \ref{fig:sting}-(a).

For a triangle $K$, define a subspace of $P^3(K)$ as
\begin{equation}
  \label{def:stk}
  \ST(K)=<\st_{\V_1 K},\st_{\V_2 K}, \st_{\V_3 K}>,
\end{equation}
where $\V_1, \V_2, \V_3$ are 3 vertices of $K$. 
From \eqref{eq:value-sting}, it is easy to prove that
\begin{equation}
  \label{eq:dim-sting}
  \dim\ST(K)=3.
\end{equation}
\begin{figure}[ht]
  \subfloat[a sting function $\st_{\V K}$ of $\V$ on $K$ ]
  {\includegraphics[width=0.54\linewidth]{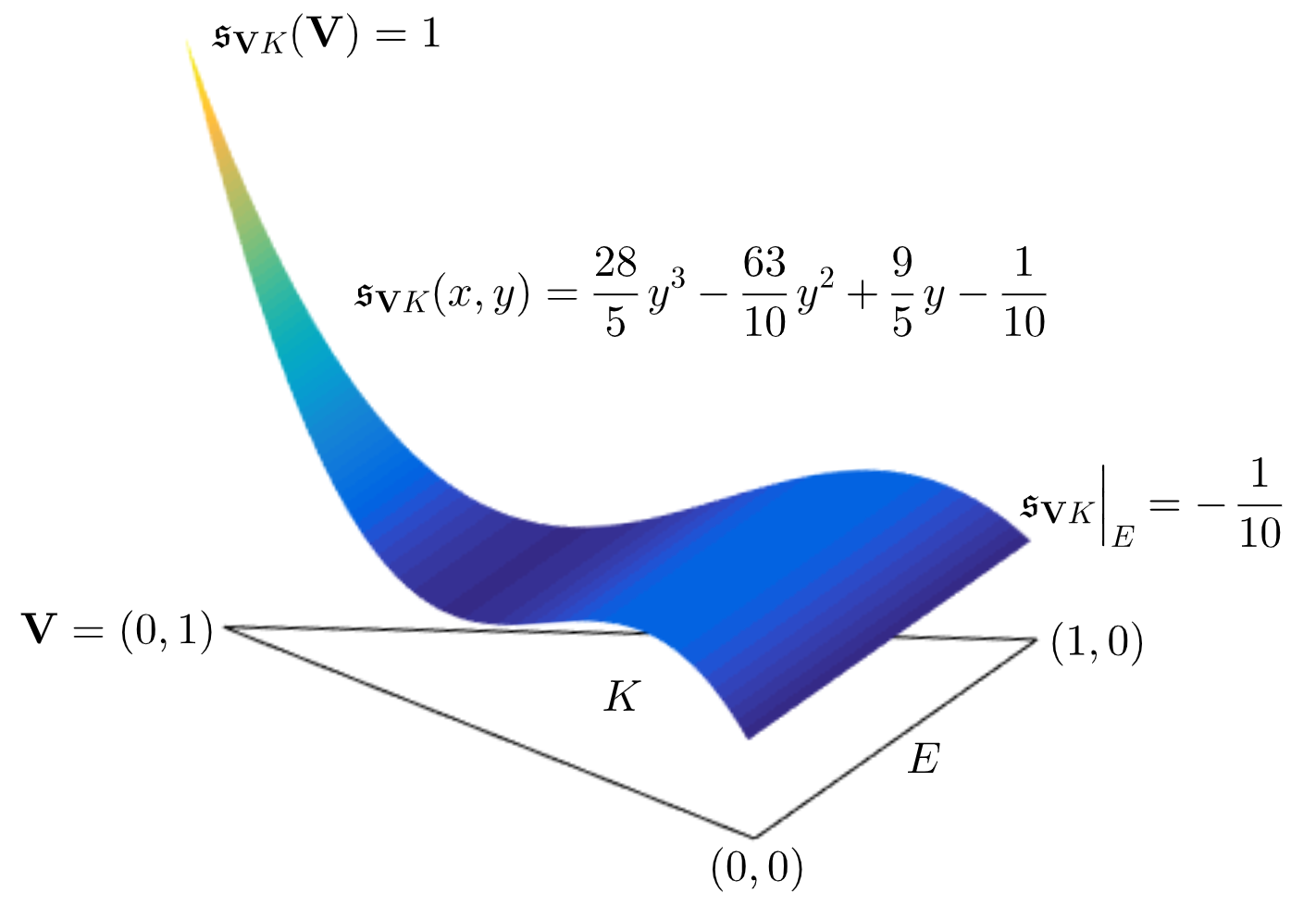}}
  \subfloat[a non-sting function $\ns$  on $K$]
  {\raisebox{2ex}{
      \includegraphics[width=0.45\linewidth]{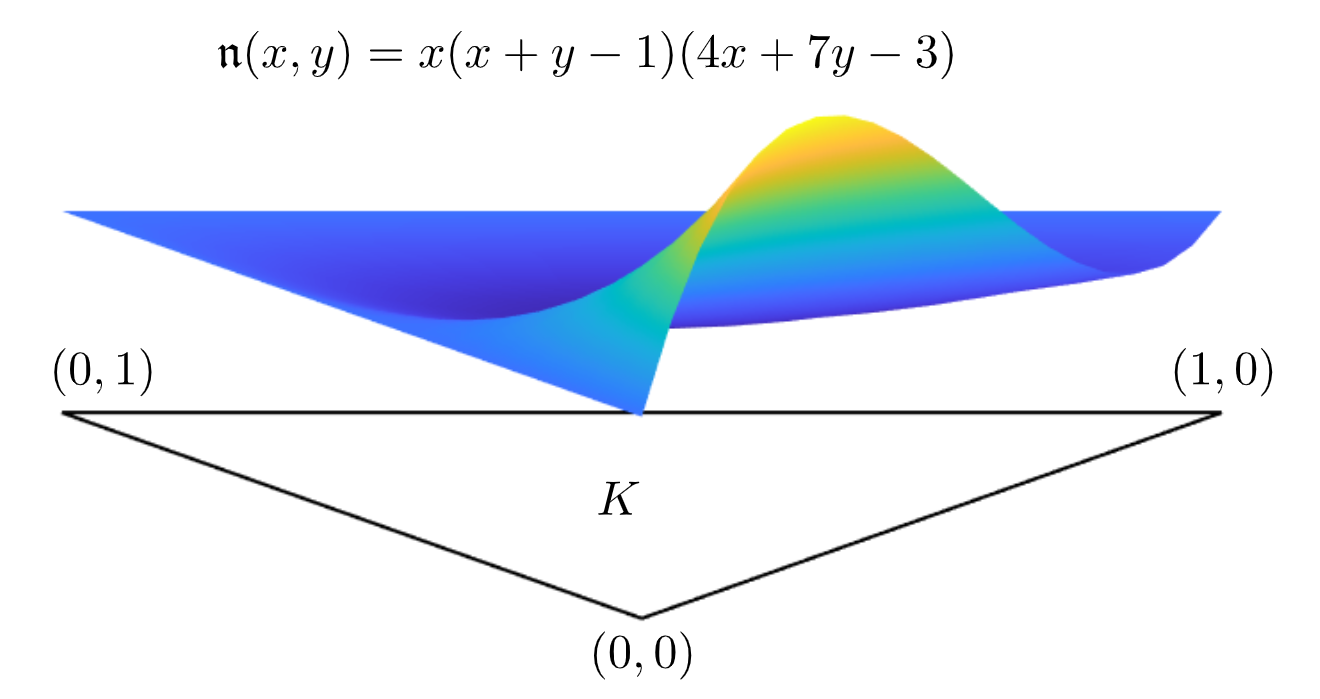}}
      }
\caption{examples of sting and non-sting functions  }
\label{fig:sting}
\end{figure}

\subsection{non-sting function}
For a triangle $K$, let
\begin{equation}\label{def:BK}
      \BB(K)=\{ \v\in [P^4(K)]^2\ : \ \v ={\bf{0}}\mbox{ on } \p K\},
    \end{equation}
and define a subspace of $P^3(K)$ as
\begin{equation}
  \label{def:nonsting}
  \NS(K)=\{\div\v\ :\ \v\in \BB(K)\}.
\end{equation}
If  $q\in \NS(K)$, we will call it a non-sting function on $K$.
By definition in \eqref{def:BK}, \eqref{def:nonsting},  every non-sting function $q\in\NS(K)$
has the following properties:
\begin{equation}
  \label{eq:prop-tb}
  q(\V)=0 \mbox{ for every vertex }\V\mbox{ of }K,\quad \int_K q\ dxdy=0.
 \end{equation}
An example of its graph is depicted in Figure \ref{fig:sting}-(b).

Then, the following orthogonality is clear from  \eqref{def:stk}, \eqref{eq:prop-tb} and the quadrature rule in \eqref{def:sting},
\begin{equation}\label{eq:st-ns-perp}
 \NS(K) \perp \ST(K).
\end{equation}
The fact $\dim \BB(K)=6$ induces the following,
with an aid of Lemma \ref{lem:bk-div0} below,
\begin{equation}\label{eq:dim-nsting}
  \dim\NS(K)=6.
\end{equation}
\begin{lemma}\label{lem:bk-div0}
  If $\v\in \BB(K)$ and $\div\v=0$, then $\v=\bf{0}$.
\end{lemma}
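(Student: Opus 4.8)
The plan is to exhibit a stream function for $\v$ and use the boundary conditions to kill it. Since $\v\in\BB(K)\subset[P^4(K)]^2$ is divergence-free on the (simply connected) triangle $K$, there exists $\psi\in P^5$ with $\v=(\psi_y,-\psi_x)$ on $K$; this $\psi$ is determined up to an additive constant. The condition $\v=\mathbf 0$ on $\p K$ forces the tangential derivative of $\psi$ along each of the three edges to vanish, so $\psi$ is constant on each edge; normalizing, $\psi=0$ on all of $\p K$. Likewise $\v=\mathbf 0$ on $\p K$ forces the normal derivative $\psi_{\bnu}$ to vanish on each edge. Hence $\psi\in P^5$ vanishes to second order on $\p K$, i.e. $\psi$ and $\nabla\psi$ vanish on $\p K$.

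From here the key step is the following divisibility argument. Let $\ell_1,\ell_2,\ell_3$ be affine functions vanishing on the three edges of $K$. Since $\psi\in P^5$ vanishes on the line $\{\ell_i=0\}$, we get $\ell_i\mid\psi$ for each $i$; because the $\ell_i$ are pairwise non-proportional (a genuine triangle), $\ell_1\ell_2\ell_3\mid\psi$, so $\psi=\ell_1\ell_2\ell_3\,r$ with $r\in P^2$. Now impose that $\nabla\psi$ also vanishes on, say, the edge $\{\ell_1=0\}$: on that edge $\nabla\psi=(\ell_2\ell_3 r)\nabla\ell_1$ (the other two terms from the product rule carry a factor $\ell_1$ and drop out), and $\nabla\ell_1\ne\mathbf 0$, so $\ell_2\ell_3 r\equiv0$ on $\{\ell_1=0\}$. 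Since $\ell_2\ell_3$ is not identically zero on that line, $r$ vanishes on the line $\{\ell_1=0\}$, i.e. $\ell_1\mid r$; repeating for the other two edges, $\ell_1\ell_2\ell_3\mid r$. But $r\in P^2$ while $\ell_1\ell_2\ell_3\in P^3$, so $r=0$, whence $\psi=0$ and $\v=\nabla^\perp\psi=\mathbf 0$.

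The only place needing care — the main (minor) obstacle — is the bookkeeping that $\psi$ really lies in $P^5$ and that the product-rule terms vanish as claimed on each edge; both are routine once one fixes affine coordinates aligned with an edge, so I would phrase the divisibility step invariantly (via the $\ell_i$) rather than computing in coordinates. An alternative, slightly slicker finish after obtaining $\psi=\ell_1\ell_2\ell_3 r$: note $\nabla\psi$ vanishing on $\p K$ means $\psi$ has a critical point along each edge, and a degree count ($\psi\in P^5$ has the $3$-fold "edge divisor" already using up degree $3$, leaving only $r\in P^2$, which cannot absorb another full triangle divisor) gives $r=0$ immediately. Either way the lemma follows, and in fact this also re-proves $\dim\NS(K)=6$ as asserted in \eqref{eq:dim-nsting}, since $\div:\BB(K)\to\NS(K)$ is then injective from a $6$-dimensional space.
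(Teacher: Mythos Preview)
Your proof is correct and follows essentially the same approach as the paper: both construct a $P^5$ stream function $\psi$ with $\psi$ and $\nabla\psi$ vanishing on $\p K$, then argue $\psi\equiv 0$. The only cosmetic difference is the closing step---the paper observes that $\psi$ restricted to any line meeting the three edge-lines has six roots (with multiplicity) and hence vanishes, while you phrase the same degree obstruction as the divisibility $\ell_1\ell_2\ell_3\mid\psi$ and $\ell_1\ell_2\ell_3\mid r$; these are equivalent standard finishes.
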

\begin{proof}
  Since $\div\v=0$ on $K$ and $\v=\bf{0}$ on $\p K$, there exists $\phi\in P^5$ such that
  \[ (\phi_y, -\phi_x)=\v \mbox{ on } K,\quad \phi=0\mbox{ on }\p K. \]
  Let $\ell_1, \ell_2, \ell_3$ be 3 infinite lines containing 3 line segments of $\p K$,
  respectively.
  Then $\phi, \nabla\phi$ vanish on $\ell_1, \ell_2, \ell_3$.
  It implies that $\phi$ vanishes on any line which passes 3 points
  in $\ell_1 \cup \ell_2 \cup \ell_3$.
  Thus we have $\phi=0$ and  $\v=\bf{0}$ on $K$.
\end{proof}
The above lemma tells that $\|\div\v\|_{0,K}$ is a norm of $\v\in \BB(K)$.
Furthermore, we can show that  \cite{Guzman2018}
\begin{equation}
  \label{eq:div-bk-norm}
   |\v|_{1,K} \le C \|\div\v\|_{0,K}\quad \mbox{ for all } \v\in \BB(K).
 \end{equation}
 Actually, $\NS(K)$ in \eqref{def:nonsting} is the space of all function $q\in P^3(K)$ satisfying  \eqref{eq:prop-tb}.

\subsection{orthogonal decomposition}
Let $1^K\in P^3(K)$ be a constant function of value 1 on $K$.
Then, we can decompose $P^3(K)$ as in the following lemma.
We will notate
\[ A\bigoplus^\perp B\quad\mbox{ for }A\bigoplus B, \mbox{ if } A\perp B. \]
\begin{lemma}\label{lem:p3kdecom}
  \begin{equation}
    \label{eq:lem-p3k-decom}
    P^3(K)=\NS(K) \bigoplus^\perp \left( \ST(K)\bigoplus  <1^K> \right).
  \end{equation}
\end{lemma}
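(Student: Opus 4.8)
The plan is to deduce \eqref{eq:lem-p3k-decom} from the orthogonality relations already established together with a dimension count, so that essentially no new work is needed beyond one short computation. First I would record that $\dim P^3(K)=10$, while \eqref{eq:dim-nsting} and \eqref{eq:dim-sting} give $\dim\NS(K)=6$ and $\dim\ST(K)=3$; hence the three summands on the right-hand side of \eqref{eq:lem-p3k-decom} have total dimension $6+3+1=10$. Consequently it suffices to show that the sum $\NS(K)+\ST(K)+<1^K>$ is direct and carries the claimed orthogonality pattern: since it then sits inside the $10$-dimensional space $P^3(K)$ and has dimension $10$, equality is automatic.

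For the orthogonality, \eqref{eq:st-ns-perp} already provides $\NS(K)\perp\ST(K)$, and the second identity in \eqref{eq:prop-tb} gives $(q,1^K)_K=\int_K q\,dxdy=0$ for every $q\in\NS(K)$, i.e. $\NS(K)\perp<1^K>$; together these yield $\NS(K)\perp\bigl(\ST(K)+<1^K>\bigr)$. In particular the sum of $\NS(K)$ with $\ST(K)+<1^K>$ is direct, since any element $v$ of the intersection would satisfy $(v,v)_K=0$ and hence vanish.

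The one genuinely computational point — and the step I expect to need the most care — is to check that the inner sum $\ST(K)\bigoplus<1^K>$ is direct, equivalently that $1^K\notin\ST(K)$. Suppose $1^K=\sum_{i=1}^3\alp_i\,\st_{\V_i K}$. Evaluating both sides at a vertex $\V_i$ and using \eqref{eq:value-sting}, so that $\st_{\V_i K}(\V_i)=1$ and $\st_{\V_j K}(\V_i)=-\tfrac1{10}$ for $j\ne i$, gives the same equation $\tfrac{11}{10}\alp_i-\tfrac1{10}(\alp_1+\alp_2+\alp_3)=1$ for each $i$, which forces $\alp_1=\alp_2=\alp_3=\tfrac54$. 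But then, integrating over $K$ and using the quadrature rule \eqref{def:sting} with $q\equiv1$, so that $\int_K\st_{\V_i K}\,dxdy=|K|/100$, we would obtain $\int_K 1^K\,dxdy=\tfrac{15}{4}\cdot\tfrac{|K|}{100}=\tfrac{3}{80}|K|$, contradicting $\int_K 1^K\,dxdy=|K|$. Hence $1^K\notin\ST(K)$, so $\dim\bigl(\ST(K)\oplus<1^K>\bigr)=4$ and therefore $\dim\bigl(\NS(K)\oplus\ST(K)\oplus<1^K>\bigr)=6+4=10=\dim P^3(K)$, which establishes \eqref{eq:lem-p3k-decom}. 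It is worth stressing in the writeup that $\ST(K)$ is \emph{not} orthogonal to $<1^K>$ — indeed $\int_K\st_{\V K}\,dxdy=|K|/100\neq0$ by \eqref{def:sting} — which is precisely why the decomposition brackets $\ST(K)$ together with $<1^K>$ and only asserts orthogonality of that block against $\NS(K)$.
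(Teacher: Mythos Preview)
Your proof is correct and follows essentially the same route as the paper's: both reduce \eqref{eq:lem-p3k-decom} to a dimension count together with the single check that $1^K\notin\ST(K)$, and both dispatch that check by identifying the unique candidate $\tfrac54(\st_{\V_1K}+\st_{\V_2K}+\st_{\V_3K})$ and observing that its integral equals $\tfrac{3}{80}|K|\neq|K|$. The only cosmetic difference is that the paper arrives at the coefficients $\tfrac54$ via the quadrature rule \eqref{def:sting} (showing $1^K-\st_1^K\perp\ST(K)$), whereas you solve for them by direct vertex evaluation; the substance is the same.
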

\begin{proof}
From \eqref{eq:prop-tb}, we have $<1^K>\perp \NS(K)$. Thus, by \eqref{eq:dim-sting}, \eqref{eq:st-ns-perp},
  \eqref{eq:dim-nsting},
  it is enough to prove  \eqref{eq:lem-p3k-decom} that
  \[ 1^K \notin \ST(K). \]

  For the  vertices $\V_1, \V_2, \V_3$ of $K$, let
\begin{equation*}
  \st_1^K=\frac54(\st_{\V_1K} + \st_{\V_2K} + \st_{\V_3K}) \in \ST(K).
\end{equation*}
Then, by \eqref{eq:value-sting},
$ 1^K-\st_1^K$ vanishes at all vertices of $K$.
It means, from  \eqref{def:sting}, that
\begin{equation}\label{eq:lem-p3kdecom7}
  \int_K q(1^K-\st_1^K)\ dxdy=0 \quad\mbox{ for all } q\in\ST(K).
\end{equation}
Assume $1^K\in\ST(K)$. Then, $1^K- \st_1^K\in\ST(K)$ and  $1^K- \st_1^K=0$
from \eqref{eq:lem-p3kdecom7}.
It contradicts to
\begin{equation*}
   \int_K \st_{1}^K\ dxdy =\frac54\int_K \st_{\V_1K}+ \st_{\V_2K}+ \st_{\V_3K}\ dxdy
  =\frac3{80}|K|\neq |K|= \int_K 1^K\ dxdy.
\end{equation*}
\end{proof}

Let's define the following subspaces of $\PO3=\disp\bigoplus_{K\in\Th}P^3(K)$:
\begin{equation}\label{def:STNSCS}
 \NS_h=\bigoplus_{K\in\Th}\NS(K),\qquad
 \ST_h = \bigoplus_{K\in\Th} \ST(K),\qquad \CS_h=\bigoplus_{K\in\Th} <1^K>.
\end{equation}
Then by Lemma \ref{lem:p3kdecom}, we have
\begin{equation}
  \label{eq:decom-p3omega}
  \PO3 =  \NS_h \bigoplus^\perp\left(\ST_h \bigoplus  \CS_h \right).
\end{equation}
\subsection{decomposition of $\pihp$}
For $(\u,p)$
satisfying  \eqref{prob:conti}, let
$\pihp\in \Mh$ be a Hermite interpolation of $p$
such that
\begin{equation}\label{def:php}
  \nabla\pihp (\V)=\nabla p(\V),\quad  \pihp (\V)=p(\V),\quad \pihp(\G)=p(\G),
\end{equation}
at all vertices $\V$ and gravity centers $\G$ of triangles in $\Th$.
Then, if $p\in H^4(\O)$, we have
\begin{equation}
  \label{eq:pihp-inter-error}
  \|p-\pihp\|_0 \le Ch^4 |p|_4.
\end{equation}

By  \eqref{eq:decom-p3omega}, we can decompose $\pihp$ into
\begin{equation}\label{eq:decom-pihp}
\Pi_h p=   \pihp^{\NS} +\pihp^\ST  + \pihp^\CS,
\end{equation}
for
\begin{equation} \label{def:decom-pihp}
\pihp^{\NS}\in \NS_h, \quad \pihp^\ST \in \ST_h, \quad \pihp^{\CS}\in \CS_h,
\end{equation}
called the non-sting, sting and piecewise constant components of $\pihp$, respectively.
We will approximate them component-wisely, exploiting  
the following equation for $\pihp$ from \eqref{prob:conti}:
\begin{equation}\label{eq:pihp-0}
  (\Pi_hp,\div \v)=(\f,\v)-(\nabla\u,\nabla\v)-(p-\Pi_hp,\div\v)\quad \mbox{ for all }
  \v\in \left[H_0^1(\O)\right]^2.
\end{equation}

\section{Non-sting component for a triangle}\label{sec:non-sting}
Fix a triangle $K\in\Th$ and define an operator $L:\NS(K)\longrightarrow \BB(K)'$ so that,
if  $q\in\NS(K)$,
\[  L q(\v_h)=(q,\div\v_h)\mbox{ for all } \v_h\in\BB(K).\]
Then, $L$ is an isomorphism
from  the definition of $\NS(K)$ in  \eqref{def:nonsting} and
\[\dim\NS(K)=\dim\BB(K).\]

Thus, for each triangle $K\in\Th$, there exists a unique $\ph^K\in\NS(K)$ such that
\begin{equation}
  \label{cond:phK}
   (\ph^K,\div\v_h)=(\f,\v_h)-(\nabla\u_h,\nabla\v_h)\quad\mbox{ for all }
    \v_h\in \BB(K).
  \end{equation}

\begin{lemma}\label{lem:phns-error}
  Define
  \begin{equation*}
    p_h^\NS = \sum_{K\in\Th} p_h^K.
  \end{equation*}
  Then for $\pihp^\NS$ in \eqref{eq:decom-pihp}, we estimate
  \begin{equation}\label{eq:lem-phns}
    \|\pihp^\NS - \ph^\NS\|_{0,K} \le C(|\u-\u_h|_{1,K} + \|p-\pihp\|_{0,K})
    \quad\mbox{ for each } K\in\Th.
  \end{equation}
\end{lemma}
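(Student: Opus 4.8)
The plan is to compare $p_h^K\in\NS(K)$, defined by the local Neumann-type problem \eqref{cond:phK}, with $\pihp^\NS\big|_K\in\NS(K)$, using that on $\BB(K)$ the pairing $q\mapsto (q,\div\,\cdot\,)$ is an isomorphism onto $\BB(K)'$ with a mesh-independent (after scaling) stability constant, thanks to \eqref{eq:div-bk-norm} and Lemma \ref{lem:bk-div0}. Concretely, since both $p_h^K$ and $\pihp^\NS\big|_K$ lie in $\NS(K)=L^{-1}(\BB(K)')$, I would write
\begin{equation*}
 \|\pihp^\NS - p_h^K\|_{0,K} \le C \sup_{\v_h\in\BB(K)\setminus\{\mathbf 0\}}
   \frac{(\pihp^\NS - p_h^K,\div\v_h)}{|\v_h|_{1,K}},
\end{equation*}
where the constant $C$ comes from \eqref{eq:div-bk-norm} together with the open-mapping/inf-sup bound for $L$ on the reference triangle, transported by affine scaling (the $h$-powers in $\|\div\v_h\|_{0,K}$ and $|\v_h|_{1,K}$ match, so $C$ is $h$-independent for shape-regular meshes). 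Note $\pihp^\NS - p_h^K$ need not be supported only on $K$, but testing against $\v_h\in\BB(K)$, which vanishes on $\p K$, only sees the restriction to $K$; also $(\pihp^\ST+\pihp^\CS,\div\v_h)_K=0$ by the orthogonality \eqref{eq:st-ns-perp} and \eqref{eq:prop-tb}, so $(\Pi_h p - p_h^K,\div\v_h)_K=(\pihp^\NS-p_h^K,\div\v_h)_K$.

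Next I would identify the numerator. Using \eqref{cond:phK} for $p_h^K$ and the continuous equation \eqref{eq:pihp-0} tested with $\v_h\in\BB(K)\subset[H_0^1(\O)]^2$ (extended by zero), I get, for all $\v_h\in\BB(K)$,
\begin{equation*}
 (\pihp - p_h^K,\div\v_h)_K = (\nabla\u_h-\nabla\u,\nabla\v_h)_K - (p-\pihp,\div\v_h)_K .
\end{equation*}
Then a Cauchy--Schwarz estimate on the right gives
\begin{equation*}
 (\pihp - p_h^K,\div\v_h)_K \le \big(|\u-\u_h|_{1,K} + \|p-\pihp\|_{0,K}\big)\,|\v_h|_{1,K},
\end{equation*}
using $\|\div\v_h\|_{0,K}\le \sqrt2\,|\v_h|_{1,K}$. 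Dividing by $|\v_h|_{1,K}$ and taking the supremum, combined with the stability bound from the first paragraph, yields \eqref{eq:lem-phns} with $\pihp^\NS - \ph^\NS$ replaced by $\pihp^\NS - p_h^K$ on $K$; but since $\ph^\NS=\sum_{K'}p_h^{K'}$ and each $p_h^{K'}$ is supported in $K'$, we have $\ph^\NS\big|_K = p_h^K\big|_K$, so this is exactly \eqref{eq:lem-phns}.

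The main obstacle is making the stability constant in the first paragraph genuinely $h$-independent: one must verify that $L:\NS(K)\to\BB(K)'$ has an inverse bounded by a constant depending only on shape-regularity. I would handle this by a scaling argument — pull back to the reference triangle $\widehat K$ where $L$ is a fixed isomorphism between two finite-dimensional spaces (so its inverse is bounded by some absolute constant $\widehat C$), then track how $\|\cdot\|_{0,K}$, $\|\div\v_h\|_{0,K}$ and $|\v_h|_{1,K}$ scale under the affine map $F:\widehat K\to K$; the Jacobian factors cancel in the quotient up to ratios controlled by shape-regularity. A minor point to state cleanly is that $\v_h\in\BB(K)$, extended by zero outside $K$, indeed lies in $[H_0^1(\O)]^2$ so that \eqref{eq:pihp-0} applies — this is immediate from the boundary condition in \eqref{def:BK}. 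Everything else is routine Cauchy--Schwarz and the already-established bound \eqref{eq:div-bk-norm}.
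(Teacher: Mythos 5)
Your proposal is correct and follows essentially the same route as the paper: write the error equation by subtracting \eqref{cond:phK} from \eqref{eq:pihp-0} tested with $\v_h\in\BB(K)$, use the orthogonality of Lemma \ref{lem:p3kdecom} to reduce to the non-sting component, and close the estimate with Cauchy--Schwarz together with the stability bound \eqref{eq:div-bk-norm}. The only cosmetic difference is that the paper implicitly picks $\v_h$ with $\div\v_h = e_h^K$ rather than phrasing the bound as a sup over $\BB(K)$, and it leaves the scaling argument underlying the $h$-uniformity of \eqref{eq:div-bk-norm} to the cited reference \cite{Guzman2018}, which you spell out.
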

\begin{proof}
For each triangle $K\in\Th$, 
we note that  $\pihp^\NS$  in \eqref{eq:decom-pihp} satisfies that
   \begin{equation}
  \label{cond:pihptv-main}
  (\pihp^\NS,\div\v_h)=(\f,\v_h)-(\nabla\u,\nabla\v_h)-(p-\pihp,\div\v_h)
  \quad\mbox{ for all } \v_h\in \BB(K), 
\end{equation}
from  \eqref{eq:pihp-0} and orthogonality in Lemma \ref{lem:p3kdecom}.

Denote $e_h^K=\pihp^\NS\big|_K-\ph^\NS\big|_K$. Then
from   \eqref{cond:phK} and \eqref{cond:pihptv-main}, we have
  \begin{equation}
  \label{eq:error-rhk}
  (e_h^K,\div\v_h)=-(\nabla\u-\nabla\u_h,\nabla\v_h)-(p-\pihp,\div\v_h)
  \quad\mbox{ for all } \v_h\in \BB(K). 
\end{equation}
Since $e_h^K\in\NS(K)$,
the estimation \eqref{eq:lem-phns} comes from  \eqref{eq:div-bk-norm}, \eqref{eq:error-rhk}
and the definition of $\NS(K)$ in \eqref{def:nonsting}.
\end{proof}

\section{Clustering sting functions by vertex}\label{sec:clust}
\subsection{regular and nearly singular vertices}\label{sec:def-sing}
\def\vts{\vartheta_{\sigma}}
A vertex $\V$ is called exactly singular if the union of all edges sharing $\V$
is contained in the union of two infinite lines.
To be precise, let $K_1, K_2,\cdots,K_J$ be all triangles sharing $\V$
and denote by $\theta(K_j)$, the angle of $K_j$ at $\V$, $j=1,2,\cdots,J$. 
Define
\[\Upsilon(\V)=\{\theta(K_i) + \theta(K_j)\ : \ K_i\cap K_j \mbox{ is an edge, } i,j=1,2,\cdots,J \}. \]
Then $\V$ is called exactly singular  if and only if 
$\Upsilon(\V)=\{\pi\} \mbox{ or } \emptyset$.

For the quantitative definition of  nearly singularity, fix $\vartheta$ such that
\[ 0<\vartheta \le \inf\{ \theta\ :\  \theta \mbox{ is an angle of a triangle } K\in\Th,
  h>0  \}.  \]
Then  call a vertex $\V$ to be nearly singular if 
\begin{equation}\label{def:nsing}
    |\Theta -\pi| < \vartheta \quad \mbox{ for all } \Theta\in\Upsilon(\V),  
\end{equation}
otherwise regular.
We note that nearly singular vertices are isolated from each others
in the sense of the following lemma \cite{Park2020}.
\begin{lemma}\label{lem:isol-vtx}
  There is no interior edge connecting two nearly singular vertices.
\end{lemma}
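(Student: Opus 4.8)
The plan is to argue by contradiction: suppose an interior edge $E$ joins two nearly singular vertices $\V$ and $\W$. The geometric picture is that $E$ is one edge shared by exactly two triangles, say $K$ and $K'$, and both $\V,\W$ are endpoints of $E$. I want to extract from the near-singularity of $\V$ and of $\W$ a constraint on the angles of $K$ and $K'$ that is impossible for genuine (nondegenerate) triangles. First I would set up notation: at $\V$, the edge $E$ lies between two triangles $K$ and $K'$, contributing a pair $\theta_\V(K)+\theta_\V(K')\in\Upsilon(\V)$, so \eqref{def:nsing} forces $|\theta_\V(K)+\theta_\V(K')-\pi|<\vartheta$; symmetrically at $\W$ we get $|\theta_\W(K)+\theta_\W(K')-\pi|<\vartheta$. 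Adding these two inequalities yields
\[
  \bigl|\,\theta_\V(K)+\theta_\W(K) + \theta_\V(K')+\theta_\W(K') - 2\pi\,\bigr| < 2\vartheta .
\]
But in the triangle $K$ the two angles at the endpoints of its edge $E$ satisfy $\theta_\V(K)+\theta_\W(K) = \pi - \theta_K^{*}$, where $\theta_K^{*}$ is the angle of $K$ at its third vertex; likewise $\theta_\V(K')+\theta_\W(K') = \pi-\theta_{K'}^{*}$. Substituting, the displayed inequality becomes $\theta_K^{*}+\theta_{K'}^{*} < 2\vartheta$, hence $\theta_K^{*}<2\vartheta$ and $\theta_{K'}^{*}<2\vartheta$.

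Now I would invoke the choice of $\vartheta$. By construction $\vartheta\le\inf\{\theta : \theta \text{ an angle of a triangle } K\in\Th\}$, so every angle of every triangle is at least $\vartheta$; in particular $\theta_K^{*}\ge\vartheta$ and $\theta_{K'}^{*}\ge\vartheta$. This alone only gives $2\vartheta\le\theta_K^{*}+\theta_{K'}^{*}<2\vartheta$, a direct contradiction — so the two triangles cannot both have their apex angle squeezed below $2\vartheta$ while every angle is bounded below by $\vartheta$. Actually the contradiction is even cleaner: $\theta_K^{*}<2\vartheta$ combined with $\theta_K^{*}\ge\vartheta$ is not yet contradictory, but the \emph{sum} bound $\theta_K^{*}+\theta_{K'}^{*}<2\vartheta$ together with $\theta_{K'}^{*}\ge\vartheta$ forces $\theta_K^{*}<\vartheta$, contradicting $\theta_K^{*}\ge\vartheta$. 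Either way we reach a contradiction, which proves that no such edge $E$ exists.

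The only real subtlety — and the step I expect to need the most care — is the bookkeeping of which angle pairs actually lie in $\Upsilon(\V)$ and $\Upsilon(\W)$: I must make sure that the \emph{same} edge $E$ is the shared edge used in both near-singularity inequalities, and handle the (boundary-adjacent) case correctly, though since $E$ is assumed interior both $K$ and $K'$ genuinely exist and the pair $\theta_\V(K)+\theta_\V(K')$ is a legitimate element of $\Upsilon(\V)$ by its definition. One should also note that the definition of $\Upsilon(\V)$ only records sums across shared edges, so the argument uses exactly the data it is entitled to; no appeal to the full list of angles around $\V$ is needed. With that settled, the chain of elementary angle identities above closes the proof, and this is essentially the argument of \cite{Park2020}.
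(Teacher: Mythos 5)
Your argument is correct. The paper itself states this lemma without proof, citing \cite{Park2020}, so there is no in-paper proof to compare against; your blind reconstruction supplies a clean, self-contained argument from the definitions in Section~\ref{sec:def-sing}. The key chain — both endpoints nearly singular forces $|\theta_\V(K)+\theta_\V(K')-\pi|<\vartheta$ and $|\theta_\W(K)+\theta_\W(K')-\pi|<\vartheta$ for the two triangles $K,K'$ bordering the common interior edge; adding and using $\theta_\V(K)+\theta_\W(K)=\pi-\theta_K^*$, $\theta_\V(K')+\theta_\W(K')=\pi-\theta_{K'}^*$ gives $\theta_K^*+\theta_{K'}^*<2\vartheta$; the choice of $\vartheta$ below the minimum mesh angle gives $\theta_K^*,\theta_{K'}^*\ge\vartheta$, hence $\theta_K^*+\theta_{K'}^*\ge 2\vartheta$ — closes the contradiction. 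The only stylistic nit is the middle paragraph's detour (``Actually the contradiction is even cleaner\dots''): the first displayed contradiction $2\vartheta\le\theta_K^*+\theta_{K'}^*<2\vartheta$ already finishes the proof, and the alternate route adds nothing. You correctly note that interiority of the edge guarantees both bordering triangles exist, so that the sum $\theta_\V(K)+\theta_\V(K')$ is indeed an element of $\Upsilon(\V)$ (and likewise at $\W$), which is the one place where bookkeeping could have gone wrong.
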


\subsection{clustering sting functions by vertex}
For each vertex $\V$, let $\ST(\V)$ be the space of all sting
functions of $\V$, that is,
\begin{equation}\label{def:STV}
  \ST(\V)= <\st_{\V K_1},\st_{\V K_2},\cdots, \st_{\V K_J}>,
\end{equation}
where $K_1, K_2, \cdots, K_J$ are all triangles in $\Th$ sharing $\V$.
An example of a function in $\ST(\V)$ is represented in Figure \ref{fig:phv_alpha}.
The support of a function in $\ST(\V)$ belongs to $\KK(\V)$.
\begin{figure}[ht]
\centering
\includegraphics[width=0.4\linewidth]{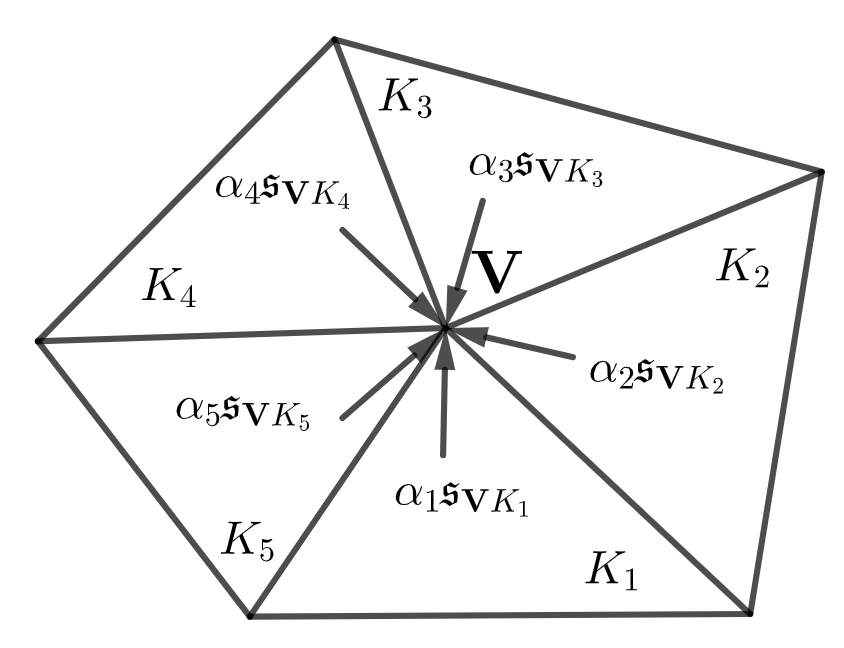}
\caption{the support of
  $q_h^\V=\alp_1\st_{\V K_1}+\alp_2\st_{\V K_2}+\cdots+ \alp_5\st_{\V K_5}\in\ST(\V)$}
\label{fig:phv_alpha}
\end{figure}

Then we note that 
\[ \ST_h =\bigoplus_{K\in\Th} \ST(K)= \bigoplus_{\V: \mbox{vertex}} \ST(\V).       \]
Thus the sting functions forming $\pihp^\ST$ of $\pihp$ in \eqref{eq:decom-pihp},
\eqref{def:decom-pihp}
can be clustered by vertex so that
\begin{equation}\label{eq:decom-pihp-st}
  \pihp^\ST= \sum_{\V: \mbox{vertex}} \pihp^\V \quad \mbox{ for } \pihp^\V\in\ST(\V).  
\end{equation}

If a vertex $\V$ does not meet any interior edge as in Figure \ref{fig:dead},
$\V$ is called dead, otherwise, ordinary.
Then, all vertices are classified into 3 classes: 
regular vertices, nearly singular ordinary vertices and dead corners
as in Figure \ref{fig:reg}, \ref{fig:sing}, \ref{fig:dead}, respectively.

In the next 3 sections, we will define the sting component $\ph^\V\in\ST(\V)$
for each vertex $\V$ in order of those 3 classes
to approximate $\pihp^\V$ in \eqref{eq:decom-pihp-st}.
The local functions in the following subsection will play roles of test functions
on defining $\ph^\V$.
  
\subsection{test functions on  two adjacent triangles}
Let $K_1, K_2$ be two adjacent triangles sharing an edge and a vertex $\V$
as in Figure \ref{fig:uniontwo}. Denote  other 3 vertices and a unit
tangent vector by $\W_0,\W_1,\W_2, \t$ so that
\[ \overline{\V\W_0}= K_1\cap K_2,\quad 
  \t = \frac{\Evec{\V}{\W_0}}{|\overline{\V\W_0}|},\quad
  \W_j\in K_j\setminus\{\V,\W_0\}, j=1,2. \]
\begin{figure}[ht]
  \centering
    \includegraphics[width=0.45\textwidth]{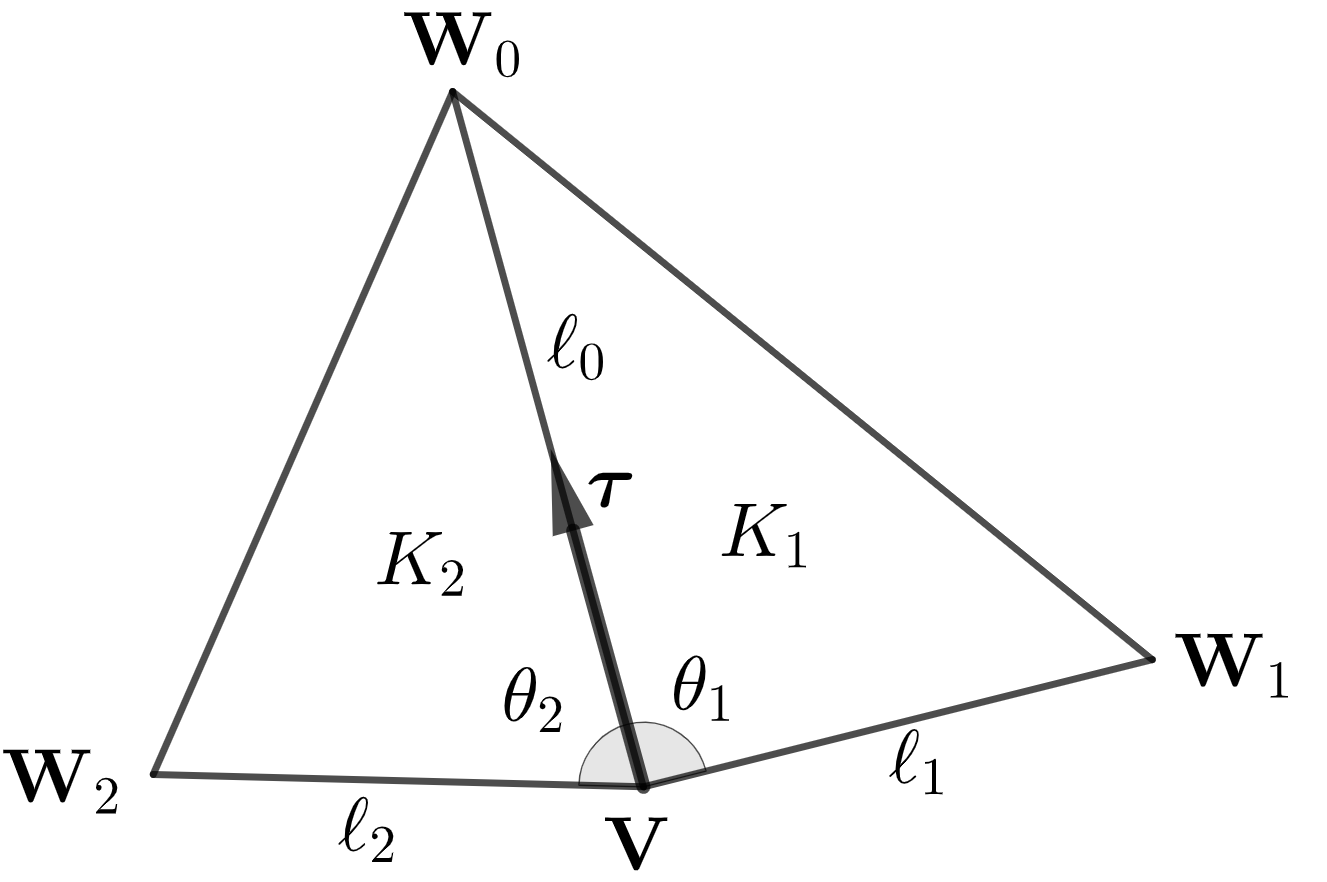} 
  \caption{The union of two adjacent triangles sharing $\V$}
  \label{fig:uniontwo}
\end{figure}

Then, there exists a function $w\in \mathcal{P}_h^4(\O)\cap H_0^1(\O)$
such that \cite{Park2020}
\begin{equation}\label{cond:w}
  \frac{\p w}{\p\t}(\V)=1,\ \frac{\p w}{\p\t} (\W_0)=0,\ \int_{\overline{\V\W_0}} w\ d\ell=0,\
   \mbox{ the support of  } w \mbox{ is } K_1\cup K_2.
\end{equation}
Assuming $K_1,K_2$ are counterclockwisely numbered with respect to $\V$,
by simple calculation, we have
  \begin{equation}\label{eq:nablaw12}
    \nabla w\big|_{K_1}(\V) =\frac{|\overline{\V\W_0}|}{2|K_1|}\
    {\Evec{\V}{\W_1}}^{\perp},\quad
    \nabla w\big|_{K_2}(\V) =-\frac{|\overline{\V\W_0}|}{2|K_2|}\
    {\Evec{\V}{\W_2}}^{\perp}.
  \end{equation}
  For a vector $\bxi=(\xi_1,\xi_2)$, denote  $\w_h^{\bxi}=w\bxi=(\xi_1w, \xi_2w)$.
  Then from \eqref{cond:w} and \eqref{eq:nablaw12}, $\div \w_h^{\bxi}$
  vanishes at all vertices in $\Th$ except
  \begin{equation}\label{eq:divwhval}
    \div\w_h^{\bxi}\big|_{K_1} (\V)=\frac{|\overline{\V\W_0}|}{2|K_1|}\ {\Evec{\V}{\W_1}}^{\perp}\cdot \bxi,\quad
    \div\w_h^{\bxi}\big|_{K_2} (\V)=-\frac{|\overline{\V\W_0}|}{2|K_2|}\ {\Evec{\V}{\W_2}}^{\perp}\cdot \bxi.
  \end{equation}

  Let $q_h \in \ST(K_1)\bigoplus\ST(K_2)$ be a sting function on $K_1\cup K_2$.
  It is
  represented with some constants $\alp_j, \bet_j,j=1,2,3$ as
  \[ q_h =\alp_1\stg{\V}{K_1} + \alp_2\stg{\W_0}{K_1}+
    \alp_3\stg{\W_1}{K_1} +\bet_1\stg{\V}{K_2} +
    \bet_2\stg{\W_0}{K_2}+ \bet_3\stg{\W_2}{K_2}.\]
  Then, by  \eqref{eq:divwhval} and quadrature rule of sting functions in \eqref{def:sting},
   we have that
  \begin{equation*}
     \resizebox{1\hsize}{!}
     {$
       \arraycolsep=1.0pt\def\arraystretch{2.5}
    \begin{array}{lll}
    (q_h, \div\w_h^{\t})
    &=&\alp_1\div\w_h^{\t}\big|_{K_1}(\V)\disp\frac{|K_1|}{100}+\bet_1\div\w_h^{\t}\big|_{K_2}(\V)\frac{|K_2|}{100}
    = \disp\frac{1}{200}\left(\alp_1{\Evec{\V}{\W_1}}^{\perp}
      -\bet_1{\Evec{\V}{\W_2}}^{\perp}\right) \cdot \Evec{\V}{\W_0},\\
     (q_h, \div\w_h^{\t^\perp})
      &=&\alp_1\div\w_h^{\t^\perp}\big|_{K_1}(\V)\disp\frac{|K_1|}{100}
          +\bet_1\div\w_h^{\t^\perp}\big|_{K_2}(\V)\disp\frac{|K_2|}{100}
     = \disp\frac{1}{200}
    \left(\alp_1{\Evec{\V}{\W_1}}^{\perp}-\bet_1{\Evec{\V}{\W_2}}^{\perp}\right) \cdot{\Evec{\V}{\W_0}}^\perp.
    \end{array}
    $}
  \end{equation*}
  It can be written in simpler form:
  \begin{equation}\label{eq:qhdivwh0}
    \arraycolsep=1.4pt\def\arraystretch{2.0}
    \begin{array}{lll}
      (q_h, \div\w_h^{\t}) &=&  \disp\frac{\ell_1\ell_0\sin\theta_1}{200}\alp_1
                               +\frac{\ell_0\ell_2\sin\theta_2}{200}\bet_1
                               = \frac1{100}\left(|K_1|\alp_1 +|K_2|\bet_1\right),\\
      (q_h, \div\w_h^{\t^\perp})&=& \disp\frac{\ell_1\ell_0\cos\theta_1}{200}\alp_1
                                    -\frac{\ell_0\ell_2\cos\theta_2}{200}\bet_1,
    \end{array}
  \end{equation}
where $\theta_j$ is the angle of $K_j$ at $\V$, $j=1,2$ and
$\ell_j=|\overline{\V\W_j}|, j=0,1,2$ as in Figure \ref{fig:uniontwo}.

\section{$\ph^\V$ for a regular vertex $\V$}\label{sec:sting-regular}
Let's fix a vertex $\V$ and $K_1, K_2, \cdots, K_J$ be all triangles in $\Th$
sharing $\V$,  counterclockwisely numbered as in Figure \ref{fig:reg}, \ref{fig:sing}.
Denote by $\JJ$, the number of interior edges which meet $\V$, that is,
\begin{equation} \label{def:JJ}
  \JJ=\left\{
    \begin{array}{ll}
      J,&\quad \mbox{ if } \V \mbox{ is an interior vertex, }\\
      J-1,&\quad \mbox{ if } \V \mbox{ is a boundary vertex. }
    \end{array}\right.                      
\end{equation}
We will use the indices modulo $J$, if $\V$ is an interior vertex.
Then, for $j=1,2,\cdots, \JJ$,
let $\V_j$ and  $\t_j$ be a vertex and a unit vector, respectively, such that
\begin{equation}\label{def:Vj}
  \overline{\V\V_j}=K_j\cap K_{j+1},\quad \t_j = \frac{\Evec{\V}{\V_j}}{|\overline{\V\V_j}|}.
\end{equation}
In case of boundary vertex $\V$ as in Figure \ref{fig:reg}-(b) and \ref{fig:sing}-(a),(b),
denote by $\V_0, \V_J$, the vertices such that
\begin{equation} \label{def:v0vJ}
  \V_0\in K_1\setminus\{\V,\V_1\},\quad  \V_J\in K_J\setminus\{\V,\V_{J-1}\}.
\end{equation}
\begin{figure}[ht]
  \centering
  \subfloat[interior regular $\V$, $J=\JJ=5$]{
    \includegraphics[width=0.45\textwidth]{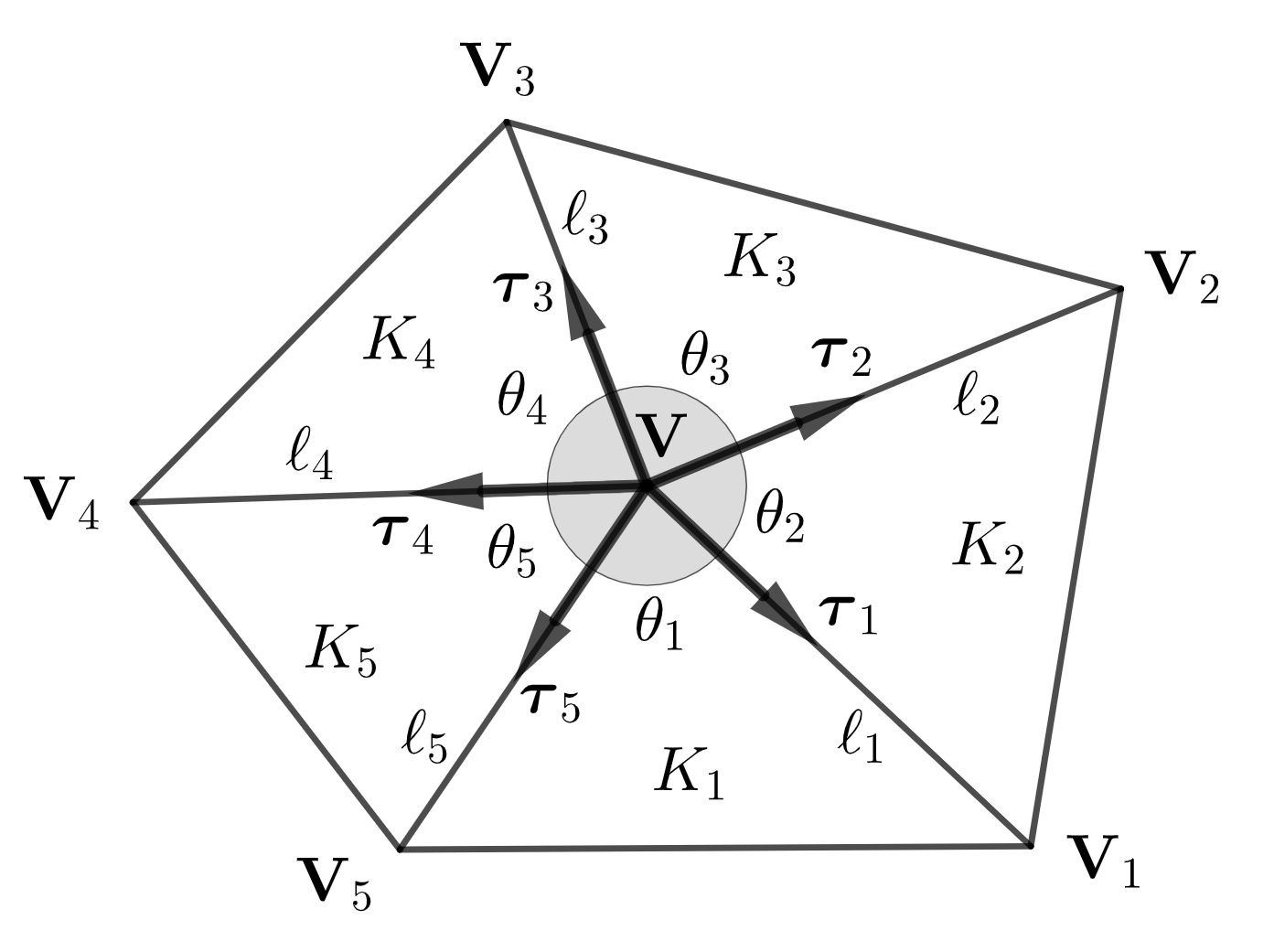} }\qquad
  \subfloat[ boundary regular $\V$, $J=3$, $\JJ=2$ ]{
    \includegraphics[width=0.36\textwidth]{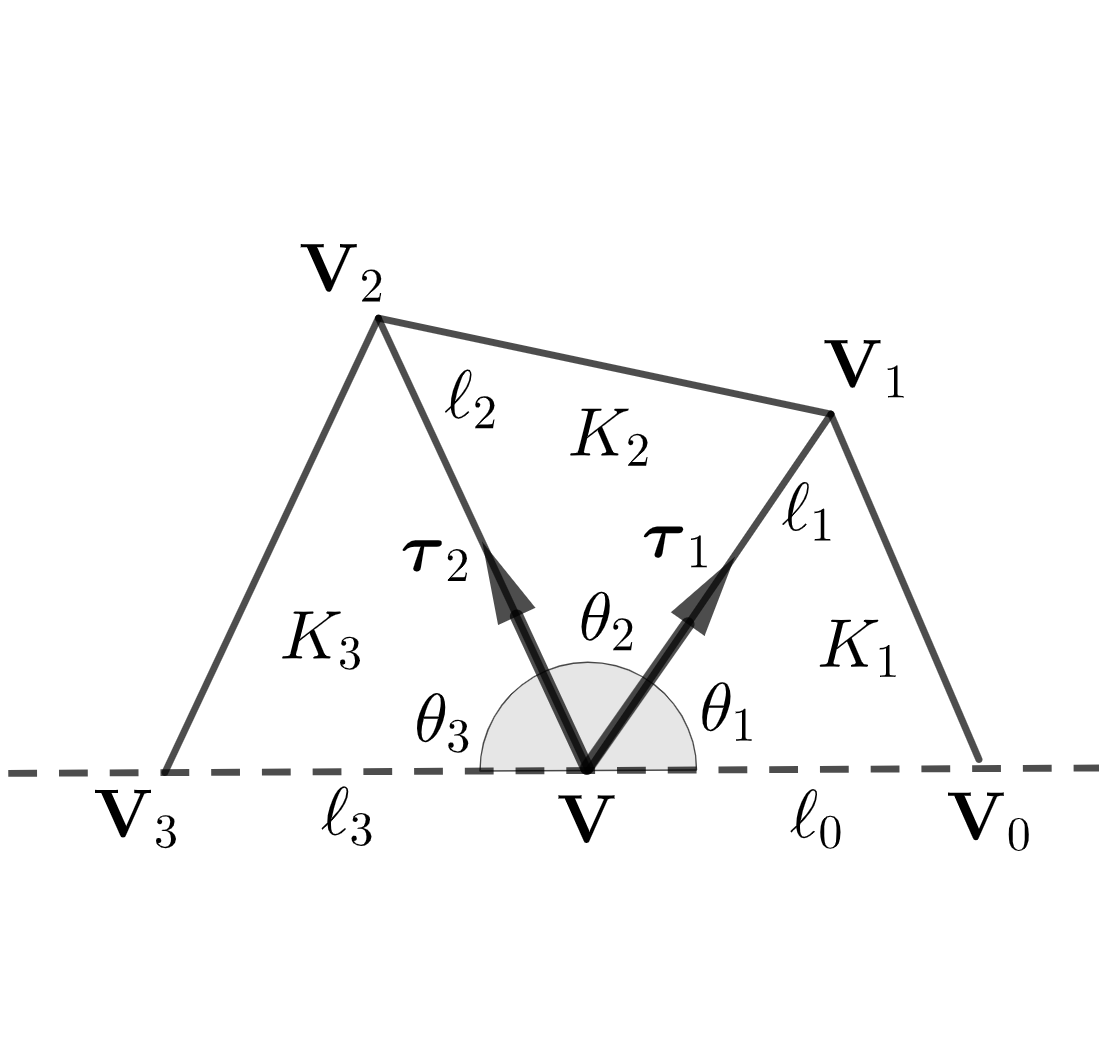}}
  \caption{examples of regular vertices $\V$ (dashed lines belong to $\p\O$)}
  \label{fig:reg}
\end{figure}

\subsection{least square solution of  a system by test functions}\label{sec:not}
For each  $j=1,2,\cdots, \JJ$, similarly to \eqref{cond:w}, there exists a function $w_j\in \mathcal{P}_h^4(\O)\cap H_0^1(\O)$
such that
\begin{equation}\label{cond:wk}
  \frac{\p w_j}{\p\t_j} (\V)=1,\  \frac{\p w_j}{\p\t_j}(\V_j)=0,\
  \int_{\overline{\V\V_j}} w_j\ d\ell=0,\
 \mbox{ the support of  } w_j \mbox{ is } K_j\cup K_{j+1}.
\end{equation}
For the uniqueness of $w_j$, we add the following conditions:
\begin{equation}\label{cond:wk-redun}
  w_j(\G)=0,\ \nabla w_j(\G)={\bf{0}} \mbox{ at each gravity center } \G \mbox{ of }
  K_j, K_{j+1}.
\end{equation}
Then we have $2\JJ$ test functions in $\Xh$ such that
\begin{equation}\label{def:wht}
  \w_h^{\t_j}=w_j\t_j,\quad
  \w_h^{\t_j^\perp}=w_j\t_j^\perp,\quad j=1,2,\cdots,\JJ.
\end{equation}

Consider the following system  of $2\JJ$ equations for unknown $q_h^\V\in \ST(\V)$:
\begin{equation}\label{eq:LSqhv}
  \arraycolsep=1.4pt\def\arraystretch{1.6}
    \begin{array}{ll}
  (q_h^\V, \div\w_h^{\t_j})&=a_j, \\
    (q_h^\V, \div\w_h^{\t_j^\perp})&= b_j,
    \end{array}
  \end{equation}
for given scalars $a_j, b_j, j=1,2,\cdots,\JJ$.
Since  $q_h^{\V}\in \ST(\V)$
 can be represented for $J$ unknown constants $\alp_1, \alp_2,\cdots,\alp_J$ as
 \begin{equation}\label{eq:qhrepre-a}
   q_h^\V=\alp_1\stg{\V}{K_1}+\alp_2\stg{\V}{K_2} +\cdots +
  \alp_J\stg{\V}{K_J},
\end{equation}
the system \eqref{eq:LSqhv} is of $2\JJ$ equations for $J$ unknowns
$\alp_1, \alp_2,\cdots,\alp_J$ in \eqref{eq:qhrepre-a}.

\begin{lemma}\label{lem:singval}
  Let  $q_h^\V\in \ST(\V)$ be the least square solution of the
 system \eqref{eq:LSqhv}.
 Then, if $\V$ is a regular vertex,  we have
\[ \|q_h^\V\|_0 \le   C |\KK(\V)|^{-1/2} \sum_{j=1}^{\JJ}(|a_j| + |b_j|).  \]
\end{lemma}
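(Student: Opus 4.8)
The plan is to reduce the estimate to a quantitative, scale-invariant statement about a fixed linear system on a patch, and then to use the regularity of $\V$ to bound the relevant least-squares operator. First I would rescale: by the affine invariance \eqref{eq:affine-sting} of sting functions and the fact that $|K_j| \sim |\KK(\V)|$ for every $j$ (shape regularity plus the finite bound $J \le C$), it suffices to prove the inequality with $|\KK(\V)| = 1$, i.e. on a patch of unit size; the factor $|\KK(\V)|^{-1/2}$ then reappears by tracking how $\|q_h^\V\|_0$ and the pairings $(q_h^\V,\div\w_h^{\t_j})$ scale under a dilation. On a unit patch, $\|q_h^\V\|_0 \sim |\boldsymbol{\alpha}|$ where $\boldsymbol{\alpha} = (\alp_1,\dots,\alp_J)$ is the coefficient vector in \eqref{eq:qhrepre-a}, because the $\st_{\V K_j}$ have disjoint supports up to the shared lower-dimensional faces; so it is enough to show $|\boldsymbol{\alpha}| \le C \sum_j (|a_j|+|b_j|)$ for the least-squares solution.

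Next I would make the linear system explicit. Using the computation \eqref{eq:qhdivwh0} applied to each pair $K_j, K_{j+1}$ — noting that $\div\w_h^{\t_j}$ is supported on $K_j \cup K_{j+1}$ and that, among the sting functions making up $q_h^\V$, only $\st_{\V K_j}$ and $\st_{\V K_{j+1}}$ pair nontrivially with it — the system \eqref{eq:LSqhv} becomes, after dividing each equation by the appropriate $O(1)$ geometric factor, a $2\JJ \times J$ system $A\boldsymbol{\alpha} = \mathbf{c}$ whose rows couple only consecutive coefficients $\alp_j, \alp_{j+1}$: schematically the first-type equations read $c_j \alp_j + c_{j+1}' \alp_{j+1} = (\text{const})\,a_j$ with positive coefficients, and the second-type equations read $d_j \alp_j - d_{j+1}' \alp_{j+1} = (\text{const})\,b_j$ with the crucial \emph{sign change}, where the ratio $d_j/c_j$ and $d_{j+1}'/c_{j+1}'$ involve $\cot\theta_j$-type quantities. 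The least-squares solution satisfies $\boldsymbol{\alpha} = (A^\top A)^{-1} A^\top \mathbf{c}$, so the claim is equivalent to the lower bound $\|A\boldsymbol{\alpha}\| \ge c\,|\boldsymbol{\alpha}|$, i.e. that the smallest singular value of $A$ is bounded below by a constant depending only on shape-regularity and $\vartheta$.

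The main obstacle — and the only place regularity of $\V$ enters — is precisely this lower bound on $\sigma_{\min}(A)$. I would argue by contradiction / compactness: if it failed, there would be a sequence of patches (with angles staying in the shape-regular range and the near-singular angle-defect quantities $|\Theta - \pi|$ controlled) and unit vectors $\boldsymbol{\alpha}^{(n)}$ with $A^{(n)}\boldsymbol{\alpha}^{(n)} \to 0$; passing to a limit patch and a limit $\boldsymbol{\alpha}$, one gets $A\boldsymbol{\alpha} = 0$ with $|\boldsymbol{\alpha}| = 1$. So it remains to show that on a non-degenerate patch the homogeneous system $A\boldsymbol{\alpha} = 0$ forces $\boldsymbol{\alpha} = 0$ whenever $\V$ is regular. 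The two families of equations together say, at each interior edge $\overline{\V\V_j}$, that a certain vector built from $\alp_j \Evec{\V}{\W_1_j}^\perp$ and $\alp_{j+1}\Evec{\V}{\W_2_j}^\perp$ vanishes in two independent directions — hence (as in \eqref{eq:qhdivwh0}) that $\alp_j \Evec{\V}{\W_1_j}^\perp = \alp_{j+1}\Evec{\V}{\W_2_j}^\perp$ as full vectors is \emph{not} what happens; rather one gets a scalar recursion $\alp_{j+1} = -\,\alp_j \,\frac{\ell_{j-1}\sin\theta_j}{\ell_{j+1}\sin\theta_{j+1}} \cdot (\text{something involving } \cos)$, and consistency of the two relations at edge $j$ is exactly the condition $\theta_j + \theta_{j+1} \ne \pi$, i.e. $\V$ not exactly singular — and the quantitative version near $\pi$ is the near-singularity threshold \eqref{def:nsing}, which by definition of \emph{regular} is excluded. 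For a boundary vertex one additionally uses the endpoint constraints coming from $\V_0, \V_J$ (the chain is open rather than cyclic, which only helps). Feeding $\vartheta$ into the compactness argument makes the constant $C$ depend only on shape-regularity and $\vartheta$, completing the proof; I would phrase the final write-up directly via the $\sigma_{\min}(A)$ bound rather than the limiting argument if an explicit determinant computation of the $2\times 2$ blocks turns out clean enough, but I expect the compactness route to be the cleanest.
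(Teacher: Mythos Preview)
Your overall reduction --- rewrite \eqref{eq:LSqhv} as a $2\JJ\times J$ linear system in the coefficients $\alp_1,\dots,\alp_J$ and bound the least-squares inverse via a lower bound on $\sigma_{\min}$ of the coefficient matrix --- is exactly the paper's strategy, but your kernel analysis contains a genuine confusion about what ``regular'' means. By \eqref{def:nsing}, regularity of $\V$ does \emph{not} say that $|\theta_j+\theta_{j+1}-\pi|\ge\vartheta$ at every interior edge; it says only that this holds for \emph{some} adjacent pair. So you cannot argue edge by edge that the $2\times 2$ block with determinant $-\sin(\theta_j+\theta_{j+1})$ is nonsingular and hence $\alp_j=\alp_{j+1}=0$; at most edges that determinant may be tiny or zero even for a regular vertex. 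The correct argument (which the paper carries out directly, without compactness) is: after the substitution $\bet_j=\ell_{j-1}\ell_j\alp_j/200$, take the one index --- say $j=1$ --- with $|\sin(\theta_1+\theta_2)|\ge C(\vartheta)$, use the $2\times 2$ block there to solve for $\bet_1,\bet_2$, and then march through the remaining \emph{first-type} equations $\sin\theta_j\bet_j+\sin\theta_{j+1}\bet_{j+1}=a_j$ for $j=2,\dots,J-1$ consecutively, which is stable because each $\sin\theta_j\ge C$ by shape regularity alone. This gives an explicit $J\times J$ row-submatrix with uniformly bounded inverse, and $\sigma_{\min}$ of the full tall system dominates that of any row-submatrix.

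Your compactness route can be salvaged once the kernel computation is fixed in this way --- the limit patch remains regular because $\vartheta$ is a fixed threshold --- but it is strictly less informative than the paper's direct construction, which yields the constant explicitly in terms of $\vartheta$ and the shape-regularity parameter and avoids the bookkeeping of extracting subsequences with fixed $J$. Since you already anticipate in your last sentence that an explicit $2\times 2$ computation might be cleaner: it is, and combined with the chaining step above it \emph{is} the whole proof. I would drop the compactness framing.
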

\begin{proof}
  Let 
 $\theta_j$ be the angle of $K_j$ at $\V$, $j=1,2,\cdots,J$
 and $\ell_j=|\overline{\V\V_j}|, j=0,1,\cdots,J$ as in Figure \ref{fig:reg}.
  Then
  from \eqref{eq:qhdivwh0}, \eqref{cond:wk}, \eqref{def:wht} and \eqref{eq:qhrepre-a},
  we can rewrite \eqref{eq:LSqhv} for $j=1,2,\cdots,\JJ$ as
  \begin{equation}\label{eq:srsys0q}
    \arraycolsep=1.6pt\def\arraystretch{2.2}
   \begin{array}{ll}
   \disp\frac{\ell_{j-1}\ell_j\sin\theta_j}{200}\alp_j &+\disp \frac{\ell_{j}\ell_{j+1}\sin\theta_{j+1}}{200}\alp_{j+1}=a_j,\\ 
      \disp\frac{\ell_{j-1}\ell_j\cos\theta_j}{200}\alp_j &- \disp\frac{\ell_{j}\ell_{j+1}\cos\theta_{j+1}}{200}\alp_{j+1}
                                           =b_j. 
   \end{array}
 \end{equation}
 Denote
 \begin{equation}\label{eq:lem-bet}
   \bet_j=\ell_{j-1}\ell_j\alp_j/200 \quad\mbox{ for } j=1,2,\cdots,J.
 \end{equation}
 Then for $j=1,2,\cdots,\JJ$, we simplify \eqref{eq:srsys0q} into
  \begin{subequations}\label{eq:srsys1q}
 \begin{align}
   \sin\theta_j\bet_j &+ \sin\theta_{j+1}\bet_{j+1}= a_j, \label{eq:srsys1q-a}\\
  \cos\theta_j\bet_j &- \cos\theta_{j+1}\bet_{j+1}=b_j. \label{eq:srsys1q-b}   
   \end{align}
 \end{subequations}
 
 If $\V$ is a regular vertex, then by \eqref{def:nsing}, we can assume without loss of generality,
 \begin{equation}\label{eq:sinth12}
  0< C \le |\sin(\theta_1+\theta_2)|,
 \end{equation}
 which tells the bound of the determinant of two equations \eqref{eq:srsys1q-a},  \eqref{eq:srsys1q-b} for $j=1$.

 Consider the following subsystem of \eqref{eq:srsys1q} consisting of $J$ equations:
 \begin{subequations}\label{eq:srsys2}
 \begin{align}
   \sin\theta_1 x_1 &+ \sin\theta_{2} x_{2}= a_1 ,\label{eq:srsys2-a}\\
  \cos\theta_1 x_1 &- \cos\theta_{2} x_{2}=b_1,\label{eq:srsys2-b}\\   
   \sin\theta_j x_j &+ \sin\theta_{j+1} x_{j+1}= a_j ,\quad j=2,3,\cdots,J-1.
                        \label{eq:srsys2-c}
 \end{align}
 \end{subequations}
 We can solve first \eqref{eq:srsys2-a},\eqref{eq:srsys2-b} to get  $x_1, x_2$.
 Then, solve \eqref{eq:srsys2-c} consecutively to obtain
 \[  x_{j+1}=(\sin\theta_{j+1})^{-1}(a_j-\sin\theta_j x_j),\quad j=2,3,\cdots,J-1.\]
 From \eqref{eq:sinth12} and $ 0<C \le \sin\theta_j, j=1,2,\cdots,J$ by shape-regularity,
 we have
 \begin{equation}\label{eq:singval-bet-bd}
\|(x_1, x_2,\cdots,x_J)\|_2 \le C \|(b_1, a_1,a_2,\cdots,a_{J-1})\|_2. 
 \end{equation}
 
 Let $A\in \R^{J\times J}$ be the matrix for the system \eqref{eq:srsys2}. Then
 from \eqref{eq:singval-bet-bd}, we deduce that
 \begin{equation}\label{eq:singval-inv-A}
   \|A^{-1} \y \|_2 \le C \|\y\|_2\quad \mbox{ for all } \y\in \R^J.
 \end{equation}
 Thus, for the smallest singular value $s$ of $A$, we can estimate the following from \eqref{eq:singval-inv-A}:
 \begin{equation}\label{eq:smallest-s}
   s=\min_{\x\in\R^J\setminus\{\mathbf{0}\}} \frac{\|A\x\|_2}{\|\x\|_2}
   = \min_{\y\in\R^J\setminus\{\mathbf{0}\}}  \frac{\|\y\|_2}{\|A^{-1}\y\|_2} \ge \frac1{C}. 
 \end{equation}
 
 Since
 the smallest singular value of a matrix is greater than that of its submatrix,
 we can estimate
 \begin{equation}\label{eq:lem-qhbdd-0}
   \|(\bet_1,\bet_2,\cdots,\bet_J)\|_2 \le \frac1s
   \|(a_1,a_2,\cdots,a_{\JJ},b_1,b_2,\cdots,b_{\JJ})\|_2,
 \end{equation}
 for the least square solution of \eqref{eq:srsys1q}.
From \eqref{eq:exp-sting} and  \eqref{eq:affine-sting}, we note that
\begin{equation}
  \label{eq:est-sting0}
  \|\st_{\V K_j}\|_0 \le C|K_j|^{1/2}, \quad j=1,2,\cdots, J.
\end{equation}
Thus, combining \eqref{eq:qhrepre-a}, \eqref{eq:lem-bet}, \eqref{eq:smallest-s}-\eqref{eq:est-sting0}, the proof is completed.
\end{proof}

\subsection{definition of $\ph^\V$ for a regular vertex $\V$}
We test \eqref{eq:pihp-0} with  $\w_h^{\t_j}, \w_h^{\t_j^\perp}$ in \eqref{def:wht}.
Then for $\pihp^\V$  in \eqref{eq:decom-pihp-st}, we have 
 \begin{equation}\label{eq:Piphvtk}
    \arraycolsep=1.2pt\def\arraystretch{1.8}
    \begin{array}{l}
   \left(\Pi_hp^\V, \div\w_h^{\t_j}\right)=(\f,\w_h^{\t_j})-(\nabla\u, \nabla\w_h^{\t_j})
   -\left(\Pi_hp^\NS, \div\w_h^{\t_j}\right)-(p-\Pi_hp,\div\w_h^{\t_j}),\\
  \left( \Pi_hp^\V, \div\w_h^{\t_j^\perp}\right)
  =(\f,\w_h^{\t_j^\perp})-(\nabla\u, \nabla\w_h^{\t_j^\perp})
   -\left(\Pi_hp^\NS, \div\w_h^{\t_j^\perp}\right)-(p-\Pi_hp,\div\w_h^{\t_j^\perp}),
    \end{array}
 \end{equation}
 for $j=1,2,\cdots, \JJ$, since $\div\w_h^{\t_j}, \div\w_h^{\t_j^\perp}$ vanish
 at all vertices except $\V$ and
 \[ (1,\div\w_h^{\t_j})_K =  (1,\div\w_h^{\t_j^\perp})_K=0\quad \mbox{ for all } K\in\Th.\]

Reflecting \eqref{eq:Piphvtk}, create the following system of $2\JJ$ equations for unknown $\ph^ \V\in \ST(\V)$:
\begin{subequations}\label{eq:LSphv}
  \begin{align}
  (p_h^\V, \div\w_h^{\t_j})&=(\f,\w_h^{\t_j})-(\nabla\u_h, \nabla\w_h^{\t_j})-(p_h^\NS, \div\w_h^{\t_j}),\label{eq:LSphv-a}\\
    (p_h^\V, \div\w_h^{\t_j^\perp})&=(\f,\w_h^{\t_j^\perp})-(\nabla\u_h, \nabla\w_h^{\t_j^\perp})-(p_h^\NS, \div\w_h^{\t_j^\perp}),\label{eq:LSphv-b}
  \end{align}
\end{subequations}
for $j=1,2,\cdots,\JJ$, where $\ph^\NS$ is the non-sting component, already defined
in Lemma \ref{lem:phns-error}.

\begin{lemma}\label{lem:error-sr0}
  If $\V$ is a regular vertex as in Figure \ref{fig:reg}, define $p_h^\V\in \ST(\V)$
  as the least square solution of \eqref{eq:LSphv}.
  Then, for $\pihp^\V$ in \eqref{eq:decom-pihp-st}, we estimate
  \begin{equation}\label{eq:lem-error-phv}
    \|\Pi_hp^\V-p_h^\V\|_{0} \le C
    \left(|\u-\u_h|_{1,\KK(\V)} 
      +\|p-\Pi_hp\|_{0,\KK(\V)}\right).
  \end{equation}
\end{lemma}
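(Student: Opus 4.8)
The plan is to combine the stability estimate of Lemma~\ref{lem:singval} with the consistency identity \eqref{eq:Piphvtk} in the usual Galerkin-error fashion. First I would subtract \eqref{eq:LSphv} from \eqref{eq:Piphvtk} term by term. Writing $e_h^\V=\Pi_hp^\V-p_h^\V\in\ST(\V)$, the difference of the right-hand sides produces, for each $j=1,2,\dots,\JJ$,
\[
  (e_h^\V,\div\w_h^{\t_j}) = a_j,\qquad (e_h^\V,\div\w_h^{\t_j^\perp}) = b_j,
\]
where
\[
  a_j = -(\nabla\u-\nabla\u_h,\nabla\w_h^{\t_j}) - (p-\Pi_hp,\div\w_h^{\t_j}) - (\Pi_hp^\NS - p_h^\NS,\div\w_h^{\t_j}),
\]
and similarly for $b_j$ with $\t_j$ replaced by $\t_j^\perp$. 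Since $e_h^\V$ is the difference of two least-square solutions of systems of the form \eqref{eq:LSqhv} with the same matrix (the matrix depends only on the geometry of $\KK(\V)$, not on the data), $e_h^\V$ is itself the least-square solution of \eqref{eq:LSqhv} with data $(a_j,b_j)$. Hence Lemma~\ref{lem:singval}, valid because $\V$ is regular, applies directly and gives
\[
  \|e_h^\V\|_0 \le C|\KK(\V)|^{-1/2}\sum_{j=1}^{\JJ}(|a_j|+|b_j|).
\]

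Next I would bound each $|a_j|$ and $|b_j|$. Each is a sum of three $L^2$ inner products over $K_j\cup K_{j+1}\subset\KK(\V)$, so Cauchy--Schwarz gives
\[
  |a_j| \le |\u-\u_h|_{1,\KK(\V)}\,\|\nabla\w_h^{\t_j}\|_{0} + \|p-\Pi_hp\|_{0,\KK(\V)}\,\|\div\w_h^{\t_j}\|_0 + \|\Pi_hp^\NS-p_h^\NS\|_{0,\KK(\V)}\,\|\div\w_h^{\t_j}\|_0,
\]
and likewise for $|b_j|$. The test functions $w_j$ satisfy the normalization \eqref{cond:wk}--\eqref{cond:wk-redun}; a scaling argument on the reference configuration (using shape-regularity and $|\p w_j/\p\t_j|(\V)=1$) yields $\|\nabla\w_h^{\t_j}\|_0 \le C$ and $\|\div\w_h^{\t_j}\|_0\le C$ — indeed $w_j$ scales like a fixed $O(1)$ profile in the gradient, so its $H^1$ seminorm over the two triangles is $O(|\KK(\V)|^{1/2}\cdot|\KK(\V)|^{-1/2})=O(1)$, up to the shape-regularity constant. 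Summing over the $\JJ\le C$ indices (the number of triangles at $\V$ is bounded by shape-regularity), the factor $|\KK(\V)|^{-1/2}$ from Lemma~\ref{lem:singval} cancels against the $|\KK(\V)|^{1/2}$ hidden in the estimates and we obtain
\[
  \|e_h^\V\|_0 \le C\left(|\u-\u_h|_{1,\KK(\V)} + \|p-\Pi_hp\|_{0,\KK(\V)} + \|\Pi_hp^\NS-p_h^\NS\|_{0,\KK(\V)}\right).
\]

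Finally I would absorb the third term using Lemma~\ref{lem:phns-error}: summing \eqref{eq:lem-phns} over the finitely many triangles $K_1,\dots,K_J$ making up $\KK(\V)$ gives $\|\Pi_hp^\NS-p_h^\NS\|_{0,\KK(\V)}\le C(|\u-\u_h|_{1,\KK(\V)}+\|p-\Pi_hp\|_{0,\KK(\V)})$, which is already of the desired form. Combining yields \eqref{eq:lem-error-phv}. The main obstacle I anticipate is the bookkeeping in the scaling argument for $\|\nabla\w_h^{\t_j}\|_0$ and $\|\div\w_h^{\t_j}\|_0$: one must check carefully that the normalization \eqref{cond:wk}, which fixes a \emph{derivative} value at $\V$ rather than a function value, produces exactly the power of $|\KK(\V)|$ needed to cancel the $|\KK(\V)|^{-1/2}$ in Lemma~\ref{lem:singval}, and that the extra interior conditions \eqref{cond:wk-redun} do not spoil this (they only pin down the otherwise-free part of $w_j$ inside each triangle and are harmless under shape-regularity). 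Everything else is a routine Galerkin-orthogonality computation.
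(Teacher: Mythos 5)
Your proposal follows the same route as the paper's proof: subtract the discrete system \eqref{eq:LSphv} from the exact system \eqref{eq:Piphvtk}, argue via normal equations that $e_h^\V=\Pi_hp^\V-p_h^\V$ is the least-squares solution of the difference system, invoke the stability estimate of Lemma~\ref{lem:singval} (valid because $\V$ is regular), bound the residuals by Cauchy--Schwarz, and absorb the non-sting error via Lemma~\ref{lem:phns-error}. That outline is correct and is exactly what the paper does.

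There is, however, an internal inconsistency in your scaling of the test functions. You assert $\|\nabla\w_h^{\t_j}\|_0 \le C$ (an $O(1)$ bound, with the parenthetical calculation $O(|\KK(\V)|^{1/2}\cdot|\KK(\V)|^{-1/2})=O(1)$), yet two lines later you rely on a cancellation ``the factor $|\KK(\V)|^{-1/2}$ from Lemma~\ref{lem:singval} cancels against the $|\KK(\V)|^{1/2}$ hidden in the estimates.'' Both statements cannot hold: if the norm were truly $O(1)$ there would be nothing to cancel and the final bound would carry a spurious $|\KK(\V)|^{-1/2}$ blow-up. The correct scaling is the paper's \eqref{eq:norm-wh}: $|\w_h^{\t_j}|_1 \le C(|K_j|+|K_{j+1}|)^{1/2}\sim C|\KK(\V)|^{1/2}$. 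Indeed, the normalization \eqref{cond:wk} fixes a \emph{derivative} value at $\V$, so $\nabla w_j=O(1)$ pointwise; integrating $|\nabla w_j|^2$ over the $O(|\KK(\V)|)$-area support then yields $|w_j|_{1}=O(|\KK(\V)|^{1/2})$, not $O(1)$---the extra factor $|\KK(\V)|^{-1/2}$ in your parenthetical has no source. With that one correction the cancellation you anticipate is real, and the proof coincides with the paper's.
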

\begin{proof}

If we denote the error by $e_h^\V=\Pi_hp^\V - p_h^\V$,
then from \eqref{eq:Piphvtk} and \eqref{eq:LSphv}, $e_h^\V\in \ST(\V)$ is the least square solution of the following system of $2\JJ$ equations:
\begin{equation}\label{eq:ehvtk}
  \arraycolsep=1.4pt\def\arraystretch{1.6}
  \begin{array}{l}
   (e_h^\V, \div\w_h^{\t_j})=-\left(\nabla(\u-\u_h), \nabla\w_h^{\t_j}\right)
   -\left(\Pi_hp^\NS-p_h^\NS, \div\w_h^{\t_j}\right)-(p-\Pi_hp,\div\w_h^{\t_j}),\\
   (e_h^\V, \div\w_h^{\t_j^\perp})=-\left(\nabla(\u-\u_h), \nabla\w_h^{\t_j^\perp}\right)
   -\left(\Pi_hp^\NS-p_h^\NS, \div\w_h^{\t_j^\perp}\right)
   -(p-\Pi_hp,\div\w_h^{\t_j^\perp}),
  \end{array}
\end{equation} 
for $j=1,2,\cdots, \JJ$, since the least square solution  of a system is the solution
of its normal equation and $\pihp^\V$ can be regarded as the solution of the normal equation of \eqref{eq:Piphvtk}.

By definition of $\w_h^{\t_j}, \w_h^{\t_j^\perp}$ in \eqref{cond:wk}-\eqref{def:wht},
 we note that
 \begin{equation}\label{eq:norm-wh}
   |\w_h^{\t_j}|_1,\ |\w_h^{\t_j^\perp}|_1 \le C(|K_j|+|K_{j+1}|)^{1/2},\quad j=1,2,\cdots,\JJ. 
 \end{equation}
 Thus \eqref{eq:lem-error-phv} comes from \eqref{eq:ehvtk}, \eqref{eq:norm-wh}
and Lemma \ref{lem:phns-error}, \ref{lem:singval}.
\end{proof}

\section{$\ph^\V$ for a nearly singular vertex $\V$, not a dead corner}
\label{sec:sting-sing}
When a vertex $\V$ is exactly singular, the system \eqref{eq:LSphv} is underdetermined, since
the determinant of \eqref{eq:srsys1q-a},  \eqref{eq:srsys1q-b} for each $j=1,2,\cdots,\JJ$,  makes
\[ -\sin(\theta_j +\theta_{j+1}) =-\sin\pi=0.\]
Although $\V$ is not exactly singular,  if it is nearly singular,
the error $\Pi_hp^\V-p_h^\V$  in Lemma \ref{lem:error-sr0}
might be large from the tiny smallest singular value of \eqref{eq:ehvtk}.

To overcome the problem on nearly singular vertices,
we will replace the equations in \eqref{eq:LSphv-b}
with new ones
utilizing jumps of $\ph^\NS$  and $\ph^{\V}$
for regular vertices $\V$
defined in Lemma \ref{lem:phns-error} and \ref{lem:error-sr0}, respectively.

\subsection{jump of tangential derivative}
For simple motivation, let's fix a boundary vertex $\V$
which is not a corner point of $\p\O$. If $\V$ is nearly singular,
it is exact singular and has only two triangles $K_1, K_2$ which share $\V$ as in Figure \ref{fig:sing}-(a).
\begin{figure}[h]
  \centering
    \subfloat[boundary singular non-corner $\V$]{
    \includegraphics[width=0.3\textwidth]{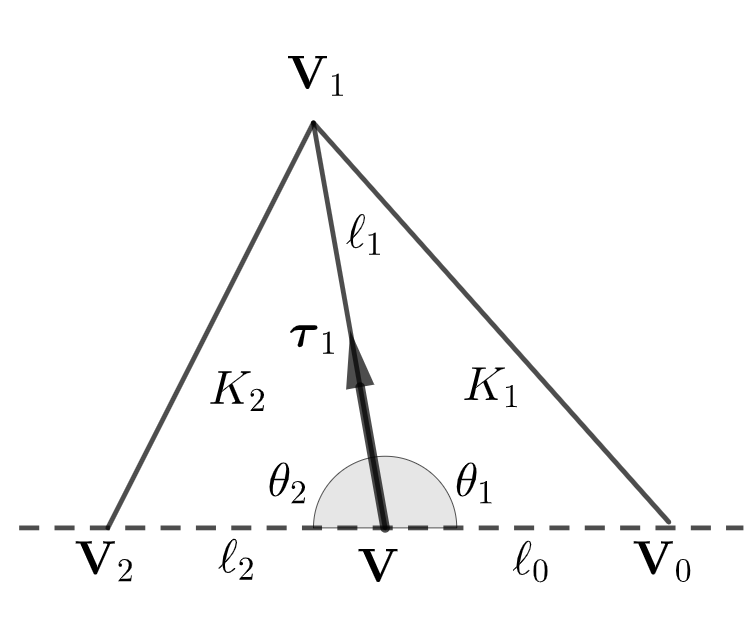} }
   \subfloat[nearly singular corner $\V$, not dead]{
     \includegraphics[width=0.33\textwidth]{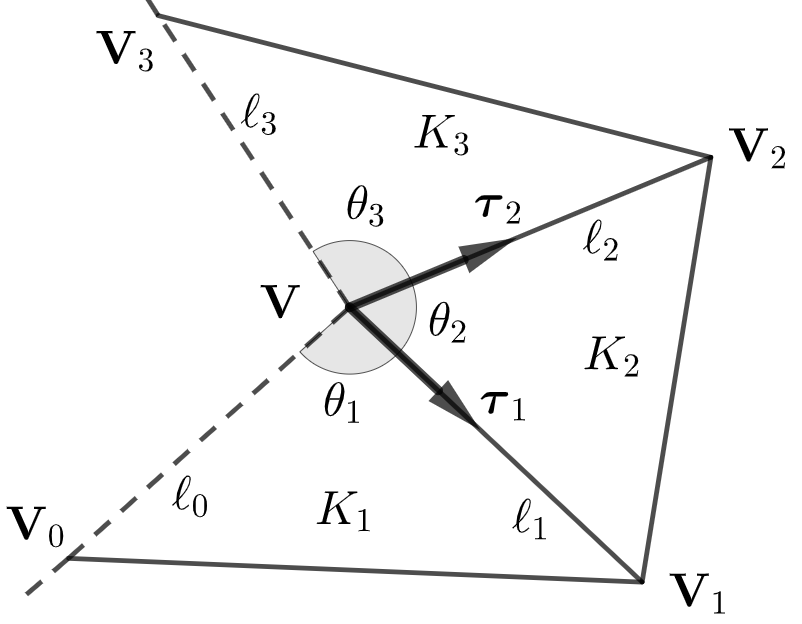}}
    \subfloat[interior nearly singular $\V$]{
    \includegraphics[width=0.33\textwidth]{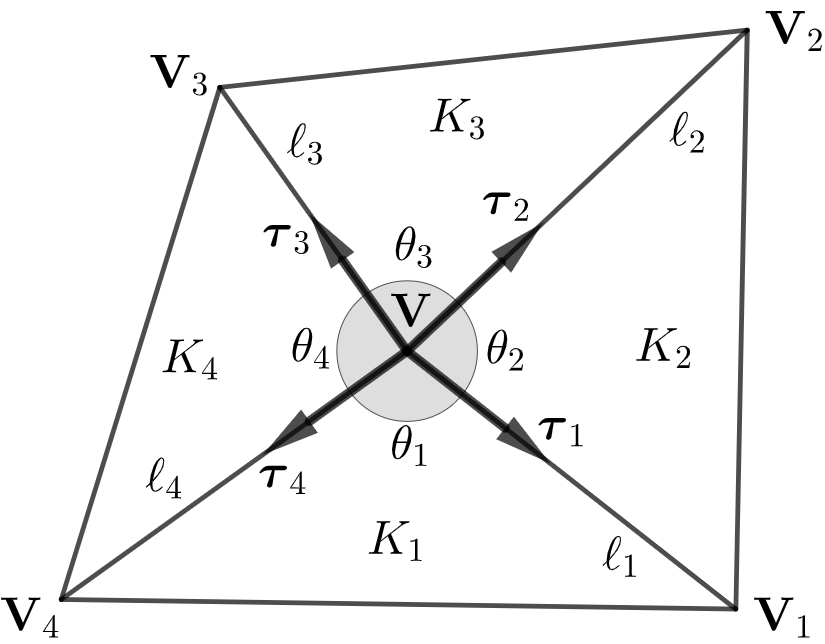}}
  \caption{examples of nearly singular ordinary vertices $\V$ (dashed lines belong to $\p\O$) }
  \label{fig:sing}
\end{figure}

Then,
since the system \eqref{eq:LSphv} is singular, in order to define $\ph^\V\in\ST(\V)$ approximating $\pihp^\V$, we have to create a new equation for $\ph^\V$, reflecting some condition
for  $\pihp^\V$.

Define a jump of a function $q_h$   at $\V$ across $K_1\cap K_2$ as
\begin{equation}\label{def:jump12}
  \jump_{12}(q_h)
  ={|K_1\cap K_2|^3}\left(\frac{\p}{\p \t_1} \left(q_h\big|_{K_1}\right) (\V) - \frac{\p}{\p \t_1}\left(q_h\big|_{K_2}\right) (\V)\right).
\end{equation}
Then, since $\pihp$ is continuous on $K_1\cap K_2$, we have $\jump_{12}(\pihp)=0$. It is written in
\begin{equation}\label{eq:jump-php}
\jump_{12}(\pihp)=  \jump_{12}\left( \pihp^\NS+  \pihp^{\V} +  \pihp^{\V_0} +  \pihp^{\V_1} +  \pihp^{\V_2}
  \right)=0.
\end{equation}

We note that $\stg{\V_0}{K_1}$, $\stg{\V_2}{K_2}$ are constant
on $K_1\cap K_2$ from \eqref{eq:affine-sting}, \eqref{eq:value-sting}.
 It results in
\begin{equation}\label{eq:jump-php-svb}
  \jump_{12}\left(  \pihp^{\V_0} \right) = \jump_{12}\left(  \pihp^{\V_2} \right)=0.
\end{equation}
Thus, from \eqref{eq:jump-php} and \eqref{eq:jump-php-svb}, $\pihp^\V$ satisfies
\begin{equation}\label{eq:bdy-sing-0}
  \jump_{12}\left(\pihp^\V\right)= -\jump_{12}\left(\pihp^\NS+\pihp^{\V_1}\right).
\end{equation}

We note $\V_1$ is a regular vertex by Lemma \ref{lem:isol-vtx}, 
since
$\V_1$ and $\V$ are connected by an interior edge.
It means that we have $\ph^{\V_1}$, already defined in Lemma \ref{lem:error-sr0}.

Thus we can impose a following new condition for unknown $\ph^\V$,
which is similar to \eqref{eq:bdy-sing-0}:

\begin{equation}\label{eq:jump-ph-simil}
  \jump_{12}(\ph^\V) = -\jump_{12}(\ph^\NS+p_h^{\V_1}).
\end{equation}
Then, replacing \eqref{eq:LSphv-b} with \eqref{eq:jump-ph-simil},
consider the following system for unknown $\ph^\V\in\ST(\V)$:
  \begin{equation}\label{eq:SSbdphv} \arraycolsep=1.4pt\def\arraystretch{1.6}
  \begin{array}{rl}
  (p_h^\V, \div\w_h^{\t_1})&=(\f,\w_h^{\t_1})-(\nabla\u_h, \nabla\w_h^{\t_1})-(p_h^\NS, \div\w_h^{\t_1}),\\
   \jump_{12} (p_h^\V) &= -\jump_{12}(\ph^{\nsg{}{}}+p_h^{\V_1}).
  \end{array}
\end{equation}
\begin{lemma}\label{lem:phv-error-svb}
  If $\V$ is a nearly singular vertex on  $\p\O$, not a corner as in Figure \ref{fig:sing}-(a),
  define $p_h^\V\in\ST(\V)$ as the solution of \eqref{eq:SSbdphv}.
  Then,  for $\pihp^\V$ in \eqref{eq:decom-pihp-st}, we estimate
\begin{equation}\label{eq:lem-error-phvs-svb}
    \|\Pi_hp^\V-p_h^\V\|_{0} \le C
    \left(|\u-\u_h|_{1,\KK(\V_1)} +\|p-\Pi_hp\|_{0,\KK(\V_1)}\right),
  \end{equation}
  where $\V_1$ is a regular vertex which shares an interior edge with $\V$.
\end{lemma}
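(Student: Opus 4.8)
The plan is to mimic the proof of Lemma~\ref{lem:error-sr0}, but with the second row of equations replaced by the jump condition. First I would set $e_h^\V=\pihp^\V-\ph^\V\in\ST(\V)$ and subtract the system \eqref{eq:SSbdphv} from the corresponding exact relations: the first equation of \eqref{eq:Piphvtk} with $j=1$ gives, after subtracting \eqref{eq:SSbdphv}${}_1$ exactly as in \eqref{eq:ehvtk},
\[
 (e_h^\V,\div\w_h^{\t_1})=-\bigl(\nabla(\u-\u_h),\nabla\w_h^{\t_1}\bigr)
 -\bigl(\pihp^\NS-\ph^\NS,\div\w_h^{\t_1}\bigr)-(p-\pihp,\div\w_h^{\t_1}),
\]
while subtracting \eqref{eq:jump-ph-simil} from \eqref{eq:bdy-sing-0} yields
\[
 \jump_{12}(e_h^\V)=-\jump_{12}\!\left(\pihp^\NS-\ph^\NS+\pihp^{\V_1}-p_h^{\V_1}\right).
\]
So $e_h^\V$ solves a $2\times2$ linear system (since $J=2$, $\ST(\V)$ is $2$-dimensional, spanned by $\st_{\V K_1},\st_{\V K_2}$) whose right-hand side is controlled by the already-estimated quantities $|\u-\u_h|_{1,\KK(\V_1)}$, $\|p-\pihp\|_{0,\KK(\V_1)}$ via Lemma~\ref{lem:phns-error} (note $\KK(\V)\subset\KK(\V_1)$ here since $\V\in\KK(\V_1)$) and the regular-vertex estimate Lemma~\ref{lem:error-sr0} applied to $\V_1$.

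Next I would make the right-hand side bounds explicit. For the first equation, $|\w_h^{\t_1}|_1\le C(|K_1|+|K_2|)^{1/2}$ by \eqref{eq:norm-wh} and the local $L^2$ bound on $\div$ gives the first component bounded by $C|\KK(\V)|^{1/2}(|\u-\u_h|_{1,\KK(\V)}+\|\pihp^\NS-\ph^\NS\|_{0,\KK(\V)}+\|p-\pihp\|_{0,\KK(\V)})$, which by Lemma~\ref{lem:phns-error} is $\le C|\KK(\V)|^{1/2}(|\u-\u_h|_{1,\KK(\V)}+\|p-\pihp\|_{0,\KK(\V)})$. For the jump term I need an inverse-type estimate: since $\pihp^\NS-\ph^\NS$ and $\pihp^{\V_1}-p_h^{\V_1}$ are piecewise cubics on shape-regular triangles, $|K_1\cap K_2|^3\,|\p_{\t_1}(\cdot)(\V)|$ is bounded by $C|K_1\cap K_2|^3 (\mathrm{diam})^{-2}|\KK|^{-1/2}\|\cdot\|_{0,\KK}$, and since $|K_1\cap K_2|\le C\,\mathrm{diam}$ the net scaling is $\le C\,|K_1\cap K_2|\,|\KK|^{-1/2}\|\cdot\|_0\le C|\KK(\V_1)|^{1/2}(|\u-\u_h|_{1,\KK(\V_1)}+\|p-\pihp\|_{0,\KK(\V_1)})$ after applying Lemmas~\ref{lem:phns-error} and~\ref{lem:error-sr0}. (The cubing of $|K_1\cap K_2|$ in \eqref{def:jump12} is evidently chosen precisely to make this scaling come out right.)

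Finally I would show the $2\times2$ system matrix is boundedly invertible with the right scaling. The first row is $(e_h^\V,\div\w_h^{\t_1})$; writing $e_h^\V=\alp_1\st_{\V K_1}+\alp_2\st_{\V K_2}$, by \eqref{eq:qhdivwh0} this is $\tfrac1{100}(|K_1|\alp_1+|K_2|\alp_2)$. The second row, the jump of $e_h^\V$, is computed from the tangential derivatives of $\st_{\V K_1},\st_{\V K_2}$ at $\V$, which by \eqref{eq:exp-sting}--\eqref{eq:affine-sting} scale like $(\mathrm{diam})^{-1}$ times geometric constants; multiplied by $|K_1\cap K_2|^3$ this is a nondegenerate combination of $\alp_1,\alp_2$. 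The key point is that the determinant of this system does \emph{not} involve $\sin(\theta_1+\theta_2)$, so unlike \eqref{eq:srsys1q} it stays bounded away from zero (after rescaling by $|K_j|$ and $|\KK|$) even in the exactly singular case; I expect this to reduce, after substituting $\bet_j$ as in \eqref{eq:lem-bet}, to a system with an explicit $O(1)$ determinant by shape-regularity. Combining the invertibility with the right-hand side bounds and $\|\st_{\V K_j}\|_0\le C|K_j|^{1/2}$ from \eqref{eq:est-sting0} gives \eqref{eq:lem-error-phvs-svb}. The main obstacle is the careful bookkeeping of $h$-powers in the jump functional — verifying that the $|K_1\cap K_2|^3$ weight exactly balances the inverse estimate and the $|\KK|^{-1/2}$ factors so that both equations of the system, after rescaling, have $O(1)$ coefficients and an $O(1)$ determinant; everything else is a routine adaptation of Lemma~\ref{lem:error-sr0}.
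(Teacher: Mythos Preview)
Your proposal is correct and follows essentially the same route as the paper: form the error system by subtracting \eqref{eq:SSbdphv} from the exact relations \eqref{eq:Piphvtk} and \eqref{eq:bdy-sing-0}, bound the two right-hand sides via \eqref{eq:norm-wh}, Lemma~\ref{lem:phns-error}, Lemma~\ref{lem:error-sr0}, and an inverse estimate for the jump functional, and then observe that the resulting $2\times2$ coefficient matrix is uniformly invertible by shape-regularity. The paper makes the last two steps explicit by computing $\jump_{12}(e_1\st_{\V K_1}+e_2\st_{\V K_2})=-6\ell_1^2 e_1+6\ell_1^2 e_2$ directly from \eqref{eq:exp-sting}, so that the system becomes
\[
\frac{\ell_0\ell_1\sin\theta_1}{200}e_1+\frac{\ell_1\ell_2\sin\theta_2}{200}e_2=a,\qquad -6\ell_1^2 e_1+6\ell_1^2 e_2=b,
\]
whose determinant is proportional to $\sin\theta_1+\sin\theta_2$ and hence bounded below by shape-regularity alone---confirming exactly the point you anticipated, that $\sin(\theta_1+\theta_2)$ no longer appears.
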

\begin{proof}
Denote $e_h^\V=\Pi_hp^\V - p_h^\V$.
Then from \eqref{eq:bdy-sing-0}, \eqref{eq:SSbdphv}
and the same argument inducing \eqref{eq:ehvtk}, we have
\begin{equation}\label{eq:ehvtk-sbd}
  \arraycolsep=1.4pt\def\arraystretch{1.6}
  \begin{array}{rll}
   (e_h^\V, \div\w_h^{\t_1})&=&-\left(\nabla(\u-\u_h), \nabla\w_h^{\t_1}\right)
   -\left(\Pi_hp^\NS-p_h^\NS, \div\w_h^{\t_1}\right)-(p-\Pi_hp,\div\w_h^{\t_1}),\\
    \jump_{12}(e_h^\V)&=&-\jump_{12}\left(\pihp^\NS- \ph^\NS\right)
  -\jump_{12}\left( \pihp^{\V_1}-\ph^{\V_1}\right).
  \end{array}
\end{equation} 

We can  represent $e_h^\V\in\ST(\V)$ with $2$ unknown constants $e_1, e_2$ as 
\begin{equation}\label{def:ehv-bs}
e_h^\V=e_1 \stg{\V}{K_1} + e_2\stg{\V}{K_2}.
\end{equation}
Let 
 $\theta_j$ be the angle of $K_j$ at $\V$, $j=1,2$
 and $\ell_j=|\overline{\V\V_j}|, j=0,1,2$ as in Figure \ref{fig:sing}-(a).
Then, from \eqref{eq:exp-sting}, \eqref{eq:affine-sting},
we calculate
\begin{equation}\label{eq:J12-repre}
    \jump_{12} (e_h^\V)= -6{\ell_1}^2e_1 +6{\ell_1}^2e_2.
  \end{equation}
Abbreviating 
the right hand sides in \eqref{eq:ehvtk-sbd} by $a$, $b$, respectively,
 we can rewrite \eqref{eq:ehvtk-sbd} by \eqref{eq:qhdivwh0}, \eqref{eq:J12-repre} into
\begin{equation}\label{eq:ehvtk-sbd-2}\arraycolsep=1.4pt\def\arraystretch{1.6}
 \begin{array}{ccccl}
   \disp\frac{\ell_0\ell_1\sin\theta_1}{200}e_1 &+&\disp
                                                    \frac{\ell_{1}\ell_{2}\sin\theta_{2}}{200}e_{2} &=&a, \\
   -6{\ell_1}^2e_1 &+&6{\ell_1}^2e_2 &=&b.
 \end{array}
\end{equation}
From Lemma \ref{lem:phns-error} and \eqref{eq:norm-wh},
we have
\begin{equation}\label{est:norm-a} |a|\le  C \left(|K_1|+|K_2|\right)^{1/2}
  \left(|\u-\u_h|_{1,K_1\cup K_2} +\|p-\Pi_hp\|_{0,K_1\cup K_2}\right).
\end{equation}

For $q_h\in\Mh$ in \eqref{def:jump12}, we can estimate
\begin{equation}\label{est:jumpqh}
    \arraycolsep=1.5pt\def\arraystretch{2.0}
  \begin{array}{lll}
   \left| \jump_{12}\left( q_h\right)\right| &\le&
{\ell_1}^3  \left( \left\|\nabla q_h \big|_{K_1}(\V)\right\|_2
           +  \left\|\nabla q_h \big|_{K_2}(\V)\right\|_2 \right)  
                                                           \le C{\ell_1}^3 \left(|K_1|+|K_2|\right)^{-1/2}\left|q_h\right|_{1,K_1\cup K_2}\\
                                             &\le& C{\ell_1}^2 \left|q_h\right|_{1,K_1\cup K_2}\le C{\ell_1} \left\|q_h\right\|_{0,K_1\cup K_2}  \le C \left(|K_1|+|K_2|\right)^{1/2}\left\|q_h\right\|_{0,K_1\cup K_2}.
  \end{array}                                                        
\end{equation}
Then, since $\V_1$ is regular, Lemma \ref{lem:phns-error}, \ref{lem:error-sr0}
and \eqref{est:jumpqh} deduce
\begin{equation}\label{est:norm-jump}
 |b|\le  C \left(|K_1|+|K_2|\right)^{1/2}
  \left(|\u-\u_h|_{1,\KK(\V_1)} +\|p-\Pi_hp\|_{0,\KK(\V_1)}\right).
\end{equation}

We note that the system \eqref{eq:ehvtk-sbd-2} is far from singular,
since $0<C \le \sin\theta_j, j=1,2$ by shape-regularity.
Thus from \eqref{eq:ehvtk-sbd-2}, \eqref{est:norm-a}, \eqref{est:norm-jump}, we have
\begin{equation}\label{eq:phv-error-phvs-svb-9}
  |e_1|+|e_2|\le C \left(|K_1|+|K_2|\right)^{-1/2}
  \left(|\u-\u_h|_{1,\KK(\V_1)} +\|p-\Pi_hp\|_{0,\KK(\V_1)}\right).
\end{equation}
Then  \eqref{eq:lem-error-phvs-svb} comes
from \eqref{eq:est-sting0}, \eqref{def:ehv-bs}, \eqref{eq:phv-error-phvs-svb-9}.
\end{proof}

\subsection{definition of $\ph^\V$ for a nearly singular ordinary vertex $\V$}
Let $\V$ be a nearly singular vertex meeting an interior edge
and $K_1, K_2, \cdots, K_J$, all triangles in $\Th$ sharing $\V$ as in Figure \ref{fig:sing}.
Using the same notations in  \eqref{def:JJ}-\eqref{def:v0vJ},
 define a jump of a function $q_h$ at $\V$ across an interior edge $K_j\cap K_{j+1}$  as
\begin{equation}\label{def:jumpkk1}
  \jump_{\jjp}(q_h)
  ={|K_j\cap K_{j+1}|^3}\left(\frac{\p}{\p \t_j} \left(q_h\big|_{K_j}\right) (\V) - \frac{\p}{\p \t_j}\left(q_h\big|_{K_{j+1}}\right) (\V)\right),
\end{equation}
for  $j=1,2,\cdots,\JJ$, similarly to \eqref{def:jump12}.

We note that the adjacent vertices $\V_1, \V_2,\cdots,\V_{\JJ}$
are all regular from Lemma \ref{lem:isol-vtx}.
That is, $\ph^{\V_1}, \ph^{\V_2},\cdots, \ph^{\V_\JJ}$ are already defined
in Lemma \ref{lem:error-sr0}.

Thus, 
replacing \eqref{eq:LSphv-b} with new equations using the jumps 
in \eqref{def:jumpkk1}, 
we can consider the following system of $2\JJ$ equations for unknown $\ph^\V\in\ST(\V)$:
  \begin{equation}\label{eq:SSphv-svi} \arraycolsep=1.4pt\def\arraystretch{1.6}
  \begin{array}{lll}
  (p_h^\V, \div\w_h^{\t_j})&=&(\f,\w_h^{\t_j})-(\nabla\u_h, \nabla\w_h^{\t_j})-(p_h^\NS, \div\w_h^{\t_j}),\\
    \jump_{\jjp} (p_h^\V) &=& -\jump_{\jjp}(\ph^\NS+p_h^{\V_j} ),\quad
  \end{array}
\end{equation}
for  $j=1,2,\cdots,\JJ$.

Then, we can repeat the arguments for Lemma \ref{lem:singval}, \ref{lem:error-sr0},
\ref{lem:phv-error-svb} to establish the following lemma.
\begin{lemma}\label{lem:phv-error-svi}
  If $\V$ is a nearly singular vertex meeting an interior edge as in Figure \ref{fig:sing},
  define $\ph^\V\in\ST(\V)$ as the least square solution of
  \eqref{eq:SSphv-svi}. Then, for $\pihp^\V$ in \eqref{eq:decom-pihp-st},  we estimate
\begin{equation*}
    \|\Pi_hp^\V-p_h^\V\|_{0} \le C \sum_{j=1}^{\JJ}
    \left(|\u-\u_h|_{1,\KK(\V_j)} +\|p-\Pi_hp\|_{0,\KK(\V_j)}\right),
  \end{equation*}
where $\V_1, \V_2, \cdots, \V_{\JJ}$ are all vertices sharing interior edges with $\V$.
\end{lemma}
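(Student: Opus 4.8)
The plan is to follow the same three-step strategy used for Lemmas~\ref{lem:error-sr0} and~\ref{lem:phv-error-svb}, now with the jump equations replacing \eqref{eq:LSphv-b}. First I would subtract: setting $e_h^\V=\Pi_hp^\V-p_h^\V\in\ST(\V)$, I observe that $\pihp^\V$ satisfies the ``true'' system obtained from \eqref{eq:Piphvtk} for the first $\JJ$ rows and from the analog of \eqref{eq:bdy-sing-0}–\eqref{eq:jump-php-svb} for the jump rows. The key point for the jump rows is that, for each interior edge $K_j\cap K_{j+1}$, the only sting components of $\pihp$ that are \emph{not} constant along that edge are $\pihp^\NS$, $\pihp^\V$, and $\pihp^{\V_j}$; the stings attached to the far vertices of $K_j,K_{j+1}$ restrict to $-\tfrac1{10}$ on the opposite edge by \eqref{eq:value-sting}, so their jump across $K_j\cap K_{j+1}$ vanishes, exactly as in \eqref{eq:jump-php-svb}. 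Hence continuity of $\pihp$ gives $\jump_{\jjp}(\pihp^\V)=-\jump_{\jjp}(\pihp^\NS+\pihp^{\V_j})$, and comparing with \eqref{eq:SSphv-svi} (both being solved in the least-squares sense via their normal equations, as in the derivation of \eqref{eq:ehvtk}) yields for $j=1,\dots,\JJ$
\begin{equation*}\arraycolsep=1.4pt\def\arraystretch{1.6}
\begin{array}{rcl}
 (e_h^\V,\div\w_h^{\t_j}) &=& -\left(\nabla(\u-\u_h),\nabla\w_h^{\t_j}\right)-\left(\Pi_hp^\NS-p_h^\NS,\div\w_h^{\t_j}\right)-(p-\Pi_hp,\div\w_h^{\t_j}),\\
 \jump_{\jjp}(e_h^\V) &=& -\jump_{\jjp}\left(\pihp^\NS-\ph^\NS\right)-\jump_{\jjp}\left(\pihp^{\V_j}-\ph^{\V_j}\right).
\end{array}
\end{equation*}

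Second, I would bound the right-hand sides. The terms in the first row are controlled by Lemma~\ref{lem:phns-error} and \eqref{eq:norm-wh}, contributing $C(|K_j|+|K_{j+1}|)^{1/2}$ times $|\u-\u_h|_{1,K_j\cup K_{j+1}}+\|p-\Pi_hp\|_{0,K_j\cup K_{j+1}}$, just as in \eqref{est:norm-a}. For the jump rows I would reuse the chain of inverse estimates in \eqref{est:jumpqh} applied to $q_h=\pihp^\NS-\ph^\NS$ and to $q_h=\pihp^{\V_j}-\ph^{\V_j}$, giving $|\jump_{\jjp}(\cdot)|\le C(|K_j|+|K_{j+1}|)^{1/2}\|q_h\|_{0,K_j\cup K_{j+1}}$; then Lemma~\ref{lem:phns-error} handles the first piece and Lemma~\ref{lem:error-sr0} (valid since each $\V_j$ is regular by Lemma~\ref{lem:isol-vtx}) handles the second, producing the bound $C(|K_j|+|K_{j+1}|)^{1/2}(|\u-\u_h|_{1,\KK(\V_j)}+\|p-\Pi_hp\|_{0,\KK(\V_j)})$ as in \eqref{est:norm-jump}.

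Third comes the conditioning step, the analog of Lemma~\ref{lem:singval}. Writing $e_h^\V=\sum_{k=1}^J\alp_k\stg{\V}{K_k}$ and substituting $\bet_k=\ell_{k-1}\ell_k\alp_k/200$ as in \eqref{eq:lem-bet}, the first-row equations become $\sin\theta_j\bet_j+\sin\theta_{j+1}\bet_{j+1}=a_j$ exactly as \eqref{eq:srsys1q-a}, while the $j$-th jump equation, by the computation behind \eqref{eq:J12-repre}, is a relation of the form $c_j\bet_j+c_{j+1}\bet_{j+1}=b_j$ with coefficients $c_k$ comparable to $\ell_{k-1}\ell_k/\ell_k^2$ up to shape-regular constants — crucially, the $2\times2$ block coupling $\bet_j,\bet_{j+1}$ from the $j$-th sine equation and the $j$-th jump equation has determinant bounded below by $C>0$ \emph{regardless} of whether $\theta_j+\theta_{j+1}$ is near $\pi$, because a sine-type row and a difference-type row are never proportional for shape-regular angles. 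From there the same ``solve a square subsystem consecutively'' argument as in \eqref{eq:srsys2}–\eqref{eq:smallest-s} shows the least-squares solution $(\bet_1,\dots,\bet_J)$ is bounded by $C\sum_j(|a_j|+|b_j|)$; undoing the substitution via \eqref{eq:est-sting0} then gives $\|e_h^\V\|_0\le C|\KK(\V)|^{-1/2}\sum_{j=1}^\JJ(|a_j|+|b_j|)$. Combining with the right-hand-side bounds from the second step and using $|K_j|+|K_{j+1}|\le C|\KK(\V)|$ finishes the estimate.

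The main obstacle is the conditioning claim in the third step: one must verify that replacing the cosine rows \eqref{eq:srsys1q-b} by the jump rows genuinely removes the degeneracy at (near-)singular $\V$. Unlike the boundary non-corner case of Lemma~\ref{lem:phv-error-svb}, where there is a single clean $2\times2$ system \eqref{eq:ehvtk-sbd-2}, here the jump rows and sine rows interlock cyclically (for interior $\V$) across $j$, so I would need to check that an appropriate square subsystem — one sine equation plus the adjacent jump equation for each consecutive pair — can be solved sequentially with a uniformly invertible leading block, and that the smallest singular value of the full $2\JJ\times J$ matrix is bounded below by that of this subsystem. This is where the hypothesis that $\V$ is \emph{not a dead corner} (so $\JJ\ge1$ and at least one interior edge, hence at least one regular neighbour $\V_j$) is used, and where one must be careful that the coefficients $c_k$ coming from \eqref{eq:J12-repre}-type computations do not vanish or blow up — they are controlled purely by shape-regularity of $\Th$.
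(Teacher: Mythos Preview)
Your proposal is correct and follows essentially the same route the paper intends: the paper's own proof is the single sentence ``we can repeat the arguments for Lemma~\ref{lem:singval}, \ref{lem:error-sr0}, \ref{lem:phv-error-svb},'' and you have carried out exactly that repetition. Your three steps (error system, right-hand-side bounds via Lemmas~\ref{lem:phns-error} and~\ref{lem:error-sr0} together with \eqref{est:jumpqh}, and the conditioning argument via a well-chosen $J\times J$ subsystem) are the correct expansions of that hint; in particular your observation that the $j$-th sine row $(\sin\theta_j,\sin\theta_{j+1})$ and the $j$-th jump row (with opposite signs on $\beta_j,\beta_{j+1}$ after the substitution \eqref{eq:lem-bet}) have determinant equal to a sum of two positive terms bounded below by shape regularity is the right replacement for \eqref{eq:sinth12}, and solving this $2\times2$ block followed by the remaining sine equations reproduces \eqref{eq:srsys2}--\eqref{eq:smallest-s} verbatim.
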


\section{$\ph^\V$ for a dead corner $\V$}\label{sec:dead}
\label{sec:sting-corner}
Let $\V$ be a vertex meeting no interior edge. 
Then $\V$ is a dead corner and has only one triangle $K_1$  as in Figure \ref{fig:dead}.
There exists a triangle $K$ in $\Th$ sharing 
two vertices $\W_1, \W_2$ with $K_1$ . Denote by $\W_3$,
the third vertex of $K$ not shared with $K_1$.

\begin{figure}[h]
  \centering
  \includegraphics[width=0.32\textwidth]{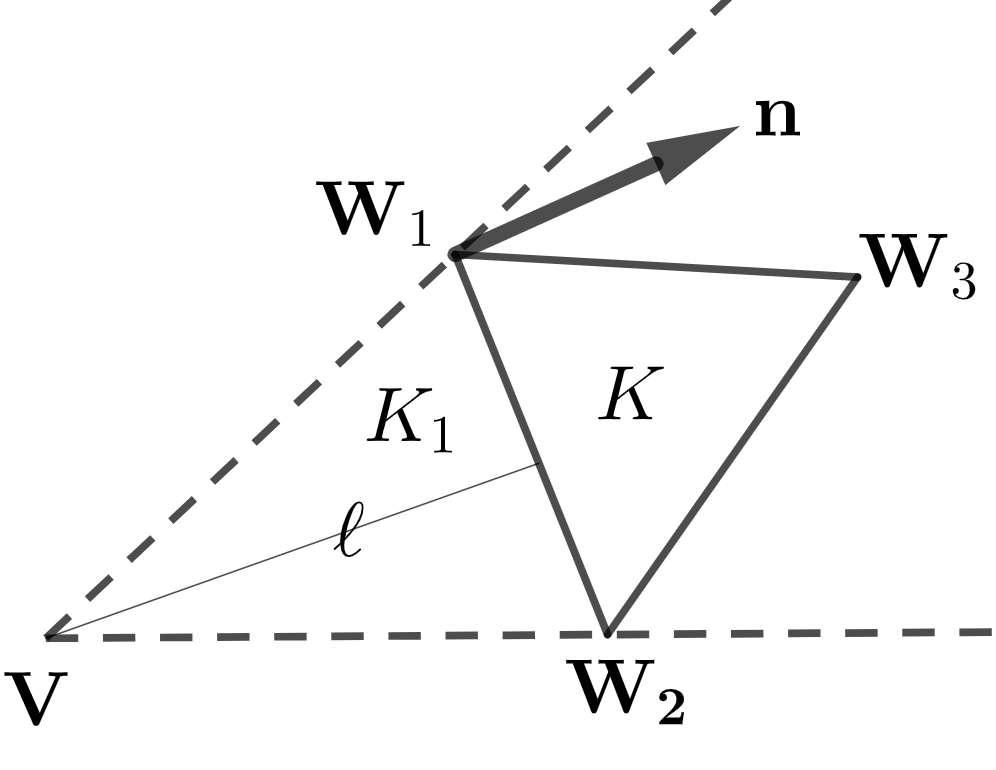}
  \caption{an example of a dead corner $\V$ (dashed lines belong to $\p\O$)}
  \label{fig:dead}
\end{figure}

Define a jump of a function $q_h$  at $\W_1$ across $K_1\cap K$ as
\begin{equation}\label{def:jump12n}
  \jump(q_h)
  ={\ell^3}\left(\frac{\p}{\p \n} \left(q_h\big|_{K_1}\right) (\W_1)
    - \frac{\p}{\p \n}\left(q_h\big|_{K}\right) (\W_1)\right),
\end{equation}
where $\n$ is a unit outward normal vector on $K_1\cap K$ of $K_1$ and $\ell$ is the distance
between $\V$ and $K_1\cap K$.

We note that  $\ph^{\W_1}, \ph^{\W_2} $, $\ph^{\W_3}$ are already defined 
in Lemma \ref{lem:error-sr0} and \ref{lem:phv-error-svi}, since $\W_1, \W_2, \W_3$ are not corners  by Assumption \ref{asm:Th} on $\Th$.
Thus we can consider the following equation for  unknown $\ph^\V\in \ST(\V)$,
\begin{equation}\label{eq:lastphv}
  \jump(\ph^\V)=-\jump(\ph^\NS+\ph^{\W_1}+\ph^{\W_2}+\ph^{\W_3}).
\end{equation}

For a vertex $\W$, let $\mathcal{V}(\W)$ be a set of all vertices
sharing interior edges with $\W$, then denote
\[ \KK\KK(\W) = \bigcup_{\mathbf{U}\in\mathcal{V}(\W)} \KK(\mathbf{U}).  \]
\begin{lemma}\label{lem:phv-error-corner}
  If $\V$ is a vertex meeting no interior edge as in Figure \ref{fig:dead},
  define $p_h^\V\in\ST(\V)$ as the solution of \eqref{eq:lastphv}.
  Then, for $\pihp^\V$ in \eqref{eq:decom-pihp-st}, we estimate
\begin{equation}\label{eq:lem:phv-error-corner-0}
  \|\Pi_hp^\V-p_h^\V\|_{0} \le C \sum_{j=1}^3
    \left(|\u-\u_h|_{1,\KK\KK(\W_j)} +\|p-\Pi_hp\|_{0,\KK\KK(\W_j)}\right),
  \end{equation}
  where $\W_1, \W_2, \W_3$ are all vertices of the triangle whose intersection 
  with $\KK(\V)$ is an edge.
\end{lemma}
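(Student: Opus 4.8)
The plan is to follow the pattern of the proof of Lemma \ref{lem:phv-error-svb}, specialized to the one-unknown setting forced by a dead corner. Since $\V$ is a dead corner, $\KK(\V)=K_1$ and $\ST(\V)=<\st_{\V K_1}>$ is one dimensional, so $\ph^\V$ and the unknown in \eqref{eq:lastphv} reduce to a single scalar $e$ with $\ph^\V=e\,\st_{\V K_1}$; thus \eqref{eq:lastphv} is a single scalar equation, and it determines $\ph^\V$ uniquely once we know $\jump(\st_{\V K_1})\neq 0$. I will also use that $K_1\cap K=\overline{\W_1\W_2}$ is the edge of $K_1$ opposite $\V$, so $\ell$ in \eqref{def:jump12n} is the height of $K_1$ from $\V$.

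First I would derive the exact counterpart of \eqref{eq:lastphv} for $\pihp^\V$. The Hermite conditions \eqref{def:php} give $\nabla\pihp\big|_{K_1}(\W_1)=\nabla p(\W_1)=\nabla\pihp\big|_{K}(\W_1)$, hence $\jump(\pihp)=0$. On the other hand $\jump(\cdot)$ depends only on the two one-sided gradients at $\W_1$ over $K_1$ and $K$, so it annihilates $\pihp^\CS$ (piecewise constant on each triangle) and $\pihp^\U$ for every vertex $\U\notin\{\V,\W_1,\W_2,\W_3\}$, because a sting function of such a $\U$ is supported on triangles containing $\U$ and therefore vanishes on $K_1$ and on $K$. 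Applying $\jump$ to the decomposition \eqref{eq:decom-pihp}, \eqref{eq:decom-pihp-st} then leaves
\[ \jump(\pihp^\V)=-\jump\big(\pihp^\NS+\pihp^{\W_1}+\pihp^{\W_2}+\pihp^{\W_3}\big). \]
Subtracting \eqref{eq:lastphv} and writing $e_h^\V=\pihp^\V-\ph^\V=e\,\st_{\V K_1}$ gives
\[ \jump(e_h^\V)=-\jump\big((\pihp^\NS-\ph^\NS)+\sum_{j=1}^{3}(\pihp^{\W_j}-\ph^{\W_j})\big). \]

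Next I would evaluate both sides. Because $\st_{\widehat\V\widehat K}$ in \eqref{eq:exp-sting} depends only on the affine coordinate measuring distance to the edge opposite $\widehat\V$, the affine invariance \eqref{eq:affine-sting} shows $\st_{\V K_1}$ is the same cubic $g$ evaluated at $t/\ell$, with $t$ the distance to $K_1\cap K$; hence on $K_1\cap K$ its gradient is normal to the edge, $\p_\n\st_{\V K_1}(\W_1)=\pm\tfrac{9}{5\ell}$, and since $\st_{\V K_1}$ vanishes off $K_1$ we get $\jump(\st_{\V K_1})=\pm\tfrac95\ell^2\neq0$, so $\jump(e_h^\V)=\pm\tfrac95\ell^2 e$. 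For the right side I would use an estimate of the type \eqref{est:jumpqh}, namely $|\jump(q_h)|\le C(|K_1|+|K|)^{1/2}\|q_h\|_{0,K_1\cup K}$ for $q_h\in\Mh$, on each difference: the term $\pihp^\NS-\ph^\NS$ is controlled on $K_1\cup K$ by Lemma \ref{lem:phns-error}, and each $\pihp^{\W_j}-\ph^{\W_j}$ by Lemma \ref{lem:error-sr0} or Lemma \ref{lem:phv-error-svi} — applicable precisely because $\W_1,\W_2,\W_3$ are non-corners (Assumption \ref{asm:Th}), hence regular or nearly singular ordinary vertices — producing a bound in terms of $|\u-\u_h|_{1,\KK\KK(\W_j)}$ and $\|p-\pihp\|_{0,\KK\KK(\W_j)}$, into which the $K_1\cup K$ contribution is absorbed since $K_1\cup K\subset\KK\KK(\W_j)$. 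Combining with $\jump(e_h^\V)=\pm\tfrac95\ell^2 e$, using $\ell^2\sim|K_1|\sim|K|$ by shape-regularity and $\|\st_{\V K_1}\|_0\le C|K_1|^{1/2}$ from \eqref{eq:est-sting0}, yields $\|e_h^\V\|_0=|e|\,\|\st_{\V K_1}\|_0\le C\sum_{j=1}^{3}(|\u-\u_h|_{1,\KK\KK(\W_j)}+\|p-\pihp\|_{0,\KK\KK(\W_j)})$, which is \eqref{eq:lem:phv-error-corner-0}.

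The main obstacle is the bookkeeping in the first step: identifying exactly which components of the decomposition of $\pihp$ survive the jump functional at $\W_1$ — which hinges on the structural facts that a sting function is supported on a single triangle and is constant on the edge opposite its vertex — and verifying that $\jump(\st_{\V K_1})$ is nonzero with scaling $\sim\ell^2$, so that the single equation \eqref{eq:lastphv} genuinely defines $\ph^\V$ and the error estimate closes. The remaining steps merely repeat the inverse-estimate and local-stability manipulations already used for Lemmas \ref{lem:phv-error-svb} and \ref{lem:phv-error-svi}.
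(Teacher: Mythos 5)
Your proposal is correct and follows essentially the same route as the paper's proof: use the continuity of $\nabla\pihp$ at $\W_1$ (from \eqref{def:php}) to derive $\jump(\pihp^\V)=-\jump(\pihp^\NS+\pihp^{\W_1}+\pihp^{\W_2}+\pihp^{\W_3})$, subtract \eqref{eq:lastphv} to get the one-dimensional error equation, compute $\jump(\st_{\V K_1})=-\tfrac95\ell^2$ from \eqref{eq:exp-sting}--\eqref{eq:affine-sting}, and bound the right-hand side via the estimate \eqref{est:jumpqh2} together with Lemmas \ref{lem:phns-error}, \ref{lem:error-sr0}, \ref{lem:phv-error-svi} and the scaling \eqref{eq:est-sting0}. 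The only cosmetic difference is your sign ambiguity in $\jump(\st_{\V K_1})$, which is harmless since only the magnitude enters the estimate.
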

\begin{proof}
We remind that $\nabla\pihp$ is continuous at $\W_1$ by \eqref{def:php}.
It can be written in
\begin{equation}\label{eq:jump-php-w}
  \jump\left( \pihp^\V   \right) =-\jump\left( \pihp^\NS +\pihp^{\W_1}+\pihp^{\W_2}+\pihp^{\W_3}\right).
\end{equation}
Set
the error $e_h^\V=\pihp^\V-\ph^\V=e\stg{\V}{K_1}$ for some constant $e$,
then \eqref{eq:lastphv} and \eqref{eq:jump-php-w} make
\begin{equation}
  \label{eq:jump-last-phpph}
  \jump(e\stg{\V}{K_1}) = -\jump\left(   \pihp^{\nsg{}{}}-\ph^{\nsg{}{}} +
\sum_{j=1}^3\pihp^{\W_j}-\ph^{\W_j}   \right).
\end{equation}

From \eqref{eq:exp-sting}, \eqref{eq:affine-sting}, we have
\begin{equation}
  \label{eq:lem:phv-corner-a}
   \jump(\stg{\V}{K_1})=-\frac95{\ell}^2.
 \end{equation}
For the right hand side in \eqref{eq:jump-last-phpph},
we can estimate the following for  $q_h\in\Mh$,
\begin{equation}\label{est:jumpqh2}
  \left| \jump\left( q_h\right)\right| \le C \left(|K_1|
    +|K|\right)^{1/2}\left\|q_h\right\|_{0,K_1\cup K},
\end{equation}
 same as in \eqref{est:jumpqh}
by similarity of \eqref{def:jump12} and \eqref{def:jump12n}.

Then, we can deduce \eqref{eq:lem:phv-error-corner-0}
from \eqref{eq:est-sting0}, \eqref{eq:jump-last-phpph}-\eqref{est:jumpqh2} and
Lemma \ref{lem:phns-error}, \ref{lem:error-sr0}, \ref{lem:phv-error-svi}.
\end{proof}

We have defined the sting components $\ph^\V\in \ST(\V)$ for all vertices $\V$
in Lemma \ref{lem:error-sr0}, \ref{lem:phv-error-svi} and \ref{lem:phv-error-corner}.
All the results are summarized in the following lemma.
\begin{lemma}\label{lem:phst-error}
  Define a sting component $\ph^\ST\in \ST_h$ as
  \begin{equation*}
    \ph^{\ST} = \sum_{\V:\mbox{\rm{vertex}}} \ph^{\V}.
\end{equation*}
  Then, for the sting component $\pihp^\ST$ of $\pihp$ in \eqref{eq:decom-pihp},
  we estimate
  \begin{equation*}
    \|\pihp^\ST-\ph^\ST\|_0 \le C  \left(|\u-\u_h|_{1} +\|p-\Pi_hp\|_{0}\right).
\end{equation*}
\end{lemma}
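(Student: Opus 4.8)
The plan is to assemble the global estimate from the vertex-wise estimates already proved in Lemmas~\ref{lem:error-sr0}, \ref{lem:phv-error-svi} and \ref{lem:phv-error-corner}, using the orthogonal decomposition $\ST_h=\bigoplus_{\V}\ST(\V)$ from \eqref{eq:decom-pihp-st}. First I would observe that, because the summands $\ph^\V$ (resp.\ $\pihp^\V$) live in the mutually orthogonal subspaces $\ST(\V)$, we have the Pythagorean identity
\begin{equation*}
  \|\pihp^\ST-\ph^\ST\|_0^2 = \sum_{\V:\,\mathrm{vertex}} \|\pihp^\V-\ph^\V\|_0^2,
\end{equation*}
so it suffices to bound $\sum_\V \|\pihp^\V-\ph^\V\|_0^2$ by $C(|\u-\u_h|_1^2+\|p-\Pi_hp\|_0^2)$ and then take square roots (absorbing a $\sqrt 2$ constant). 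Each term on the right is controlled, according to the class of $\V$, by a sum of local quantities $|\u-\u_h|_{1,\omega_\V}+\|p-\Pi_hp\|_{0,\omega_\V}$, where $\omega_\V$ is $\KK(\V)$, or $\bigcup_j\KK(\V_j)$, or $\bigcup_j\KK\KK(\W_j)$ depending on whether $\V$ is regular, nearly singular ordinary, or a dead corner.

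The key step is then a finite-overlap argument: I would show that each triangle $K\in\Th$ belongs to only a bounded number (depending on shape-regularity, i.e.\ on the lower angle bound $\vartheta$ and the minimal angle of $\Th$) of the patches $\omega_\V$ as $\V$ ranges over all vertices. For regular vertices this is immediate since $K\in\KK(\V)$ for exactly its three vertices. For nearly singular ordinary vertices and dead corners the patches are larger ($\KK(\V_j)$, $\KK\KK(\W_j)$), but they are built from a uniformly bounded number of stars of neighbouring vertices, and by shape-regularity the number of triangles meeting any vertex is uniformly bounded; hence the total multiplicity with which any fixed $K$ is counted across all patches $\omega_\V$ is bounded by an absolute constant $N$. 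Consequently
\begin{equation*}
  \sum_{\V} \bigl(|\u-\u_h|_{1,\omega_\V}^2 + \|p-\Pi_hp\|_{0,\omega_\V}^2\bigr)
  \le N\bigl(|\u-\u_h|_{1}^2 + \|p-\Pi_hp\|_{0}^2\bigr).
\end{equation*}
Combining this with the squared forms of the three vertex-wise estimates and the Pythagorean identity yields the claimed bound; one should just be slightly careful that the right-hand sides of Lemmas~\ref{lem:error-sr0}, \ref{lem:phv-error-svi}, \ref{lem:phv-error-corner} are sums over $j$, so one first applies Cauchy--Schwarz in $j$ (the number of terms being uniformly bounded) to pass from $(\sum_j x_j)^2$ to $C\sum_j x_j^2$ before summing over $\V$.

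I expect the main obstacle to be bookkeeping rather than any genuine difficulty: making the finite-overlap constant $N$ truly independent of $h$ requires invoking shape-regularity (bounded valence of vertices) together with Lemma~\ref{lem:isol-vtx} (nearly singular vertices are isolated), so that the enlarged patches $\KK\KK(\W_j)$ attached to dead corners cannot accumulate. A clean way to organize this is to note that $\|\pihp^\ST-\ph^\ST\|_0 \le \sum_\V \|\pihp^\V-\ph^\V\|_0$ directly (triangle inequality, avoiding orthogonality), bound each summand by the corresponding local error, and then for the regular-vertex contribution use $\sum_\V |\u-\u_h|_{1,\KK(\V)}\le C|\u-\u_h|_1$ — which itself follows from Cauchy--Schwarz over the $\V$'s in each fixed bounded-size cluster — while the nearly singular and dead-corner contributions, being finite in the relevant neighbourhoods and involving only regular-vertex patches, are likewise absorbed into $C(|\u-\u_h|_1+\|p-\Pi_hp\|_0)$. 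Either route gives the result; I would present the squared/Pythagorean version since it makes the finite-overlap counting most transparent.
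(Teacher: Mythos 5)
The paper gives no proof of Lemma~\ref{lem:phst-error}; it is presented as a summary of Lemmas~\ref{lem:error-sr0}, \ref{lem:phv-error-svi}, \ref{lem:phv-error-corner}, and the argument you describe (sum the vertex-wise local estimates and control the overlap multiplicity) is exactly what is implicitly intended. However, two of your steps are wrong as stated. The subspaces $\ST(\V)$ for distinct vertices are \emph{not} mutually orthogonal, so the Pythagorean identity you invoke is false: if $\V,\V'$ are two vertices of the same triangle $K$, the quadrature rule \eqref{def:sting} together with \eqref{eq:value-sting} gives
\begin{equation*}
\int_K \st_{\V K}\,\st_{\V' K}\,dxdy \;=\; \frac{|K|}{100}\,\st_{\V' K}(\V) \;=\; -\frac{|K|}{1000}\;\neq\;0.
\end{equation*}
The decomposition $\ST_h=\bigoplus_\V\ST(\V)$ is direct but not orthogonal. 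What you actually need is the finite-overlap Cauchy--Schwarz bound $\|\sum_\V f_\V\|_0^2 \le N\sum_\V\|f_\V\|_0^2$, where $N$ is the maximal number of supports $\KK(\V)$ covering any point (here $N=3$, since a triangle has three vertices). Replacing the equality by this inequality, the rest of your squared argument — Cauchy--Schwarz in $j$ for the multi-term right-hand sides and bounded multiplicity of the patches $\KK(\V)$, $\bigcup_j\KK(\V_j)$, $\bigcup_j\KK\KK(\W_j)$ by shape-regularity and Lemma~\ref{lem:isol-vtx} — does go through.

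Your alternative ``clean way'' via the $\ell^1$ triangle inequality, on the other hand, is genuinely broken, not merely in need of polish. The estimate $\sum_\V|\u-\u_h|_{1,\KK(\V)}\le C|\u-\u_h|_1$ cannot hold with $C$ independent of $h$: on a quasi-uniform mesh there are $O(h^{-2})$ vertices, and if $\nabla(\u-\u_h)$ were of constant size then each $|\u-\u_h|_{1,\KK(\V)}$ would be of order $h$, so the left-hand side would scale like $h^{-1}$ against an $O(1)$ right-hand side. Finite overlap controls sums of \emph{squares}, not sums of norms; the $\ell^2$/squared route is the correct one, and it closes the proof once the Pythagorean step is replaced by the finite-overlap bound as above.
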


\section{Piecewise constant component}\label{sec:const}
\subsection{inf-sup condition}
Define the following spaces:
\begin{align}
  \label{def:vtx-c0}
  \begin{split}
    V_{h,0}&=\{\v_h\in [\PO4\cap H_0^1(\O)]^2\ :\ \nabla\v_h \mbox{ is   continuous at all vertices in }\Th\},\\
    V_{h,00}&=\{\v_h\in V_{h,0}\ :\ \nabla\v_h \mbox{ vanishes at all vertices in }\Th\}.
\end{split}
\end{align}
Then we have the following inf-sup condition for $V_{h,00}\times\PO0\cap L_0^2(\O)$.
\begin{lemma}\label{lem:V00-infsup}
  For each $\cs_h\in\PO0\cap L_0^2(\O)$,  there exists a nontrivial
  $\v_h\in V_{h,00}$ such that
 \begin{equation*}
    \beta \|\cs_h\|_0 |\v_h|_1 \le (\cs_h,\div\v_h),
  \end{equation*}
  for a constant $\bet>0$ regardless of $h$.
\end{lemma}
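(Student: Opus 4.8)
The plan is to establish the inf-sup condition for the pair $V_{h,00} \times (\PO0 \cap L_0^2(\O))$ by reducing it to a known macroelement / Fortin-type argument. The target space $\PO0 \cap L_0^2(\O)$ consists of piecewise constants with zero mean, and $V_{h,00}$ consists of $P^4$ vector fields in $[H_0^1(\O)]^2$ whose gradient vanishes at every vertex of $\Th$. The key observation is that on each triangle $K$ the bubble space $\BB(K)$ from \eqref{def:BK} is contained in $V_{h,00}$: a function vanishing on $\p K$ together with its gradient at the three vertices of $K$ automatically has continuous (indeed zero) gradient at all vertices. But a piecewise constant tested only against $\bigoplus_K \BB(K)$ produces $\div\v_h$ with zero mean on each $K$, so $(\cs_h, \div\v_h) = 0$ for piecewise constants — the bubbles alone are useless here. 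Hence I must use the vertex-based and edge-based degrees of freedom of $V_{h,00}$ that couple across elements.

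**The first step** is to recall that the standard Scott–Vogelius / Falk–Neilan theory gives an inf-sup condition between $[\PO4 \cap H_0^1(\O)]^2$ (or an $H^2$-conforming subspace) and $\PO3 \cap L_0^2(\O)$ away from singular vertices; restricting the pressure test space to the much smaller $\PO0 \cap L_0^2(\O)$ only makes the condition easier. Concretely, I would build a Fortin operator or, more directly, for a given $\cs_h \in \PO0 \cap L_0^2(\O)$ construct an explicit $\v_h \in V_{h,00}$ as follows. Since $\cs_h$ has zero mean over the connected domain $\O$, standard graph/spanning-tree arguments on the dual graph of $\Th$ let me write $\cs_h = \sum_{e} c_e\,(\mathbf{1}_{K_e^+} - \mathbf{1}_{K_e^-})$ over interior edges $e$, where $K_e^\pm$ are the two triangles sharing $e$. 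For each interior edge $e$ I would exhibit a local field $\v_h^e \in V_{h,00}$ supported on $K_e^+ \cup K_e^-$ with $\int_{K_e^+} \div \v_h^e > 0$ and $\int_{K_e^-}\div \v_h^e < 0$ and zero on all other triangles — this is possible by assigning a suitable nonzero tangential-derivative value at the shared vertices along $e$ (using functions like the $w_j$ of \eqref{cond:wk}, scaled by the edge normal, which do exactly this by \eqref{eq:divwhval}). Summing $\v_h = \sum_e t_e \v_h^e$ with coefficients $t_e$ chosen to match the pattern of $\cs_h$ yields $(\cs_h, \div \v_h) \gtrsim \|\cs_h\|_0^2$ while $|\v_h|_1 \lesssim \|\cs_h\|_0$ by local scaling and finite overlap.

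**The main obstacle** I anticipate is uniformity of the constant $\beta$ with respect to $h$ and, more delicately, its independence from the mesh geometry near singular vertices — precisely the pathology this paper is built to handle. The saving grace here is that the pressure space is only $\PO0$, not $\PO3$: piecewise constants do not feel the sting functions at all (a constant is $L^2$-orthogonal to $\NS(K)$ and the problematic directions live in $\ST_h$), so the singular-vertex degeneracy that plagues $\ST(\V)$ in \eqref{eq:srsys1q} simply does not enter the bound for $\cs_h$. Thus the inf-sup constant for this restricted pair depends only on shape-regularity and on the angle lower bound $\vartheta$, via the non-degeneracy of the edge-based local constructions, and a compactness / scaling argument on the finite collection of reference macroelement configurations closes the estimate. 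I would therefore organize the proof as: (i) reduce to edge-patch building blocks via the spanning-tree decomposition of zero-mean piecewise constants; (ii) construct each building block in $V_{h,00}$ with controlled $H^1$ norm and a uniform lower bound on the induced local divergence pairing; (iii) assemble and invoke finite overlap plus shape-regularity to get $\beta$ independent of $h$.
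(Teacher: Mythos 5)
Your overall strategy is genuinely different from the paper's, and unfortunately it has two real gaps.

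First, a concrete technical error: the functions $w_j$ of \eqref{cond:wk}, and hence the vector fields $w_j\bxi$, do \emph{not} lie in $V_{h,00}$. By construction $\frac{\partial w_j}{\partial\boldsymbol{\tau}_j}(\V)=1$, so $\nabla(w_j\bxi)(\V)=\bxi\otimes\nabla w_j(\V)\neq\mathbf{0}$, violating the defining constraint of $V_{h,00}$ in \eqref{def:vtx-c0} that $\nabla\v_h$ vanish at all vertices. Edge-patch test fields in $V_{h,00}$ with $\int_{K_j}\div\v_h=(-1)^{j+1}$ do exist --- the paper uses exactly such fields in \eqref{cond:vh-for-qhc} --- but they are not the $w_j$-type functions: on the shared edge the normal component must be a quartic with double zeros at both endpoints, a completely different local object from $w_j$.

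Second, and more fundamentally, the spanning-tree (or ``edge-difference'') decomposition $\cs_h=\sum_e c_e(\mathbf{1}_{K_e^+}-\mathbf{1}_{K_e^-})$ cannot give an $h$-uniform constant by local scaling and finite overlap. The coefficients along a tree do not decay: a single $O(1)$ spike on one triangle produces coefficients of order $1$ on all $O(1/h)$ edges of the path back to the root, so $\sum_e|c_e|^2$ is of order $1/h$ while $\|\cs_h\|_0^2\sim h^2$, and the resulting $|\v_h|_1$ blows up relative to $\|\cs_h\|_0$. Establishing uniform stability for piecewise constants is intrinsically global; this is precisely why the classical proofs (Bernardi--Raugel, Bogovski\u{\i}/Fortin) route through a continuous inf-sup on $\O$ rather than through element-wise telescoping. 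Your point (iii) about finite overlap does not help because the tree representation is not a finite-overlap decomposition of $\cs_h$.

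For comparison, the paper's proof starts from the \emph{known} Bernardi--Raugel inf-sup pair $[\PO2\cap H_0^1(\O)]^2\times(\PO0\cap L_0^2(\O))$ to obtain a candidate $\w$, then constructs a local $P^4$ correction $\z$ (see \eqref{def:v-in-p4}) that matches $\nabla\w$ at vertices, has zero integral on each edge (so $(1,\div\z)_K=0$ and the divergence pairing against piecewise constants is unchanged), and obeys $|\z|_1\le C|\w|_1$ by scaling. Then $\w-\z\in V_{h,00}$ does the job. This is a short Fortin-type modification of an existing global result; it sidesteps both of the difficulties your proposal runs into.
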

\begin{proof}
  Given $\cs_h\in\PO0\cap L_0^2(\O)$, there exists a nontrivial $\w\in [\PO2 \cap H_0^1(\O)]^2$
  such that \cite{Bernardi1985}
  \begin{equation}
    \label{eq:p2-p0-infsup}
   \beta \|\cs_h\|_0 |\w|_1 \le (\cs_h,\div\w), 
 \end{equation}
 for a constant $\bet>0$ regardless of $h$.
 
For each triangle $K$ in $\Th$, define $\z_K\in [P^4]^2$ so that
 \begin{equation}
   \label{def:v-in-p4}\arraycolsep=1.4pt\def\arraystretch{1.6}
   \begin{array}{rcll}
     \nabla \z_K &=&\nabla \w &\mbox{ at all vertices of }K,\\
    \disp \int_E \z_K\ d\ell &=& \bf0 &\mbox{ for each edge } E \mbox{ of }K,\\
     \z_K &=&\bf0 &\mbox{ at all 3 vertices and midpoints of 3 medians of } K.
   \end{array}
 \end{equation}
 Then, for a reference triangle $\hat K$ and an affine map $F:\hat K\longrightarrow K$, we have
 \begin{equation}
   \label{eq:v-norm-w}
   |\z_K|_{1,K} \le C |\z_K\circ F|_{1,\hat K} \le  C |\w\circ F|_{1,\hat K} \le C |\w|_{1,K}.
 \end{equation}

 If we define $\z\in [\PO4]^2$ by $\z\big|_K=\z_K$ for all $K\in\Th$,
 then $\z$ belongs to $[H_0^1(\O)]^2$,
 since derivatives of $\w$ along to edges are continuous.
  We note that $\w\neq\z$. If so, $\w={\bf{0}}$
  from the second and third conditions in \eqref{def:v-in-p4}.
  
 Thus, from \eqref{def:v-in-p4} and \eqref{eq:v-norm-w}, $\z$ satisfies
 \begin{equation}
   \label{eq:prop-v-vec}
   \w-\z\in V_{h,00}\setminus\{{\bf{0}}\},\quad |\z|_1 \le C |\w|_1,\quad
    (1,\div\z)_K=0 \mbox{ for all } K.
  \end{equation} 
  Then, the following comes from \eqref{eq:p2-p0-infsup} and \eqref{eq:prop-v-vec}, which completes the proof:
 \begin{equation*}
    \beta/(1+C) \|\cs_h\|_0 |\w-\z|_1\le \beta \|\cs_h\|_0 |\w|_1 \le (\cs_h,\div\w)
   = \left(\cs_h,\div(\w-\z)\right). 
 \end{equation*}
\end{proof}

\subsection{definition of piecewise constant component}
\begin{lemma}\label{lem:exist-phc}
  There exists a unique $\ph^\CC\in\PO0\cap L_0^2(\O)$ satisfying
  \begin{equation}
    \label{def:ph-const}
    (\ph^\CC,\div\v_h)=(\f,\v_h)-(\nabla\u_h,\nabla\v_h)-(p_h^\NS,\div\v_h)\quad
    \mbox{ for all } \v_h\in V_{h,00}.
  \end{equation}     
\end{lemma}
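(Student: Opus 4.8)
The plan is to prove existence and uniqueness of $\ph^\CC$ by reducing the linear system in \eqref{def:ph-const} to a square system and invoking the inf-sup condition from Lemma \ref{lem:V00-infsup}. First I would observe that \eqref{def:ph-const} is a finite-dimensional linear problem: we seek $\ph^\CC$ in the finite-dimensional space $\PO0\cap L_0^2(\O)$ and test against the finite-dimensional space $V_{h,00}$. Since $\dim(\PO0\cap L_0^2(\O))$ need not a priori equal $\dim V_{h,00}$, the clean way to get both existence and uniqueness is to view \eqref{def:ph-const} through the bilinear form $b(\cs_h,\v_h)=(\cs_h,\div\v_h)$ on $(\PO0\cap L_0^2(\O))\times V_{h,00}$ and show this form satisfies the two Babu\v{s}ka--Brezzi inf-sup conditions, so that for any right-hand side functional $\ell(\v_h)=(\f,\v_h)-(\nabla\u_h,\nabla\v_h)-(p_h^\NS,\div\v_h)$ on $V_{h,00}$ there is a unique $\ph^\CC$.

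The key steps, in order, are as follows. Step one: the first inf-sup condition is exactly Lemma \ref{lem:V00-infsup}, which says $\inf_{\cs_h}\sup_{\v_h} b(\cs_h,\v_h)/(\|\cs_h\|_0|\v_h|_1)\ge\beta>0$; in particular the map $\cs_h\mapsto b(\cs_h,\cdot)$ from $\PO0\cap L_0^2(\O)$ into $V_{h,00}'$ is injective, hence $\dim(\PO0\cap L_0^2(\O))\le\dim V_{h,00}$. Step two: I must check the transposed (second) inf-sup condition, i.e.\ that for every nonzero $\v_h\in V_{h,00}$ there is $\cs_h\in\PO0\cap L_0^2(\O)$ with $b(\cs_h,\v_h)\ne0$; equivalently, the only $\v_h\in V_{h,00}$ with $(\cs_h,\div\v_h)=0$ for all $\cs_h\in\PO0\cap L_0^2(\O)$ is $\v_h=\bf0$. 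But $(\cs_h,\div\v_h)=0$ for all such $\cs_h$ forces $\div\v_h$ to be $L^2$-orthogonal on each $K$ to constants modulo the global mean, and since $\v_h\in[H_0^1(\O)]^2$ gives $\int_\O\div\v_h=0$ automatically, this forces $\int_K\div\v_h\,dxdy=0$ for every $K\in\Th$. Step three: I would then argue this does \emph{not} immediately give $\v_h=\bf0$, so the two spaces in general have different dimensions and one cannot expect both inf-sup conditions — instead the correct route is to note that \eqref{def:ph-const} determines $\ph^\CC$ uniquely because $b$ is injective in its first argument (Step one), and to obtain existence one only needs the right-hand side functional to annihilate the kernel $N=\{\v_h\in V_{h,00}:(\cs_h,\div\v_h)=0\ \forall\cs_h\}$. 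Step four, then, is to verify this compatibility: if $\v_h\in N$, i.e.\ $\int_K\div\v_h=0$ for all $K$, I would need $(\f,\v_h)-(\nabla\u_h,\nabla\v_h)-(p_h^\NS,\div\v_h)=0$. This follows from \eqref{eq:dclv-SC} if $\v_h\in\Z_{h,0}$, from \eqref{cond:phK} once one checks $\v_h|_K\in\BB(K)$ relates to $p_h^\NS$ appropriately, and from the fact that such $\v_h$ with vanishing gradients at vertices and vanishing mean-divergence on each triangle decomposes into pieces handled by the velocity equation and the non-sting equation; more simply, $p_h^\NS\in\NS_h$ pairs with $\div\v_h$ through \eqref{def:nonsting}, and a $\v_h\in N$ can be written (using $\div\v_h\in\NS_h\oplus\CS_h$ restricted, with the $\CS_h$-part zero on each $K$) so that the combination telescopes to zero via \eqref{eq:dclv-SC} and \eqref{cond:phK}.

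Concretely, I would finish by the standard argument: let $N$ be the above kernel and $M=N^{\perp_{|\cdot|_1}}$ in $V_{h,00}$; the restricted inf-sup $\inf_{\cs_h}\sup_{\v_h\in M}b(\cs_h,\v_h)/(\|\cs_h\|_0|\v_h|_1)\ge\beta$ still holds (the supremizing $\v_h$ in Lemma \ref{lem:V00-infsup} can be replaced by its $M$-component since the $N$-component contributes nothing to $b$), and now $b:\,(\PO0\cap L_0^2(\O))\times M\to\R$ satisfies both inf-sup conditions with $\dim(\PO0\cap L_0^2(\O))=\dim M$; hence for the functional $\ell$ restricted to $M$ there is a unique solution $\ph^\CC$, and because $\ell$ annihilates $N$ (the compatibility of Step four) this $\ph^\CC$ in fact satisfies \eqref{def:ph-const} tested against all of $V_{h,00}$, and it is unique by Step one.

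The main obstacle I expect is Step four, the compatibility of the right-hand side with the kernel $N$: one must show that every $\v_h\in V_{h,00}$ with $\int_K\div\v_h\,dxdy=0$ for all $K\in\Th$ makes $(\f,\v_h)-(\nabla\u_h,\nabla\v_h)-(p_h^\NS,\div\v_h)$ vanish. I would attack this by decomposing such a $\v_h$: its divergence lies in $\bigoplus_K\NS(K)$ (the constant component on each $K$ being zero by hypothesis and sting components being absent since $\div\v_h$ vanishes at vertices as $\nabla\v_h=\bf0$ there), so the pairing $(p_h^\NS,\div\v_h)$ is controlled by \eqref{cond:phK} applied edge-triangle-wise after splitting $\v_h$ into a $\Z_{h,0}$-part plus local bubbles in $\BB(K)$; the $\Z_{h,0}$-part is killed by \eqref{eq:dclv-SC} and each $\BB(K)$-bubble is killed by \eqref{cond:phK}. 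Making this decomposition precise — in particular that $V_{h,00}$ is spanned by $\Z_{h,0}\cap V_{h,00}$ together with $\bigoplus_K\BB(K)$ modulo $N$ — is the delicate point, and is presumably where the author's proof (which continues past this excerpt) does its real work.
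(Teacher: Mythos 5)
Your approach is correct in outline but genuinely different from the paper's. The paper proves existence constructively: it solves the full coupled Falk--Neilan Stokes system on $V_{h,0}\times Q_{h,00}$ to obtain a discrete pressure $r_h$, notes that the accompanying velocity coincides with $\u_h$ from \eqref{eq:dclv-SC}, decomposes $r_h=r_h^\NS+r_h^\ST+r_h^\CS$, observes that the sting part drops out against $V_{h,00}$ by the quadrature rule \eqref{def:sting}, identifies $r_h^\NS$ with $\ph^\NS$ by local uniqueness in $\NS(K)$, and finally exhibits $r_h^\CS-\m(r_h^\CS)$ as the desired $\ph^\CC$. Your proposal instead stays purely variational: uniqueness from Lemma \ref{lem:V00-infsup} (same as the paper), and existence by a Fredholm-alternative argument after checking that the right-hand-side functional vanishes on the kernel $N=\{\v_h\in V_{h,00}:\int_K\div\v_h=0\ \forall K\}$. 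The step you flag as delicate does in fact close cleanly: for $\v_h\in N$ one has $\div\v_h\big|_K\in\NS(K)$ (zero vertex values since $\nabla\v_h=\mathbf 0$ at vertices, zero mean by assumption), so one can pick $\mathbf b_K\in\BB(K)$ with $\div\mathbf b_K=\div\v_h\big|_K$; since $\mathbf b_K$ vanishes on $\p K$ its gradient also vanishes at all vertices, so $\mathbf b_K\in V_{h,00}$ and $\z_h:=\v_h-\sum_K\mathbf b_K\in V_{h,0}$ is divergence free, hence $\z_h\in\Z_{h,0}$ by exactness of the Stokes complex $\Sigma_{h,0}\xrightarrow{\curl}V_{h,0}\xrightarrow{\div}Q_{h,00}$; then \eqref{eq:dclv-SC} kills the $\z_h$ contribution and \eqref{cond:phK} kills each $\mathbf b_K$ contribution. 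Both proofs therefore rest on the same nontrivial ingredient (Stokes complex exactness), but the paper invokes it to build an explicit candidate via a global coupled solve while you use it only as a structural decomposition of the test-function kernel; your route avoids introducing $(\widetilde\u_h,r_h)$ and the spaces $Q_{h,0},Q_{h,00}$ at all, at the cost of the slightly more abstract kernel-annihilation bookkeeping. Either way is sound.
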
   
\begin{proof}
 The uniqueness comes from the inf-sup condition in Lemma  \ref{lem:V00-infsup}.
 For the existence, let
  \begin{align}
    \label{def:vtx-c0q}
    \begin{split}
      \QQ_{h,0}&=\{q_h\in \PO3\cap L_0^2(\O)\ :\ q_h \mbox{ is continuous at all vertices in }\Th\},\\
      {\QQ_{h,00}}&=\{q_h\in \QQ_{h,0}\ :\ q_h \mbox{ vanishes at all corners of }\p\O\}.
    \end{split}
  \end{align}
  Then, there exists a unique $(\widetilde{\u}_h,r_h)\in V_{h,0}\times  {\QQ_{h,00}}$ satisfying
  the following discrete Stokes equation:
  \begin{equation}
  \label{eq:uhrh-condi}
  (\nabla\widetilde{\u}_h, \nabla\v_h) +(r_h,\div\v_h) +(q_h,\div\widetilde{\u}_h) =(\f,\v_h)\quad\mbox{ for all }
  (\v_h,q_h)\in V_{h,0}\times  {\QQ_{h,00}},
\end{equation}
since the following Stokes complex is exact for
$\Sigma_{h,0}, V_{h,0}, {\QQ_{h,00}}$ in \eqref{def:Argyris}, \eqref{def:vtx-c0}, \eqref{def:vtx-c0q}, respectively \cite{Falk2013}:
\begin{equation*}
  0 \longrightarrow \Sigma_{h,0}\xrightarrow{\curl} V_{h,0}
  \xrightarrow{\div} {\QQ_{h,00}} \longrightarrow 0.
\end{equation*}

From \eqref{def:zh0}, we note that $\widetilde{\u}_h\in V_{h,0}$ in  \eqref{eq:uhrh-condi} coincides with
the previous $\u_h\in \Z_{h,0}$ in \eqref{eq:dclv-SC}.
Thus $r_h\in{\QQ_{h,00}}$ in  \eqref{eq:uhrh-condi} satisfies that
\begin{equation}
  \label{cond:rh-main}
  (r_h,\div\v_h)=(\f,\v_h)-(\nabla\u_h,\nabla\v_h)\quad\mbox{ for all } \v_h\in V_{h,0}. 
\end{equation}

Similarly to \eqref{eq:decom-pihp}, decompose $r_h$ into
\begin{equation}
  \label{eq:decom-rh}
  r_h=  r_h^\NS  + r_h^\ST + r_h^\CS\quad\mbox{ for }
  r_h^{\NS}\in \NS_h,\ r_h^\ST \in \ST_h, \ r_h^{\CS}\in \CS_h.
\end{equation}
From the quadrature rule in \eqref{def:sting} and  definition of $V_{h,00}$ in \eqref{def:vtx-c0}, we have
   \begin{equation}
     \label{eq:rhs-v00}
     (r_h^\ST, \div\v_h)=0\quad\mbox{ for all } \v_h\in V_{h,00}.
   \end{equation}   
Then, by  \eqref{def:STNSCS} and \eqref{cond:rh-main}-\eqref{eq:rhs-v00},
     $r_h^\CS-\m(r_h^\CS)\in\PO0\cap L_0^2(\O)$ satisfies
     \begin{equation}\label{eq:rhc-vh00}
       \left(r_h^\CS-\m(r_h^\CS),\div\v_h\right)
       =(\f,\v_h)-(\nabla\u_h,\nabla\v_h)-(r_h^\NS,\div\v_h)\quad
       \mbox{ for all } \v_h\in V_{h,00}.
     \end{equation}
     
For each triangle $K$, we note $\BB(K)\subset V_{h,00}$. 
Thus, from \eqref{def:nonsting}, \eqref{eq:prop-tb}, \eqref{eq:rhc-vh00}, we have
\begin{equation}
  \label{cond:rhK}
   (r_h^\NS\big|_K,\div\v_h)=(\f,\v_h)-(\nabla\u_h,\nabla\v_h)\quad\mbox{ for all }
    \v_h\in \BB(K).
  \end{equation}
Since $\ph^K\in\NS(K)$ satisfying \eqref{cond:phK} is unique,
\eqref{cond:rhK} implies that $r_h^\NS$ in \eqref{eq:rhc-vh00} coincides with $\ph^\NS$
in \eqref{def:ph-const}.
   \end{proof}  
   \begin{lemma}\label{lem:phcc-error}
     Define the piecewise constant component as  $\ph^\CC$ in Lemma \ref{lem:exist-phc}.
     Then for $\pihp^\CS$ in \eqref{eq:decom-pihp}, we estimate
     \begin{equation}\label{eq:lem-phcc-error}
       \|\pihp^\CS-\m(\pihp^\CS) -\ph^{\CC}\|_0  \le  C (|\u-\u_h|_1 + \|p-\pihp\|_0).
     \end{equation}
   \end{lemma}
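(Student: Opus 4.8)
The plan is to derive a Galerkin-type error equation for
$e_h^\CC:=\pihp^\CS-\m(\pihp^\CS)-\ph^\CC\in\PO0\cap L_0^2(\O)$
tested against $V_{h,00}$, and then convert control of $(e_h^\CC,\div\v_h)$ into control of $\|e_h^\CC\|_0$ by means of the inf-sup condition of Lemma \ref{lem:V00-infsup}.

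First I would test \eqref{eq:pihp-0} with an arbitrary $\v_h\in V_{h,00}\subset[H_0^1(\O)]^2$ and insert the decomposition $\Pi_hp=\pihp^\NS+\pihp^\ST+\pihp^\CS$. Two reductions are the crux of this step: since every $\v_h\in V_{h,00}$ has $\nabla\v_h$, hence $\div\v_h$, vanishing at all vertices of $\Th$, the quadrature rule \eqref{def:sting} forces $(\pihp^\ST,\div\v_h)=0$; and since $(1,\div\v_h)_\O=0$ for $\v_h\in[H_0^1(\O)]^2$, one may replace $\pihp^\CS$ by $\pihp^\CS-\m(\pihp^\CS)$ in the pairing with $\div\v_h$. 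This yields
\[(\pihp^\CS-\m(\pihp^\CS),\div\v_h)=(\f,\v_h)-(\nabla\u,\nabla\v_h)-(\pihp^\NS,\div\v_h)-(p-\pihp,\div\v_h)\]
for all $\v_h\in V_{h,00}$. Subtracting the defining identity \eqref{def:ph-const} of $\ph^\CC$ — recalling that the $\ph^\NS$ appearing there is exactly the non-sting component of Lemma \ref{lem:phns-error}, as established in Lemma \ref{lem:exist-phc} — gives the error equation
\[(e_h^\CC,\div\v_h)=-(\nabla(\u-\u_h),\nabla\v_h)-(\pihp^\NS-\ph^\NS,\div\v_h)-(p-\pihp,\div\v_h),\qquad\v_h\in V_{h,00}.\]

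Next I would apply Lemma \ref{lem:V00-infsup} to $e_h^\CC\in\PO0\cap L_0^2(\O)$ to select a nontrivial $\v_h\in V_{h,00}$ with $\beta\|e_h^\CC\|_0\,|\v_h|_1\le(e_h^\CC,\div\v_h)$, and bound the three terms on the right of the error equation by the Cauchy--Schwarz inequality together with $\|\div\v_h\|_0\le C|\v_h|_1$. The first and third terms are directly $\le\big(|\u-\u_h|_1+C\|p-\pihp\|_0\big)|\v_h|_1$. For the middle term I would sum the element-wise estimate \eqref{eq:lem-phns} of Lemma \ref{lem:phns-error} over $K\in\Th$ to obtain $\|\pihp^\NS-\ph^\NS\|_0\le C(|\u-\u_h|_1+\|p-\pihp\|_0)$, so that term is controlled by the same quantity times $|\v_h|_1$. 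Dividing through by $\beta|\v_h|_1$, which is nonzero since $|\cdot|_1$ is a norm on $V_{h,00}\subset[H_0^1(\O)]^2$ and $\v_h\ne 0$, yields \eqref{eq:lem-phcc-error}.

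I do not anticipate a genuine obstacle: this is the standard ``inf-sup plus orthogonality'' template. The only points requiring care are the bookkeeping of which parts of $\pihp$ survive the pairing with $\div\v_h$ for $\v_h\in V_{h,00}$ — namely the vanishing of the sting part via the quadrature rule and of the mean value of the piecewise constant part — and the passage from the local non-sting bound of Lemma \ref{lem:phns-error} to a global $L^2$ estimate.
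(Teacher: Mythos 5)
Your proposal is correct and follows essentially the same route as the paper's proof: derive the identity $(\pihp^\CS,\div\v_h)=(\f,\v_h)-(\nabla\u,\nabla\v_h)-(\pihp^\NS,\div\v_h)-(p-\pihp,\div\v_h)$ for $\v_h\in V_{h,00}$ by killing the sting part via the quadrature rule \eqref{def:sting}, subtract \eqref{def:ph-const} to get the error equation for $e_h=\pihp^\CS-\m(\pihp^\CS)-\ph^\CC$, then invoke the inf-sup condition of Lemma \ref{lem:V00-infsup} and Lemma \ref{lem:phns-error} to close the estimate. You simply spell out a few points the paper leaves implicit, such as $(\m(\pihp^\CS),\div\v_h)=0$ and the summation of the element-wise bound in Lemma \ref{lem:phns-error} to a global $L^2$ bound.
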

   \begin{proof}
     By  \eqref{eq:pihp-0} and quadrature rule in  \eqref{def:sting},
     $\pihp^\CS$ satisfies for all $\v_h\in V_{h,00}$,
     \begin{equation}
       \label{eq:lem-phcc}
       (\pihp^\CS,\div\v_h)=(\f,\v_h)-(\nabla\u,\nabla\v_h)-(\pihp^\NS,\div\v_h)-(p-\pihp,\div\v_h).
     \end{equation}
    Set $e_h=\pihp^\CS-\m(\pihp^\CS) -\ph^{\CC}\in \PO0\cap L_0^2(\O)$.
    Then, from  \eqref{def:ph-const} and \eqref{eq:lem-phcc}, it satisfies
     \begin{equation}
       \label{eq:lem-phcc-2}
       (e_h,\div\v_h)=-(\nabla\u-\nabla\u_h,\nabla\v_h)-(\pihp^\NS-\ph^\NS,\div\v_h)-(p-\pihp,\div\v_h),
     \end{equation}
     for all  $\v_h\in V_{h,00}$.
     
  From Lemma  \ref{lem:V00-infsup}, there exists a nontrivial $\v_h\in V_{h,00}$ such that
  \begin{equation}
    \label{eq:lem-phcc-22}
    \beta \|e_h\|_0 |\v_h|_1 \le (e_h,\div\v_h)\quad \mbox{ for }\beta>0 \mbox{ regardless of } h.
  \end{equation}
  Then \eqref{eq:lem-phcc-error} comes from
  \eqref{eq:lem-phcc-2}, \eqref{eq:lem-phcc-22} and Lemma \ref{lem:phns-error}.
\end{proof}

\subsection{local calculation of piecewise constant component }
For each triangle $K\in\Th$, let $C_K$ be a constant such that
\begin{equation} \label{eq:qhc-c1c2}
  C_K=\ph^\CC\big|_{K}.
\end{equation}
If $K_1, K_2$ are two adjacent triangles sharing an edge,
there exists a test function $\v_h\in V_{h,00}$ such that
\begin{equation}
  \label{cond:vh-for-qhc}
\mbox{ the support of } \v_h \mbox{ is in } K_1\cup K_2,\quad   \int_{K_j}\div\v_h\ dxdy=(-1)^{j+1}, j=1,2.
 \end{equation}
 Then from    \eqref{def:ph-const}, \eqref{eq:qhc-c1c2}, \eqref{cond:vh-for-qhc},
 we can calculate the adjacent difference by
\begin{equation}
  \label{eq:lhs-qhc-c1c2}
  C_{K_1}-C_{K_2}=(\f,\v_h)-(\nabla\u_h,\nabla\v_h)-(p_h^\NS,\div\v_h).
\end{equation}

Fix a triangle $K_0$ and denote $C_0=\ph^\CC\big|_{K_0}$.
Then we can calculate $C_K-C_0$ for all triangles $K\in\Th$
by the following iterative algorithm.
\begin{enumerate}
\item[(i)] Set $\widetilde\O=K_0$ and $\mathcal{V}=\emptyset$.
\item[(ii)] Choose a vertex $\V\notin \mathcal V$ on the boundary of $\widetilde\O$.
\item[(iii)]  Calculate $C_K-C_0$ for all triangles $K\subset\KK(\V)$
  by adding adjacent differences in \eqref{eq:lhs-qhc-c1c2}.
\item[(iv)] Update $\widetilde\O=\widetilde\O\cup \KK(\V)$, $\mathcal{V}=\mathcal{V}\cup\{\V\}$
  and go to (ii), if $\widetilde\O\neq\O$.
\end{enumerate}

\noindent
The unknown  $C_{0}$ is calculated
from the knowledges of $C_K-C_{0}$ for all $K\in\Th$, since
\[0=\int_\O \ph^\CC\ dxdy = \sum_{K\in\Th}C_K|K|
  = \sum_{K\in\Th}(C_K-C_{0})|K|+C_{0}|\O|. \]
\begin{remark}
  For an efficient choice of $\V$ in (ii), we could use a structure of the mesh
  such as a hierarchy. Dividing $\O$ into subdomains, the above algorithm 
  would be easily parallelized. 
\end{remark}

\section{A locally calculable $P^3$-pressure}\label{sec:def-p3-pressure}
We have prepared the locally calculable components $\ph^\NS, \ph^\ST, \ph^\CC$
in Lemma \ref{lem:phns-error}, \ref{lem:phst-error}, \ref{lem:phcc-error}, respectively.
At last, we arrive at the following
final definition of a pressure $\ph\in\PO3\cap L_0^2(\O)$: 
\begin{equation}
  \label{def:ph}
  \ph=\ph^\NS+ \ph^\ST  + \ph^\CC -\m(\ph^\ST).
\end{equation}
\begin{theorem}\label{thm:last}
  Let $(\u,p)\in [H_0^1(\O)]^2 \times L_0^2(\O)$ satisfy \eqref{prob:conti}.
  Then, for $\ph$ defined in  \eqref{def:ph},
  we estimate
\begin{equation}\label{eq:thm-last}
   \|p-\ph\|_0 \le C h^4(|\u|_5+|p|_4),
 \end{equation}
  if $(\u,p)\in [H^5(\O)]^2\times H^4(\O)$.
\end{theorem}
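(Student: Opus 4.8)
The plan is to estimate $\|p-\ph\|_0$ by splitting the error through the Hermite interpolant $\pihp$ and using the orthogonal decomposition \eqref{eq:decom-p3omega}. First I would write
\[
\|p-\ph\|_0 \le \|p-\pihp\|_0 + \|\pihp-\ph\|_0,
\]
and control the first term immediately by \eqref{eq:pihp-inter-error}, which gives $\|p-\pihp\|_0 \le Ch^4|p|_4$. So the whole game is the second term, $\|\pihp-\ph\|_0$. Here I would use the decompositions \eqref{eq:decom-pihp} and \eqref{def:ph}: subtracting them,
\[
\pihp-\ph = (\pihp^\NS-\ph^\NS) + (\pihp^\ST-\ph^\ST) + \bigl(\pihp^\CS - \ph^\CC - \m(\ph^\ST)\bigr).
\]
The subtlety is the constant/mean bookkeeping: $\ph$ has the correction $-\m(\ph^\ST)$ precisely so that $\ph \in L_0^2(\O)$, whereas $\ph^\CC \in \PO0\cap L_0^2(\O)$ already, and Lemma \ref{lem:phcc-error} controls $\pihp^\CS - \m(\pihp^\CS) - \ph^\CC$, not $\pihp^\CS - \ph^\CC$ itself. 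So I would regroup as
\[
\pihp - \ph = (\pihp^\NS - \ph^\NS) + (\pihp^\ST - \ph^\ST - \m(\pihp^\ST)) + (\pihp^\CS - \m(\pihp^\CS) - \ph^\CC) + \bigl(\m(\pihp^\ST) - \m(\ph^\ST) + \m(\pihp^\CS)\bigr),
\]
and observe that the last scalar in parentheses equals $\m(\pihp^\ST + \pihp^\CS) - \m(\ph^\ST)$; since $\pihp = \pihp^\NS + \pihp^\ST + \pihp^\CS$ with $\m(\pihp^\NS)=0$ (non-sting functions have zero integral on each $K$ by \eqref{eq:prop-tb}), and since $\m(p)=0$ while $\m(\ph)=0$ by construction, this scalar is actually $O(\|p-\pihp\|_0)$ — or one can simply note it is a constant whose $L^2(\O)$ norm is bounded by $\|\pihp - \ph\|_0$ restricted to constants and absorb it. Cleanest is: since both $p$ and $\ph$ lie in $L_0^2$, and $p - \ph = (p-\pihp) + (\pihp - \ph)$, the mean of $\pihp - \ph$ equals $-\m(p-\pihp)$, hence $|\m(\pihp-\ph)| \le Ch^4|p|_4$.

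Next I would bound the three genuine component errors. The term $\|\pihp^\NS - \ph^\NS\|_0$ is handled by summing \eqref{eq:lem-phns} over $K\in\Th$ (finite overlap is trivial — the pieces are disjoint), giving $\|\pihp^\NS-\ph^\NS\|_0 \le C(|\u-\u_h|_1 + \|p-\pihp\|_0)$. The term $\|\pihp^\ST - \ph^\ST\|_0$ is exactly Lemma \ref{lem:phst-error}: $\|\pihp^\ST-\ph^\ST\|_0 \le C(|\u-\u_h|_1 + \|p-\pihp\|_0)$. The term $\|\pihp^\CS - \m(\pihp^\CS) - \ph^\CC\|_0$ is Lemma \ref{lem:phcc-error}, bounded by $C(|\u-\u_h|_1 + \|p-\pihp\|_0)$. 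Collecting everything,
\[
\|\pihp - \ph\|_0 \le C\bigl(|\u-\u_h|_1 + \|p-\pihp\|_0\bigr).
\]
Finally, I would invoke Theorem \ref{th:vel-error} for $|\u-\u_h|_1 \le Ch^4|\u|_5$ and \eqref{eq:pihp-inter-error} for $\|p-\pihp\|_0 \le Ch^4|p|_4$, and combine with the first split to conclude $\|p-\ph\|_0 \le Ch^4(|\u|_5 + |p|_4)$, which is \eqref{eq:thm-last}.

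The main obstacle I anticipate is not any single estimate — each of the three component bounds is already a stated lemma — but rather the orthogonal-decomposition bookkeeping: making sure that the $\m(\cdot)$ corrections line up so that $\pihp - \ph$ decomposes into pieces each of which is covered verbatim by Lemma \ref{lem:phns-error}, \ref{lem:phst-error}, or \ref{lem:phcc-error}, without an uncontrolled leftover constant. The key observations that make this work are (i) $\NS_h$-functions and $\CS_h$-functions have the mean-zero / constant structure recorded in \eqref{eq:prop-tb} and \eqref{def:STNSCS}, (ii) both $p$ and $\ph$ are genuinely in $L_0^2(\O)$, so the global mean of $\pihp - \ph$ is just $-\m(p - \pihp)$ and is $O(h^4)$, and (iii) orthogonality of the decomposition \eqref{eq:decom-p3omega} lets one pass from the global $L^2$ norm of $\pihp - \ph$ to the sum of squared norms of the three components, so no cross terms intervene. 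Once that is set up, the proof is a two-line assembly of the three lemmas plus Theorem \ref{th:vel-error} and \eqref{eq:pihp-inter-error}.
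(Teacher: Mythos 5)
Your proposal is correct and follows essentially the same route as the paper: split through $\Pi_h p$, expand $\Pi_h p - p_h$ component-wise via \eqref{eq:decom-pihp} and \eqref{def:ph}, invoke Lemma \ref{lem:phns-error}, Lemma \ref{lem:phst-error}, and Lemma \ref{lem:phcc-error} for the three component errors, and control the leftover constant by a mean-value argument, then finish with Theorem \ref{th:vel-error} and \eqref{eq:pihp-inter-error}. The paper's version of the constant bookkeeping is \eqref{eq:thm-last-2}--\eqref{eq:thm-last-3}: the scalar equals $\m(\ph^\ST-\pihp^\ST)+\m(\pihp)$, both bounded via $|\m(f)|\le|\O|^{-1/2}\|f\|_0$; your "cleanest" observation $\m(\pihp-\ph)=-\m(p-\pihp)$ achieves the same thing (your displayed regrouping has a sign slip, producing $-\m(\ph^\ST)$ where $+\m(\ph^\ST)$ should appear, but this does not affect the viable alternative you actually use). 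One small correction to observation (iii): the decomposition \eqref{eq:decom-p3omega} gives $\NS_h\perp(\ST_h\oplus\CS_h)$ only, not mutual orthogonality of all three summands, so you cannot pass to a Pythagorean sum of squared norms; the triangle inequality, which is what both you and the paper actually use, is the right tool.
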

\begin{proof}
  By \eqref{eq:decom-pihp} and \eqref{def:ph}, we expand
  \begin{equation}
    \label{eq:thm-last-1}
    \pihp-\ph=\left(\pihp^\NS -\ph^\NS\right) +\left(\pihp^\ST-\ph^\ST\right)+
    \left(\pihp^\CS-\ph^\CC +\m(\ph^\ST)\right).
  \end{equation}
  Since $  \m(\pihp)=\m(\pihp^\CS)+\m(\pihp^\ST)$, the last term in \eqref{eq:thm-last-1}
  makes
  \begin{equation}
    \label{eq:thm-last-2}
\pihp^\CS-\ph^\CC +\m(\ph^\ST)=\left(\pihp^\CS-\m(\pihp^\CS)-\ph^\CC \right)
  +\m(\ph^\ST-\pihp^\ST)+ \m(\pihp).
\end{equation}
We note that
  \begin{equation}
    \label{eq:thm-last-3}
    |\m(\pihp)|=|\m(\pihp-p)| \le |\O|^{-1/2} \|\pihp-p\|_0.
  \end{equation}
  Therefore, \eqref{eq:thm-last} is established from \eqref{eq:pihp-inter-error},
  \eqref{eq:thm-last-1}-\eqref{eq:thm-last-3}
  and Lemma \ref{lem:phns-error}, \ref{lem:phst-error}, \ref{lem:phcc-error},
  and Theorem \ref{th:vel-error}.
\end{proof}

\section{A summary of the method}\label{sec:sum}
\begin{description}
  \item[Step 1.] 
Calculate  $\u_h\in \Z_{h,0}$ in \eqref{def:zh0} such that
\begin{equation*}
  (\nabla \u_h, \nabla \v_h)= (\f,\v_h)\quad \mbox{ for all } \v_h\in \Z_{h,0}.
\end{equation*}

\item[Step 2.]
For each triangle $K\in\Th$, calculate $p_h^K\in \NS(K)$ in \eqref{def:nonsting}
such that
\begin{equation*}
  (p_h^K,\div\v_h)=(\f,\v_h)-(\nabla\u_h,\nabla\v_h)\quad\mbox{ for all }
    \v_h\in \BB(K) \mbox{ in } \eqref{def:BK}.
  \end{equation*}
  Then, denote
  \[ p_h^\NS = \sum_{K\in\Th} p_h^K.\]
  
\item[Step 3.]
  For each regular vertex $\V$  as in Figure \ref{fig:reg}, calculate the least square solution  $\ph^ \V\in \ST(\V)$ in \eqref{def:STV}
  for a system of $2\JJ$ equations:
\begin{equation*}\arraycolsep=1.4pt\def\arraystretch{1.6}
  \begin{array}{lll}
(p_h^\V, \div\w_h^{\t_j})&=&(\f,\w_h^{\t_j})-(\nabla\u_h, \nabla\w_h^{\t_j})-(p_h^\NS, \div\w_h^{\t_j}),\\
    (p_h^\V, \div\w_h^{\t_j^\perp})&=&(\f,\w_h^{\t_j^\perp})-(\nabla\u_h, \nabla\w_h^{\t_j^\perp})-(p_h^\NS, \div\w_h^{\t_j^\perp}),
  \end{array}
\end{equation*}
for $j=1,2,\cdots,\JJ$, 
where $\JJ$ is the number of interior edges of $\V$ and
$\w_h^{\t_j}, \w_h^{\t_j^\perp}$ are test functions defined in
\eqref{cond:wk}-\eqref{def:wht}.

\item[Step 4.]
  For each nearly singular vertex $\V$ meeting an interior edge  as in Figure \ref{fig:sing},
  calculate the least square solution  $\ph^ \V\in \ST(\V)$ for a system of $2\JJ$ equations:
\begin{equation*}
  \arraycolsep=1.4pt\def\arraystretch{1.6}
  \begin{array}{lll}
  (p_h^\V, \div\w_h^{\t_j})&=&(\f,\w_h^{\t_j})-(\nabla\u_h, \nabla\w_h^{\t_j})-(p_h^\NS, \div\w_h^{\t_j}),\\
    \jump_{\jjp} (p_h^\V) &=& -\jump_{\jjp}(\ph^\NS+p_h^{\V_j} ),\quad
  \end{array}
\end{equation*}
for $j=1,2,\cdots,\JJ$,
where $\jump_{\jjp}$ is the jump defined in \eqref{def:jumpkk1}
and $\V_1, \V_2,\cdots,\V_\JJ$ are all regular vertices sharing interior edges with $\V$.

\item[Step 5.]
For each nearly singular vertex $\V$ meeting no interior edge  as in Figure \ref{fig:dead}, calculate
$\ph^ \V\in \ST(\V)$ such that
\begin{equation*}
  \jump(\ph^\V)=-\jump(\ph^\NS+\ph^{\W_1}+\ph^{\W_2}+\ph^{\W_3}),
\end{equation*}
where $\jump$ is the jump defined in \eqref{def:jump12n}
and $\W_1, \W_2, \W_3$ are all vertices of the triangle whose intersection with $\KK(\V)$ is an edge.

\item[Step 6.]
Calculate $\ph^\CC \in \PO0\cap L_0^2(\O)$ such that,
for every two triangles $K_1, K_2$ sharing an edge,
\begin{equation*}
   \ph^\CC\big|_{K_1}- \ph^\CC\big|_{K_2}=(\f,\v_h)-(\nabla\u_h,\nabla\v_h)-(p_h^\NS,\div\v_h),
\end{equation*}
where $\v_h\in V_{h,00}$ in \eqref{def:vtx-c0} is a test function
satisfying \eqref{cond:vh-for-qhc}.

\item[Step 7.]
  Denote
 \[ \ph^{\ST} = \sum_{\V:\mbox{\rm{vertex}}} \ph^{\V}.\]
 Then, define $\ph\in\PO3\cap L_0^2(\O)$ as
\[ \ph=\ph^\NS +\ph^\ST + \ph^\CC -\oint_\O \ph^\ST\ dxdy.  \]
\end{description}  

\section{Numerical tests}
All of the numerical tests were done with the velocity $\u$ and pressure $p$ on  $\O=[0,1]^2$ such that
\[ \u=\left(s(x)s'(y), -s'(x)s(y)\right),\quad
  p=\sin(4\pi x)e^{\pi y},\quad\mbox{ where }s(t)=(t^2-t)\sin(2\pi t).  \]

\subsection{suggested method over singular meshes}
We tested the suggested method over singular meshes as in Figure \ref{fig:mesh}.
For triangulations, we formed first the meshes of uniform squares
over $\O$, then added one exactly singular vertex in every non-corner square.
For the corners, we made them singular as in Figure \ref{fig:mesh},
an example of $8\times 8\times 4$ mesh.

We calculated locally the components $\ph^\NS, \ph^\CC$ 
as well as $\ph^\V$ for all vertices $\V$ in order:
regular vertices, interior singular vertices, dead corners. 
In Figure \ref{fig:ph}, their superposition on  making $\ph$ in \eqref{def:ph} 
are depicted for the mesh in Figure \ref{fig:mesh}.

The errors in Table \ref{table} show the optimal order of convergence,
expected in Theorem \ref{th:vel-error} and \ref{thm:last}.
We used a direct linear solver in {\texttt {LAPACK}}
on solving the problems \eqref{eq:dclv-SC} for $\u_h$
in double precision.

\begin{figure}[ht]
\centering
\includegraphics[width=0.32\linewidth]{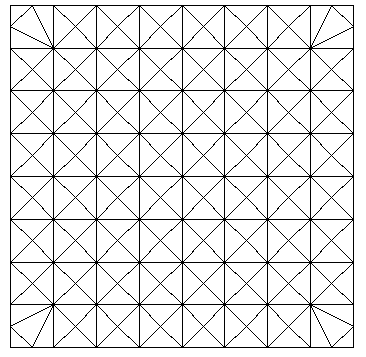}
\caption{$8\times8\times4$ singular mesh, each unit square has a singular vertex}
\label{fig:mesh}
\end{figure}

\begin{table}
  \centering
\begin{tabular}{r || c c || c c }
\hline
  mesh\hspace{3mm}
  & \hspace{1mm}$|\u-\u_h|_1$\hspace{1mm} & order\hspace{1mm}
     & \hspace{1mm}$\|p-p_h\|_0$\hspace{1mm}
  & order \\
\hline
 8 x 8 x 4 & 7.3894E-4  &  & 4.3010E-3 &   \\ 
 16 x 16 x 4 & 3.7236E-5  &4.31  & 1.8565E-4  &4.53  \\
  32 x 32 x 4 &2.2793E-6  &4.03  &1.0805E-5  & 4.10 \\
   64 x 64 x 4 &1.3859E-7 & 4.04 & 6.5962E-7  & 4.03 \\ 
\hline
\end{tabular}
\caption{\label{table}the errors for the suggested method over singular meshes as in Figure \ref{fig:mesh}}
\end{table}

\subsection{comparison with mixed FEM}
We calculated the discrete solutions
$(\u_h^{\rm{FN}}, \ph^{\rm{FN}})$, $(\u_h^{\rm{SV}}, \ph^{\rm{SV}})$
and $(\u_h, p_h)$
by the Falk-Neilan \cite{Falk2013}, Scott-Vogelius \cite{Guzman2018, Scott1985} and  suggested methods, respectively,
over regular meshes  as in Figure \ref{fig:mesh23} and
nearly singular meshes  as in  Figure \ref{fig:mesh99}.
For easy comparison,
a common  direct linear solver was used for  all involving linear systems.

As shown in Table \ref{table-2:3v} and \ref{table-99:100v},
the suggested and Falk-Neilan methods were almost same for the velocity,
since the divergence-free subspace of Falk-Neilan for $\u_h^{\rm{FN}}$
is slightly different to  $\Z_{h,0}$ for $\u_h$ in \eqref{def:zh0}
by merely a few elements for the corners.
In those tables, a little advantage was lying on the Scott-Vogelius method
as expected from its larger divergence-free subspace.

For the pressure, the suggested and Falk-Neilan methods offered more favorable errors
as in Table \ref{table-2:3p} and  \ref{table-99:100p}.
It is acceptable since they reflect some continuity of pressure.

The results in Table \ref{table-99:100p} implied that
the pressures by Scott-Vogelius were ruined over nearly singular meshes.
It is also confirmed in Figure  \ref{fig:3press},
where the pressures calculated over the $8\times8\times4$ nearly singular mesh 
are depicted.
This unstable phenomena is well known and recently turns out due to
the characteristic of the sting function on the singular vertex.
Based on that,
we could recover a stable pressure from the ruined one
by simple postprocess utilizing vertex continuity of pressure \cite{Park2020}.

It is interesting that
the results over nearly singular meshes outperformed those over regular meshes,
except the pressures by Scott-Vogelius.
The reason why is that
the largest triangles  in Figure \ref{fig:mesh99} are smaller
than those in Figure \ref{fig:mesh23}.

In all the tests, the suggested method was comparable with other two mixed finite element methods,
while it would cost less, since it used only local computations for the pressures.

\begin{figure}[ht]
  \centering
  \subfloat[$\O$]
  {\includegraphics[width=0.32\textwidth]{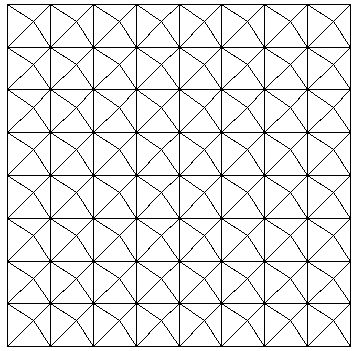}}\hspace{15mm}
 \subfloat[unit square in $\O$]{\raisebox{4ex}
  {\includegraphics[width=0.23\textwidth]{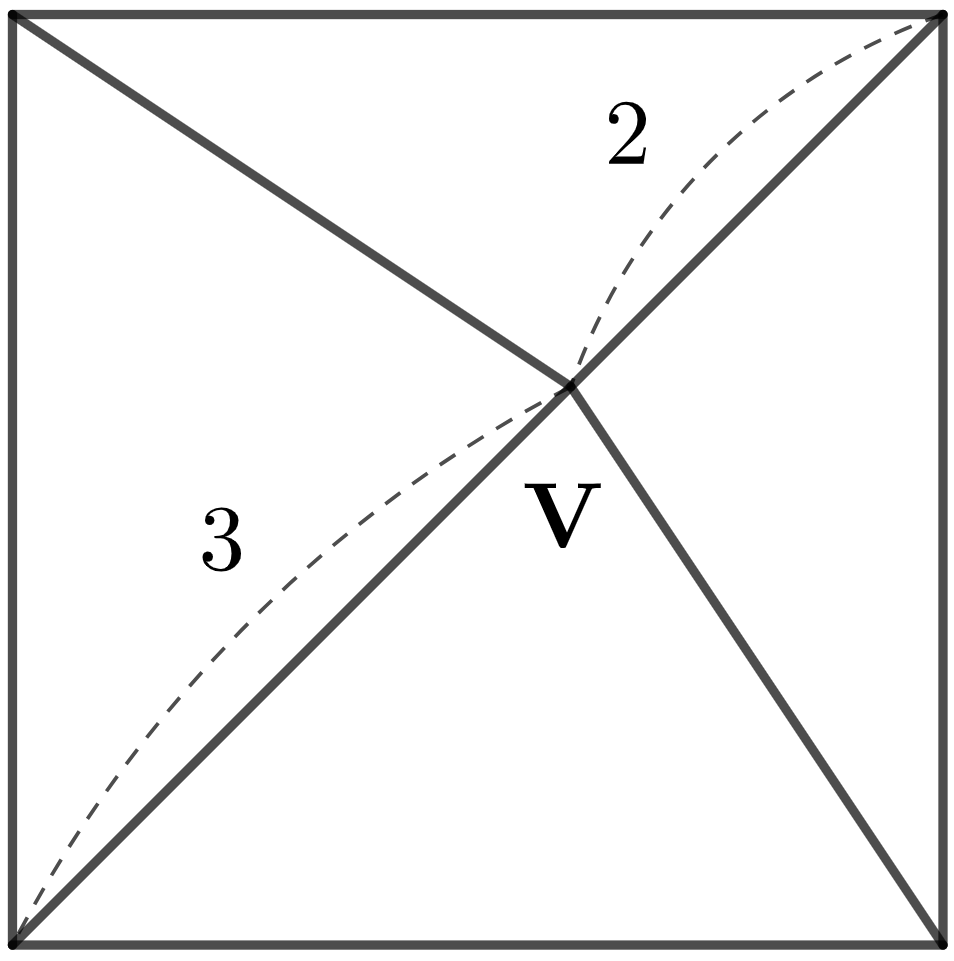}}}
  \caption{regular mesh, the vertex $\V$ divides the diagonal of positive slope
in the ratio $2:3$}
  \label{fig:mesh23}
\end{figure}

\begin{figure}[ht]
  \centering
  \subfloat[$\O$]
   {\includegraphics[width=0.32\textwidth]{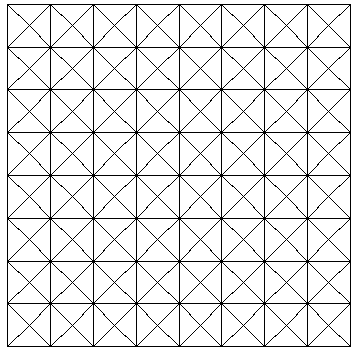}}\hspace{15mm}
  \subfloat[unit square in $\O$]{\raisebox{4ex}
  {\includegraphics[width=0.23\textwidth]{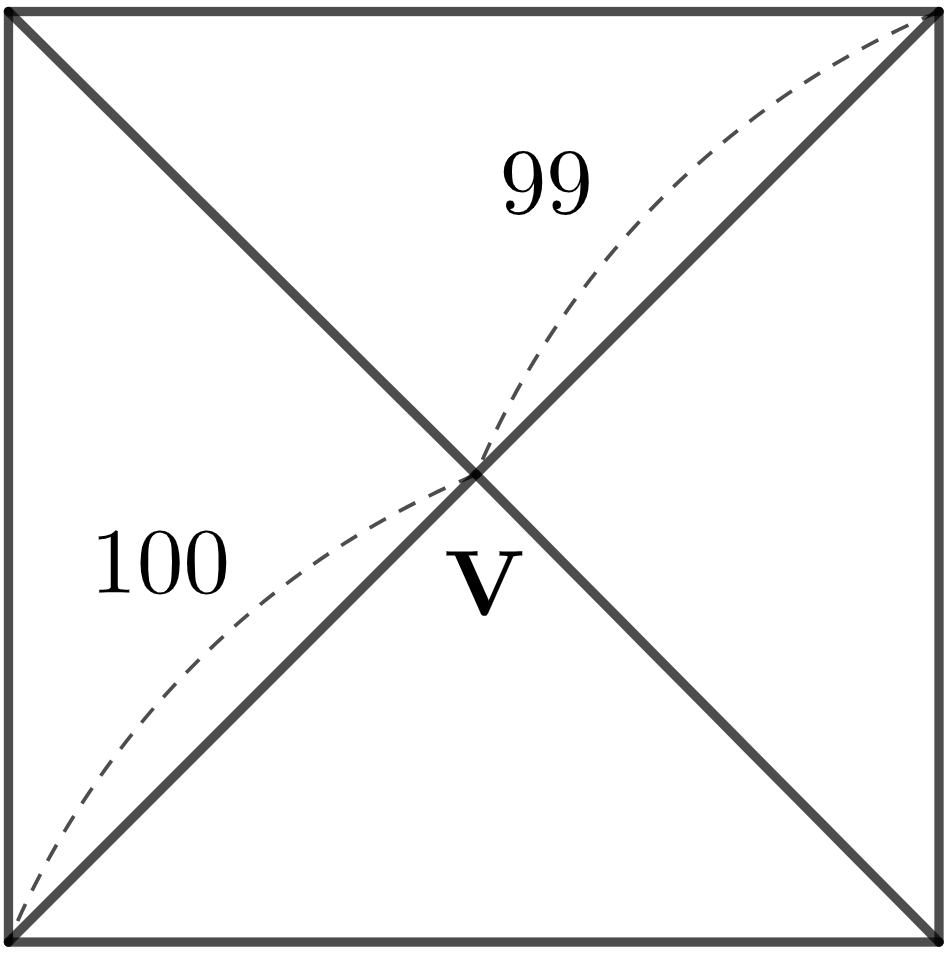}}}
  \caption{nearly singular mesh, the vertex $\V$ divides the diagonal of positive slope
in the ratio $99:100$}
  \label{fig:mesh99}
\end{figure}

\begin{table}
  \centering
\begin{tabular}{r || c c || c c || c c }
  \hline
  \rule{0pt}{10pt}
  mesh\hspace{3mm}
  & \hspace{1mm}$|\u-\u_h|_1$\hspace{1mm} & order\hspace{1mm}
  & \hspace{1mm}$|\u-\u_h^{\rm{FN}}|_1$\hspace{1mm} & order\hspace{1mm}
  & \hspace{1mm}$|\u-\u_h^{\rm{SV}}|_1$\hspace{1mm} & order \\[0.5ex] 
  \hline
  4 x 4 x 4 &1.4450E-2 & &  1.4450E-2  & &1.1706E-2 & \\
  8 x 8 x 4 & 8.5476E-4  & 4.08 & 8.5476E-4  & 4.08 &7.5823E-4 & 3.95 \\ 
  16 x 16 x 4 &5.1606E-5  &4.05 &5.1606E-5  &4.05 &4.7135E-5 & 4.01 \\
  32 x 32 x 4 &3.1882E-6 &4.02  &3.1882E-6  &4.02 &2.9271E-6 & 4.01 \\
\hline
\end{tabular}
\caption{the errors in velocity over regular meshes as in Figure \ref{fig:mesh23} }
\label{table-2:3v}
\end{table}

\begin{table}
  \centering
\begin{tabular}{r || c c || c c || c c }
  \hline
  \rule{0pt}{10pt}
  mesh\hspace{3mm}
  & \hspace{1mm}$|\u-\u_h|_1$\hspace{1mm} & order\hspace{1mm}
  & \hspace{1mm}$|\u-\u_h^{\rm{FN}}|_1$\hspace{1mm} & order\hspace{1mm}
  & \hspace{1mm}$|\u-\u_h^{\rm{SV}}|_1$\hspace{1mm} & order \\[0.5ex] 
  \hline
  4 x 4 x 4 &1.1266E-2 & & 1.1266E-2  & &8.5523E-3 & \\
  8 x 8 x 4 & 6.1513E-4  &4.19  &6.1513E-4  &4.19  &5.4485E-4 &3.97  \\ 
  16 x 16 x 4 &3.5952E-5  &4.10 &3.5952E-5  &4.10 &3.3934E-5 &4.01  \\
  32 x 32 x 4 &2.2009E-6 &4.03  &2.2009E-6  &4.03 &2.1180E-6 & 4.00 \\
\hline
\end{tabular}
\caption{the errors in velocity over nearly singular meshes as in Figure \ref{fig:mesh99}}
\label{table-99:100v}
\end{table}

\begin{table}
  \centering
\begin{tabular}{r || c c || c c || c c }
  \hline
  \rule{0pt}{10pt}
    mesh\hspace{3mm}
   & \hspace{1mm}$\|p-\ph\|_0$\hspace{1mm} & order\hspace{1mm}
  & \hspace{1mm}$\|p-\ph^{\rm{FN}}\|_0$\hspace{1mm} & order\hspace{1mm}
  & \hspace{1mm}$\|p-\ph^{\rm{SV}}\|_0$\hspace{1mm} & order \\[0.5ex] 
  \hline
  4 x 4 x 4 &6.1948E-2 & & 7.5405E-2  & &9.0916E-2 & \\
  8 x 8 x 4 &  3.1862E-3  &4.28  &3.5251E-3  &4.42  &5.3241E-3 &4.09  \\ 
  16 x 16 x 4 &1.9879E-4  &4.00 &2.1695E-4  &4.02 &3.2844E-4 & 4.02 \\
  32 x 32 x 4 &1.2413E-5 & 4.00 &1.3499E-5  &4.01 &2.0319E-5 & 4.01 \\
\hline
\end{tabular}
\caption{the errors in pressure over  regular meshes as in Figure \ref{fig:mesh23} }
\label{table-2:3p}
\end{table}

\begin{table}
  \centering
\begin{tabular}{r || c c || c c || c c }
  \hline
  \rule{0pt}{10pt}
  mesh\hspace{3mm}
  & \hspace{1mm}$\|p-\ph\|_0$\hspace{1mm} & order\hspace{1mm}
  & \hspace{1mm}$\|p-\ph^{\rm{FN}}\|_0$\hspace{1mm} & order\hspace{1mm}
  & \hspace{1mm}$\|p-\ph^{\rm{SV}}\|_0$\hspace{1mm} & order \\[0.5ex] 
  \hline
  4 x 4 x 4 &5.7969E-2 & & 7.6090E-2  & &1.1022E+0 & \\
  8 x 8 x 4 &2.7017E-3   &4.42  &3.3691E-3  & 4.50 &4.1561E-2 & 4.73 \\ 
  16 x 16 x 4 &1.6761E-4  &4.01 &2.0466E-4  &4.04 &1.3696E-3 & 4.92 \\
  32 x 32 x 4 &1.0455E-5 &4.00  &1.2636E-5  &4.02 &4.6032E-5 & 4.89 \\
\hline
\end{tabular}
\caption{the errors in pressure over nearly singular meshes as in Figure \ref{fig:mesh99}}
\label{table-99:100p}
\end{table}

\begin{figure}
  \hspace{3mm}
  \subfloat[$p=\sin(4\pi x)e^{\pi y}$]{\includegraphics[width=0.45\textwidth]{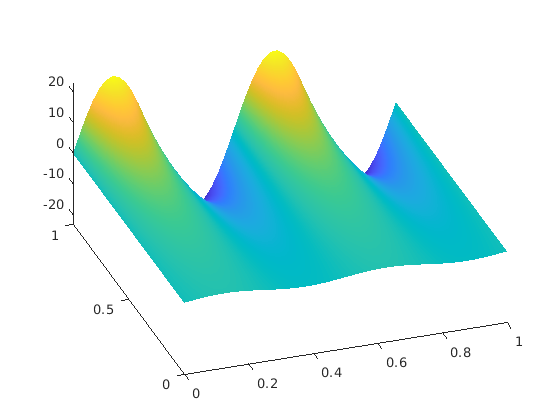}}
 \subfloat[$p_h$ by local computation]{\includegraphics[width=0.45\textwidth]{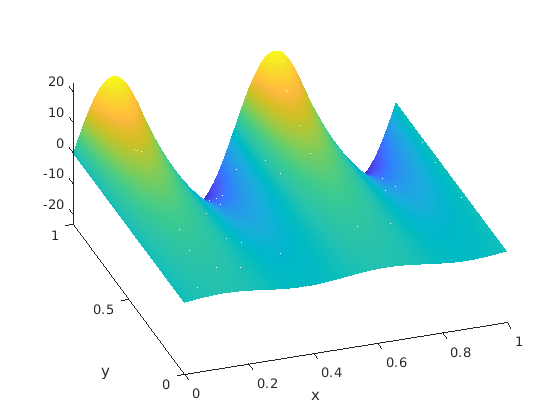}}
 
 \hspace{3mm}
 \subfloat[$\ph^{\rm{FN}}$ by Falk-Neilan]{\includegraphics[width=0.45\textwidth]{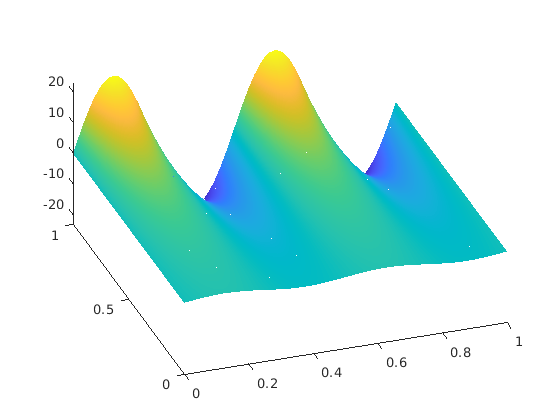}}
  \subfloat[$\ph^{\rm{SV}}$ by Scott-Vogelius]{\includegraphics[width=0.45\textwidth]{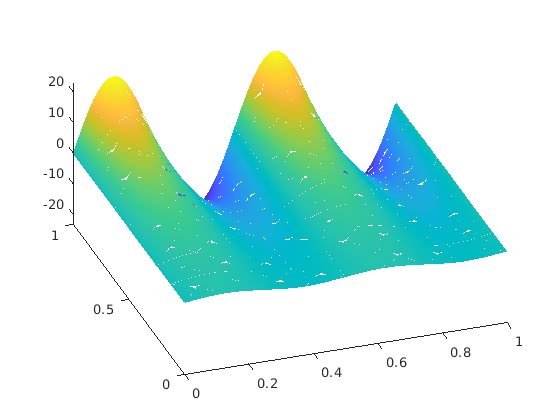}}
  \caption{the pressures calculated over the  $8\times8\times4$ nearly singular mesh in Figure \ref{fig:mesh99}}
  \label{fig:3press}
\end{figure}

\begin{figure}
  \hspace{3mm}
  \subfloat[non-sting component $\ph^\NS$]
  {\includegraphics[width=0.45\textwidth]{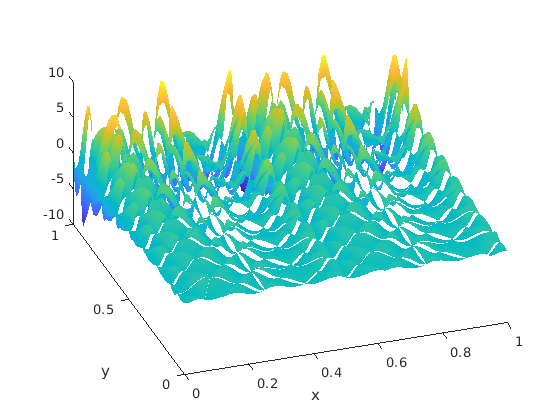}}
   \subfloat[adding piecewise constant component $\ph^\CC$ to (a)]
   {\includegraphics[width=0.45\textwidth]{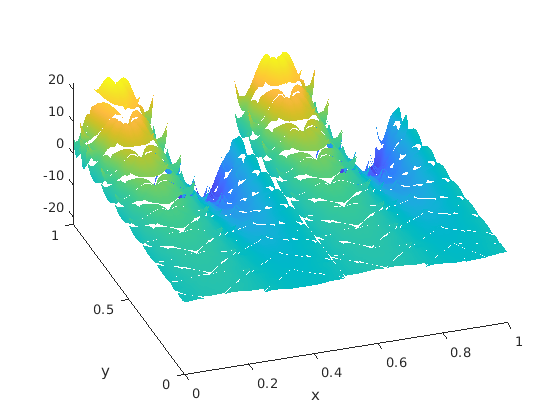}}
   
   \hspace{3mm}
   \subfloat[adding $\ph^{\V}$ for regular vertices $\V$ to (b)]
  {\includegraphics[width=0.45\textwidth]{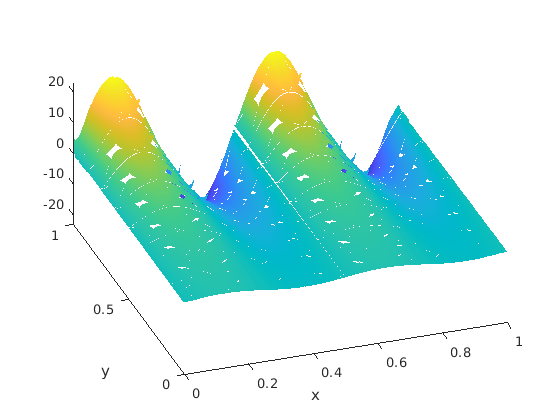}}
  \subfloat[adding $\ph^{\V}$ for interior singular vertices $\V$ to (c)] 
  {\includegraphics[width=0.45\textwidth]{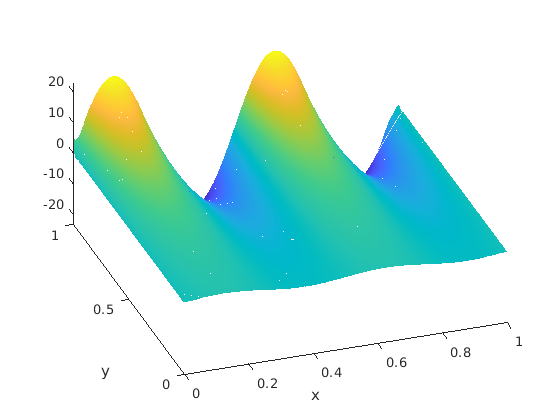}}
  
  \centering
  \hspace{3mm}
  \subfloat[$\ph=$(d)$+(\ph^{\V}$ for dead corners $\V)-\m(\ph^\ST)$]  
  {\includegraphics[width=0.45\textwidth]{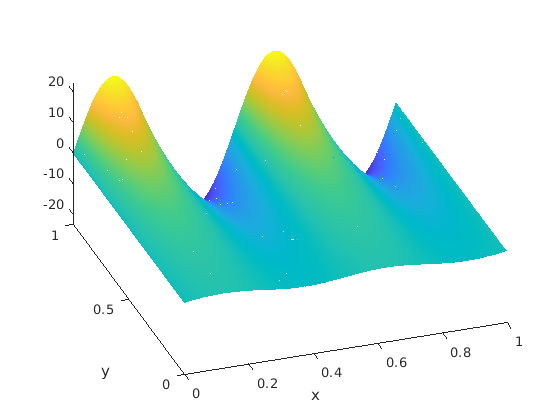}}
  \caption{superposition of the 5 components on making $\ph$
   over the mesh in Figure \ref{fig:mesh}
  }
  \label{fig:ph}
\end{figure}

 \section*{Acknowledgments}
 This work was supported by the National Research Foundation of Korea(NRF) grant
 funded by the Korea government(MSIT) (No. 2021R1F1A1055040)

 \section*{Data availability statement}
 The data that support the findings of this study are available from the corresponding author upon reasonable request.

\end{document}